\newtheoremstyle{mystyle}{}{}{}{}{\bf}{}{\newline}{}
\theoremstyle{mystyle}
\numberwithin{equation}{section}
\theoremstyle{plain}
\newtheorem{thm}{Theorem}[section]
\newtheorem{cor}[thm]{Corollary}
\newtheorem{dfn}[thm]{Definition}
\newtheorem{lem}[thm]{Lemma}
\newtheorem{pps}[thm]{Proposition}
\theoremstyle{mystyle}
\newtheorem{rmk}[thm]{Remark}
\theoremstyle{mystyle}
\newenvironment{exm}
  {\pushQED{\qed}\renewcommand{\qedsymbol}{$\triangle$}\examplex}
  {\popQED\endexamplex}
\declaretheoremstyle[
  spaceabove=-6pt,
  spacebelow=6pt,
  headfont=\normalfont\bfseries,
  postheadspace=1em,
  qed=\qedsymbol,
  headpunct={}
]{mystyle} 
\declaretheorem[name={Proof},style=mystyle,unnumbered,
]{Proof}
\renewenvironment{proof}[1][\proofname] {\par\pushQED{\qed}\normalfont\topsep6\p@\@plus6\p@\relax\trivlist\item[\hskip\labelsep\bfseries#1\@addpunct{.}]\ignorespaces}{\popQED\endtrivlist\@endpefalse}
\newcommand{\Z}{\mathds{Z}}
\newcommand{\C}{\mathds{C}}
\newcommand{\Q}{\mathds{Q}}
\newcommand{\F}{\mathds{F}}
\newcommand{\N}{\mathds{N}}
\newcommand{\G}{\mathbf{G}}
\newcommand{\GLam}{\mathbf{G}_{\Lambda}}
\newcommand{\m}{\mathbf{m}}
\newcommand{\p}{\mathfrak{p}}
\newcommand{\ri}{\mathcal{O}}
\newcommand{\lri}{\mathfrak{o}}
\newcommand{\g}{\mathfrak{g}}
\newcommand{\lrip}{\lri/\p^N}
\newcommand{\ric}{\ri_{\p}}
\newcommand{\W}{W^{\lri}}
\newcommand{\z}{\mathfrak{z}}
\newcommand{\pint}{\mathscr{Z}}
\newcommand{\cczeta}{\zeta^\textup{cc}}
\newcommand{\rzeta}{\zeta^\textup{irr}}
\newcommand{\rtzeta}{\zeta^{\widetilde{\textup{irr}}}}
\newcommand{\kzeta}{\zeta^\textup{k}}
\newcommand{\rg}{\widetilde{r}}
\newcommand{\lvlzf}{\mathcal{Z}}
\newcommand{\rlvlzf}{\mathcal{Z}^\textup{irr}}
\newcommand{\clvlzf}{\mathcal{Z}^\textup{cc}}
\newcommand{\astlvlzf}{\lvlzf^{\ast}}
\newcommand{\astzeta}{\zeta^{\ast}}
\newcommand{\rk}{\textup{rk}}
\newcommand{\vry}{\nu(\mathcal{R}(\mathbf{y}))}
\newcommand{\No}{\mathcal{N}_{B,\m}(\lrip)}
\newcommand{\ua}{u_{A_{\Lambda}}}
\newcommand{\ub}{u_{B_{\Lambda}}}
\newcommand{\komplement}{\textup{c}}
\newcommand{\cc}{{c}}
\newcommand{\irrup}{\textup{irr}}
\newcommand{\ccup}{\textup{cc}}
\newcommand{\specp}{\textup{Spec}(\ri)\setminus\{(0)\}}
\newcommand{\Frac}{\textup{Frac}}
\newcommand{\commF}{A_{\mathcal{F}_{n,\delta}}(\underline{X})}
\newcommand{\commG}{A_{\mathcal{G}_{n}}(\underline{X})}
\newcommand{\commH}{A_{\mathcal{H}_{n}}(\underline{X})}
\newcommand{\commGen}{A_{\Lambda}(\underline{X})}
\newcommand{\commA}{A_{\Lambda}}
\newcommand{\commFm}{A_{\mathcal{F}_{n,\delta}}^{(m)}(\underline{X})}
\newcommand{\commGm}{A_{\mathcal{G}_{n}}^{(m)}(\underline{X})}
\newcommand{\commHm}{A_{\mathcal{H}_{n}}^{(m)}(\underline{X})}
\newcommand{\commGenm}{A_{\Lambda}^{(m)}(\underline{X})}
\newcommand{\Fnd}{\mathcal{F}_{n,\delta}}
\newcommand{\Gn}{\mathcal{G}_{n}}
\newcommand{\Hn}{\mathcal{H}_{n}}
\newcommand{\DL}{\textup{D}_{\Lambda}}
\newcommand{\wL}{\omega_{\Lambda}}
\newcommand{\vLm}{\nu^{\Lambda}_m}
\newcommand{\vm}{\nu_{m}}
\newcommand{\CLm}{\mathfrak{C}^{\Lambda}_{m}}
\newcommand{\kLm}{k^{\Lambda}_{m}}
\newcommand{\kFm}{k^{\Fnd}_{m}}
\newcommand{\kGm}{k^{\Gn}_{m}}
\newcommand{\kHm}{k^{\Hn}_{m}}
\newcommand{\diag}{\textup{Diag}}
\newcommand{\col}{\mathcal{C}}
\newcommand{\coli}{\mathcal{C}^{i}}
\newcommand{\colj}{\mathcal{C}^{j}}
\newcommand{\row}{\mathcal{R}}
\newcommand{\thickhline}{
    \noalign {\ifnum 0=`}\fi \hrule height 1pt
    \futurelet \reserved@a \@xhline
}
\newcolumntype{"}{@{\hskip\tabcolsep\vrule width 1pt\hskip\tabcolsep}}
\newcommand\coolover[2]{\mathrlap{\smash{\overbrace{\phantom{
    \begin{matrix} #2 \end{matrix}}}^{\mbox{$#1$}}}}#2}
\newcommand\coolleftbrace[2]{
#1\left\{\vphantom{\begin{matrix} #2 \end{matrix}}\right.}
\renewcommand*\env@matrix[1][*\c@MaxMatrixCols c]{
  \hskip -\arraycolsep
  \let\@ifnextchar\new@ifnextchar
  \array{#1}}
\newcommand\Tstrut{\rule{0pt}{2.6ex}}         
\newcommand\Bstrut{\rule[-0.9ex]{0pt}{0pt}}  
\DeclareMathOperator{\Mat}{Mat}
\DeclareMathOperator{\class}{k}
\DeclareMathOperator{\des}{des}
\DeclareMathOperator{\rmaj}{rmaj}
\DeclareMathOperator{\maj}{maj}
\DeclareMathOperator{\invb}{inv}
\DeclareMathOperator{\negb}{neg}
\DeclareMathOperator{\nspb}{nsp}
\renewcommand{\@secnumfont}{\bfseries}
\patchcmd{\section}{\scshape}{\bf}{}{}
\patchcmd{\subsubsection}{\scshape}{\bf}{}{}
\title[paper]{Bivariate representation and conjugacy class zeta functions associated to unipotent group schemes, II: Groups of type $F$, $G$, and $H$ \vspace{-3ex}}
\author{Paula Macedo Lins de Araujo}
\address{Fakult\"at f\"ur Mathematik, Universit\"at Bielefeld, Germany}
\curraddr{Wiskunde, KU Leuven, Campus Kulak Kortrijk, Etienne Sabbelaan 53 bus 7657, 8500 Kortrijk, Belgium}
\email{paula.macedolinsdearaujo@kuleuven.be}
\date{\today}
\subjclass[2010]{11M32, 20D15, 22E55, 20E45, 05A15, 05E15.}
\keywords{Finitely generated nilpotent groups, zeta functions, conjugacy classes, irreducible complex characters, $p$-adic integration, signed permutation statistics.}
\begin{document}

\clearpage
\pagenumbering{arabic} 
\begin{abstract}
This is the second of two papers introducing and investigating two bivariate zeta functions associated to unipotent group schemes over rings of integers of number fields.
In the first part, we proved some of their properties such as rationality and functional equations.
Here, we calculate such bivariate zeta functions of three infinite families of nilpotent groups of class $2$ generalising the Heisenberg group of $3\times 3$-unitriangular matrices 
over rings of integers of number fields.
The local factors of these zeta functions are also expressed in terms of sums over finite hyperoctahedral groups, which provides formulae for joint distributions of 
three statistics on such groups. 
\end{abstract}
\maketitle \vspace{-1.0cm}
%\tableofcontents
\thispagestyle{empty}
%\vspace{-1.5cm}
%\newpage
\section{Introduction and statement of main results}\label{intro}
In the first part~\cite{PL18} of this work, we introduced bivariate zeta functions of groups associated to unipotent group schemes over rings of integers of numbers fields. In this second part, we provide explicit examples of such zeta functions for infinite families of nilpotent groups of class $2$, and use these expressions to provide formulae for joint distributions of three statistics on hyperoctahedral groups. 

We are interested in understanding the following data of a group $G$.
\begin{align*}
  r_n(G)&=|\{\text{isomorphism classes of $n$-dimensional irreducible complex}\\
  &\phantom{r(G)}\text{representations of } G\}|,\\
  c_n(G)&=|\{\text{conjugacy classes of $G$ of cardinality }n\}|.
\end{align*}
If all these numbers are finite---for instance, if $G$ is a finite group---we define the following zeta functions.
\begin{dfn}\label{univa} The \emph{representation} and the \emph{conjugacy class zeta functions} of the group $G$ are, respectively, 
\begin{align*}
  \rzeta_{G}(s)=\sum_{n=1}^{\infty}r_n(G)n^{-s} \vspace{0.7cm} \text{ and }\hspace{0.5cm}
  \cczeta_{G}(s)=\sum_{n=1}^{\infty}\cc_n(G)n^{-s},
\end{align*}
where $s$ is a complex variable. 
\end{dfn}

Let $K$ denote a number field and $\ri$ its ring of integers. 
Let $\G$ be a unipotent group scheme over $\ri$. The group $\G(\ri)$ is a finitely generated, torsion-free nilpotent group ($\mathcal{T}$-group for short)---
see~\cite[Section~2.1.1]{StVo14}---whereas, for a nonzero ideal $I$ of $\ri$, the group $\G(\ri/I)$ is a finite group.
For $\mathcal{T}$-groups, the numbers $r_n(G)$ and $c_n(G)$ are not all finite. 
We thus define bivariate zeta functions which count such data for the principal congruence quotients~$\G(\ri/I)$.

\begin{dfn}\label{biva} The \emph{bivariate representation} and the \emph{bivariate conjugacy class zeta functions} of $\G(\ri)$ are, respectively,
\begin{align*} 
\rlvlzf_{\G(\ri)}(s_1,s_2)&=\sum_{(0) \neq I \unlhd \ri }\rzeta_{\G(\ri/I)}(s_1)|\ri:I|^{-s_2}\text{ and}\\
\clvlzf_{\G(\ri)}(s_1,s_2)&=\sum_{(0) \neq I \unlhd \ri }\cczeta_{\G(\ri/I)}(s_1)|\ri:I|^{-s_2},
\end{align*}
where $s_1$ and $s_2$ are complex variables. 
\end{dfn}
These series converge for $s_1$ and $s_2$ with sufficiently large real parts---cf. \cite[Proposition~2.3]{PL18}---and satisfy the following Euler decompositions:
\begin{equation}\label{Euler}\astlvlzf_{\G(\ri)}(s_1,s_2)=\prod_{\p \in \specp} \astlvlzf_{\G(\ric)}(s_1,s_2),\end{equation}
where $\ast\in\{\irrup,\ccup\}$ and the completion of $\ri$ at the nonzero prime ideal~$\p$ is denoted by~$\ric$. When considering a fixed prime ideal $\p$, we write simply $\ric=\lri$
and $\G_N:=\G(\lrip)$. With this notation, the local factor at $\p$ is given by 
\begin{align}\label{localfactors}\astlvlzf_{\G(\ric)}(s_1,s_2)=\astlvlzf_{\G(\lri)}(s_1,s_2)= \sum_{N=0}^{\infty}\astzeta_{\G_N}(s_1)|\lri:\p|^{-Ns_2}.\end{align}

In this part of the work, we calculate explicitly such local bivariate zeta functions of three infinite families of nilpotent groups of class $2$.
Such groups are constructed from the following class-$2$-nilpotent $\Z$-Lie lattices, that is, free and finitely generated $\ri$-modules together with an anti-symmetric bi-additive 
form $[~,~]$ which satisfies the Jacobi identity.
\begin{dfn}\label{fgh} For $n\in\N$ and $\delta\in\{0,1\}$, consider the nilpotent $\Z$-Lie lattices 
\begin{align*} \Fnd= \langle x_k,y_{ij} \mid ~&  [x_i,x_j]-y_{ij}, 1 \leq k \leq 2n+\delta, 1\leq i <j \leq 2n+\delta\rangle, \\
\Gn= \langle x_k,y_{ij}  \mid ~& [x_i,x_{n+j}]-y_{ij}, 1 \leq k \leq 2n, 1\leq i ,j \leq n \rangle,\\
\Hn= \langle x_k,y_{ij} \mid ~&  [x_i,x_{n+j}]-y_{ij},[x_j,x_{n+i}]-y_{ij},~1 \leq k \leq 2n, 1\leq i\leq j \leq n\rangle.
\end{align*}
\end{dfn}
By convention, relations that do not follow from the given ones are trivial.

Let $\Lambda$ be one of the $\Z$-Lie lattices of Definition~\ref{fgh}.
We consider the unipotent group scheme $\GLam$ associated to $\Lambda$ obtained by the construction of~\cite[Section~2.4]{StVo14}.
Following~\cite{StVo14}, these unipotent group schemes are denoted by $F_{n,\delta}$, $G_n$, and $H_n$, and groups of the form $F_{n,\delta}(\ri)$, $G_n(\ri)$, and $H_n(\ri)$ are called groups of type $F$, $G$, and $H$, respectively.

The unipotent group schemes $F_{n,\delta}$, $G_n$, and $H_n$ provide different generalisations of the Heisenberg group scheme $\mathbf{H}=F_{1,0}=G_1=H_1$, where $\mathbf{H}(\ri)$ 
is the Heisenberg group of upper uni-triangular $3\times 3$-matrices over~$\ri$. 
The interest in such $\Z$-Lie lattices arises from their very construction. Roughly speaking, their defining relations reflect the reduced, irreducible, prehomogeneous vector spaces of, respectively, complex $n\times n$ antisymmetric matrices, complex $n\times n$-matrices and complex $n\times n$ symmetric matrices---here, 
the relative invariants are given by $\textup{Pf}$, $\det$ and $\det$, respectively, where $\textup{Pf}(X)$ denotes the Pfaffian of an antisymmetric matrix $X$. 
We refer the reader to~\cite[Section~6]{StVo14} for details.

For the rest of this paper, $\Lambda$ is one of the $\Z$-Lie lattices of Definition~\ref{fgh} and $\G=\GLam$ denotes the unipotent group schemes associated to $\Lambda$.
\subsection{Bivariate conjugacy class and class number zeta functions}
Our first result concerns bivariate conjugacy class zeta functions, which leads to similar results for (univariate) class number zeta functions.
\begin{thm}\label{cclvl} Let $n \in \N$, and $\delta \in \{0,1\}$. Then, for each nonzero prime ideal~$\p$ of~$\ri$ with $q=|\ri:\p|$, 
\[\clvlzf_{F_{n,\delta}(\lri)}(s_1,s_2)= \frac{1-q^{\binom{2n+\delta-1}{2}-(2n+\delta-1)s_1-s_2}}{(1-q^{\binom{2n+\delta}{2}-s_2})
(1-q^{\binom{2n+\delta}{2}+1-(2n+\delta-1)s_1-s_2})}.\]
Write $q^{-s_1}=T_1$ and $q^{-s_2}=T_2$. For $n\geq 2$, 
\begin{align*}&\clvlzf_{G_n(\lri)}(s_1,s_2)=\\
&\frac{(1-q^{2\binom{n}{2}}T_{1}^{n}T_2)(1-q^{2\binom{n}{2}+1}T_{1}^{2n-1}T_2)+q^{n^2}T_{1}^{n}T_2(1-q^{-n})(1-q^{-(n-1)}T_{1}^{n-1})}
{(1-q^{n^2}T_2)(1-q^{n^2}T_{1}^{n}T_2)(1-q^{n^2+1}T_{1}^{2n-1}T_2)},\\
&\clvlzf_{H_n(\lri)}(s_1,s_2)=\\
&\frac{(1-q^{\binom{n}{2}}T_{1}^{n}T_2)(1-q^{\binom{n}{2}+2}T_{1}^{2n-1}T_2)+q^{\binom{n+1}{ 2}}T_{1}^{n}T_2(1-q^{-n+1})(1-q^{-(n-1)}T_{1}^{n-1})}
{(1-q^{\binom{n+1}{2}}T_2)(1-q^{\binom{n+1}{2}+1}T_{1}^{n}T_2)(1-q^{\binom{n+1}{2}+1}T_{1}^{2n-1}T_2)}.\\
\end{align*}
\end{thm}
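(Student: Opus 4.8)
The plan is to reduce each local factor to a finite sum over the first layer of the associated $\Z$-Lie lattice, weighted by centraliser indices, and then to evaluate the resulting multiple geometric series.

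\textbf{Reduction.} Fix $\p$, put $q=|\lri:\p|$ and $\G=\GLam$, and decompose $\Lambda=\lri^{m}\oplus\lri^{d}$ with the second summand equal to $[\Lambda,\Lambda]$; thus $m=2n+\delta$ resp.\ $2n$, and $d=\binom{2n+\delta}{2}$, $n^{2}$, $\binom{n+1}{2}$ for $\Fnd$, $\Gn$, $\Hn$. Identifying $\G_{N}=\G(\lrip)$ with $(\lrip)^{m}\times(\lrip)^{d}$ via first- and second-layer coordinates, the group commutator depends only on first-layer parts and is given by $(\mathbf{a},\mathbf{a}')\mapsto\mathcal{B}(\mathbf{a})\mathbf{a}'$ for a matrix $\mathcal{B}(\mathbf{a})\in\Mat_{d\times m}(\lri)$ of $\lri$-linear forms in $\mathbf{a}$ (the commutator matrix of $\Lambda$), and for each family the commutator is onto $[\Lambda,\Lambda]$, so $[\G_{N},\G_{N}]\cong(\lrip)^{d}$. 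Since $\G_{N}$ has class $2$, an element with first-layer part $\mathbf{a}$ has centraliser of order $q^{Nd}\lvert\Ker(\mathcal{B}(\mathbf{a})\colon(\lrip)^{m}\to(\lrip)^{d})\rvert$; substituting this into $\cczeta_{\G_{N}}(s_{1})=|\G_{N}|^{-s_{1}-1}\sum_{g\in\G_{N}}|C_{\G_{N}}(g)|^{s_{1}+1}$ and summing out the second layer yields
\[
\clvlzf_{\GLam(\lri)}(s_{1},s_{2})=\sum_{N\ge0}q^{Nd-N(m+d)(s_{1}+1)-Ns_{2}}\sum_{\mathbf{a}\in(\lrip)^{m}}\bigl(q^{Nd}\lvert\Ker\mathcal{B}(\mathbf{a})\rvert\bigr)^{s_{1}+1}.
\]
This is the specialisation to conjugacy classes of the integral presentation of~\cite{PL18}, and reduces everything to the elementary divisor type of $\mathcal{B}(\mathbf{a})$ as $\mathbf{a}$ runs over $\lri^{m}$.

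\textbf{Elementary divisors of $\mathcal{B}(\mathbf{a})$.} The structural input I would establish is the following. For $\Fnd$, $\mathcal{B}(\mathbf{a})$ represents $\mathbf{b}\mapsto(a_{i}b_{j}-a_{j}b_{i})_{i<j}$ and has elementary divisors $2n+\delta-1$ copies of $\p^{v}$, where $v=\ord_{\p}\mathbf{a}=\min_{k}\ord_{\p}a_{k}$. For $\Gn$, writing $\mathbf{a}=(\mathbf{a}',\mathbf{a}'')\in\lri^{n}\times\lri^{n}$ and $u=\ord_{\p}\mathbf{a}'$, $w=\ord_{\p}\mathbf{a}''$, the bracket is $(\mathbf{b}',\mathbf{b}'')\mapsto\mathbf{a}'(\mathbf{b}'')^{\top}-\mathbf{b}'(\mathbf{a}'')^{\top}$ and $\mathcal{B}(\mathbf{a})$ has elementary divisors $n$ copies of $\p^{\min(u,w)}$ and $n-1$ copies of $\p^{\max(u,w)}$. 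For $\Hn$, the bracket is the symmetrisation of $\mathbf{a}'(\mathbf{b}'')^{\top}-\mathbf{b}'(\mathbf{a}'')^{\top}$, and $\mathcal{B}(\mathbf{a})$ has elementary divisors $n$ copies of $\p^{e_{1}}$ and $n-1$ copies of $\p^{e_{2}}$, where $\p^{e_{1}}\mid\p^{e_{2}}$ are the elementary divisors of the $n\times2$ matrix $(\mathbf{a}'\mid\mathbf{a}'')$. Each is checked by a unimodular change of first-layer basis adapted to $\mathbf{a}$ (resp.\ to $\mathbf{a}'$, resp.\ to the pair $(\mathbf{a}',\mathbf{a}'')$) and reading off a Smith form; the feature distinguishing $\Hn$ from $\Gn$ is that the symmetric shape of the $\Hn$-bracket makes $\rk\mathcal{B}(\mathbf{a})$ drop from $2n-1$ to $n$ along the locus where $\mathbf{a}'$ and $\mathbf{a}''$ are $\lri$-proportional, which forces the finer invariant $(e_{1},e_{2})$ rather than $(\ord_{\p}\mathbf{a}',\ord_{\p}\mathbf{a}'')$.

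\textbf{Evaluation.} In each case $\lvert\Ker\mathcal{B}(\mathbf{a})\rvert$ depends on $\mathbf{a}$ only through those valuations—$q^{N}q^{(m-1)v}$ for $\Fnd$, $q^{N}q^{n\min(u,w)+(n-1)\max(u,w)}$ for $\Gn$, $q^{N}q^{ne_{1}+(n-1)e_{2}}$ for $\Hn$—so the inner sum becomes a weighted sum over these invariants. For $\Fnd$ one groups $\mathbf{a}$ by $v$ (there are $(q^{m}-1)q^{(N-v-1)m}$ choices for $v<N$ and one for $v=N$), leaving a central term plus a double geometric series in $N$ and $v$ that collapses, over a common denominator, to the stated fraction. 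For $\Gn$ one groups the two halves by $(u,w)$, splits along $u\le w$ and $u>w$, sums the two-variable geometric series and then over $N$, and reduces. For $\Hn$ one parametrises $\mathbf{a}$ by first choosing $\mathbf{a}'$ with $\ord_{\p}\mathbf{a}'=u$ and then, in an $\lri$-basis adapted to $\mathbf{a}'$, writing $\mathbf{a}''$ through its coordinate along $\mathbf{a}'$ (of order $w_{1}$) and its remaining $(n-1)$ coordinates (of joint order $w_{\perp}$), so that $e_{1}=\min(u,w_{1},w_{\perp})$ and $e_{1}+e_{2}=u+w_{\perp}$; the count of the transverse part carries a factor $q^{n-1}-1$, which is precisely what produces the $(1-q^{-(n-1)})$ in the numerator. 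Summing the resulting quadruple geometric series over $N,u,w_{1},w_{\perp}$ and simplifying yields $\clvlzf_{\Hn(\lri)}$.

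\textbf{Main obstacle.} The delicate steps are the determination of the elementary divisor type of $\mathcal{B}(\mathbf{a})$ for $\Hn$—isolating the invariant $(e_{1},e_{2})$ and handling the rank drop on the proportionality locus—and the bookkeeping of the final multiple geometric series, above all verifying that the $\Gn$- and $\Hn$-sums collapse to the compact numerators in the statement rather than to a larger, unreduced rational function. The same local factors can alternatively be written as sums over the finite hyperoctahedral groups, which both suggests the right substitutions and gives an independent check, but the direct computation above is self-contained.
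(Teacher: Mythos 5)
Your proposal is correct in outline and its structural claims check out, but it reaches the result by a route that repackages the paper's key steps rather than reproducing them. The paper works from the $\p$-adic integral of Proposition~\ref{intpadic}, whose integrand is the product of ratios $\|F_k(\commA(\underline{x}))\cup wF_{k-1}(\commA(\underline{x}))\|_{\p}/\|F_{k-1}(\commA(\underline{x}))\|_{\p}$; determining these ratios amounts to determining the elementary divisor type of $\commA(\mathbf{x})$, since $\|F_k\|_{\p}/\|F_{k-1}\|_{\p}=q^{-e_k}$ for the $k$th elementary divisor $e_k$. The paper does this by constructing explicit square submatrices of $\commA$ realising prescribed monomials, resp.\ powers of the $2\times2$ minors $M_{ij}$, as minors (Lemma~\ref{simpleF}, Proposition~\ref{claim}, Lemmas~\ref{smallnh}--\ref{lemcommH} and the subsequent proposition), together with Lemma~\ref{Lemma:Minors} to control where the minimal minor sits; it then evaluates the integrals via Propositions~\ref{integral1}, \ref{integral2} and~\ref{integral3}. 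You instead compute the Smith normal form of the commutator matrix directly from a basis adapted to $\mathbf{a}$, obtaining $m-1$ copies of $\p^{v}$ for $\Fnd$, $n$ copies of $\p^{\min(u,w)}$ plus $n-1$ copies of $\p^{\max(u,w)}$ for $\Gn$, and $n$ copies of $\p^{e_1}$ plus $n-1$ copies of $\p^{e_2}$ for $\Hn$ with $(e_1,e_2)$ the elementary divisors of $(\mathbf{a}'\mid\mathbf{a}'')$ --- all of which I have checked and which agree with what the paper's minor computations encode; your parametrisation of $\mathbf{a}''$ by $(w_1,w_\perp)$ plays exactly the role of Lemma~\ref{Lemma:Minors}. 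What your route buys is a considerably shorter and more transparent treatment of the hardest structural step (the submatrix constructions for $G_n$ and especially $H_n$ occupy most of Section~\ref{pcclvl}); what it gives up is the uniform integral formalism that the paper reuses for the representation side and for functional equations. The one place where your write-up is genuinely only a sketch is the final evaluation: the collapse of the multiple geometric series to the stated compact numerators is asserted rather than carried out, and this is precisely the bookkeeping that the paper's auxiliary integral lemmas perform; it is routine but must actually be done, in particular the $\Gn$ split along $u\le w$ versus $u>w$ and the $\Hn$ quadruple sum, to confirm the numerators $(1-q^{-n})(1-q^{-(n-1)}T_1^{n-1})$ and $(1-q^{-n+1})(1-q^{-(n-1)}T_1^{n-1})$.
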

The proof of Theorem~\ref{cclvl} is given in Section~\ref{pcclvl}.

Let $\class(G)$ denote the \emph{class number} of the finite group $G$, that is, the number of conjugacy classes or, equivalently, the number of irreducible complex 
characters of~$G$. In particular, wee see that $\rzeta_G(0)=\cczeta_G(0)=\class(G)$.
We recall from~\cite[Section~1.2]{PL18} that, for a unipotent group scheme $\G$, one can obtain the (univariate) class number zeta function $\kzeta_{\G(\ri)}(s)$ via the following 
specialisation:
\begin{equation}\label{specialisation}\rlvlzf_{\G(\ri)}(0,s)=\clvlzf_{\G(\ri)}(0,s)=\sum_{(0) \neq I \unlhd \ri}\class(\G(\ri/I))|\ri:I|^{-s}=:\kzeta_{\G(\ri)}(s),\end{equation}
where $s$ is a complex variable. We remark that the term `conjugacy class zeta function' is sometimes used for what we call `class number zeta function'; see for instance~\cite{BDOP,ro17,duSau05}.
Specialisation~\eqref{specialisation} applied to Theorem~\ref{cclvl} gives the following.

\begin{cor}\label{kzf} For all $n \in \N$ and $\delta \in \{0,1\}$,
\begin{equation}
\label{kzetaF}\kzeta_{F_{n,\delta}(\ri)}(s)=\frac{\zeta_K(s-\binom{2n+\delta}{2}-1)\zeta_{K}(s-\binom{2n+\delta}{2})}{\zeta_K(s-\binom{2n+\delta-1}{2})},
\end{equation}
where $\zeta_K(s)$ is the Dedekind zeta function of the number field $K=\Frac(\ri)$.
Furthermore, for $n\geq 2$, the class number zeta functions of $G_n(\ri)$ and $H_n(\ri)$ are
  \begin{align*}
    \kzeta_{G_n(\ri)}(s)&=\prod_{\p \in \specp}\hspace{-0.2cm}
    \frac{(1-q_{\p}^{2\binom{n}{2}-s})(1-q_{\p}^{2\binom{n}{2}+1-s})+q_{\p}^{n^2-s}(1-q_{\p}^{-n})(1-q_{\p}^{-n+1})}{(1-q_{\p}^{n^2-s})^2(1-q_{\p}^{n^2+1-s})},&\\
    \kzeta_{H_n(\ri)}(s)&=\prod_{\p \in \specp}\hspace{-0.2cm}
    \frac{(1-q_{\p}^{\binom{n}{2}-s})(1-q_{\p}^{\binom{n}{2}+2-s})+q_{\p}^{\binom{n+1}{2}-s}(1-q_{\p}^{-n+1})^2}{(1-q_{\p}^{\binom{n+1}{2}-s})(1-q_{\p}^{\binom{n+1}{2}+1-s})^2},&
  \end{align*}
where $q_{\p}=|\ri:\p|$, for each $\p \in \specp$.
\end{cor}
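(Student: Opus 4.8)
The plan is to derive Corollary~\ref{kzf} directly from Theorem~\ref{cclvl} by applying the specialisation~\eqref{specialisation}. Concretely, for each $\Lambda\in\{F_{n,\delta},G_n,H_n\}$, we use~\eqref{specialisation} together with the Euler product~\eqref{Euler} and the description of local factors~\eqref{localfactors} to write
\[
\kzeta_{\GLam(\ri)}(s)=\clvlzf_{\GLam(\ri)}(0,s)=\prod_{\p\in\specp}\clvlzf_{\GLam(\lri)}(0,s),
\]
and then substitute $s_1=0$, i.e. $T_1=q_\p^{0}=1$, and $s_2=s$, i.e. $T_2=q_\p^{-s}$, into the rational functions provided by Theorem~\ref{cclvl}.

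First I would treat the type-$F$ case. Setting $s_1=0$ in the formula for $\clvlzf_{F_{n,\delta}(\lri)}(s_1,s_2)$, the numerator becomes $1-q^{\binom{2n+\delta-1}{2}-s}$ and the denominator becomes $(1-q^{\binom{2n+\delta}{2}-s})(1-q^{\binom{2n+\delta}{2}+1-s})$, using that $(2n+\delta-1)s_1$ vanishes. Recalling that the Dedekind zeta function of $K=\Frac(\ri)$ has the Euler product $\zeta_K(s)=\prod_{\p\in\specp}(1-q_\p^{-s})^{-1}$, hence $\zeta_K(s-a)=\prod_\p(1-q_\p^{a-s})^{-1}$ for any integer $a$, one reads off
\[
\prod_{\p}\clvlzf_{F_{n,\delta}(\lri)}(0,s)=\frac{\zeta_K(s-\binom{2n+\delta}{2}-1)\,\zeta_K(s-\binom{2n+\delta}{2})}{\zeta_K(s-\binom{2n+\delta-1}{2})},
\]
which is exactly~\eqref{kzetaF}. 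For $n=1$ (with $\delta=0$ this is the Heisenberg case) one checks the binomial coefficients still make sense, so the formula holds for all $n\in\N$.

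For types $G$ and $H$ the specialisation is equally mechanical but the expressions do not collapse to a ratio of Dedekind zeta functions, so one simply records the Euler product of local factors. Setting $T_1=1$ in $\clvlzf_{G_n(\lri)}(s_1,s_2)$: every power of $T_1$ disappears, the factor $(1-q^{-(n-1)}T_1^{n-1})$ becomes $(1-q^{-n+1})$, and $T_2\mapsto q^{-s}$, yielding the claimed local factor
\[
\frac{(1-q_\p^{2\binom{n}{2}-s})(1-q_\p^{2\binom{n}{2}+1-s})+q_\p^{n^2-s}(1-q_\p^{-n})(1-q_\p^{-n+1})}{(1-q_\p^{n^2-s})^2(1-q_\p^{n^2+1-s})};
\]
the same substitution in $\clvlzf_{H_n(\lri)}(s_1,s_2)$ gives the $H_n$ local factor, where $(1-q^{-n+1})(1-q^{-(n-1)}T_1^{n-1})\mapsto(1-q_\p^{-n+1})^2$. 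Taking the product over $\p\in\specp$ completes the proof. There is no real obstacle here: the only points requiring a word of care are that the specialisation identity~\eqref{specialisation} is already established in Part~I, that $T_1^{n-1}\mapsto 1$ collapses the two distinct binomials in the $G_n$ and $H_n$ numerators into squares, and that the whole manipulation is justified in the region of convergence of the Euler product, after which the identity extends by the rationality and meromorphic continuation results from~\cite{PL18}.
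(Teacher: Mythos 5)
Your proposal is correct and follows exactly the paper's own approach: the paper derives Corollary~\ref{kzf} as a direct consequence of Theorem~\ref{cclvl} via the specialisation~\eqref{specialisation}, setting $s_1=0$ (equivalently $T_1=1$) and $s_2=s$, then assembling the Euler product and, for type $F$, recognising each factor as a local Dedekind factor. The algebraic simplifications you record (the collapse of $T_1^{n}$ and $T_1^{2n-1}$ to $1$, and of $(1-q^{-n+1})(1-q^{-(n-1)}T_1^{n-1})$ to $(1-q_\p^{-n+1})^2$ for $H_n$) are exactly what is needed and are carried out correctly.
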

In particular, all local factors of the bivariate conjugacy class zeta functions of groups of type $F$, $G$, and $H$ are rational in $q_{\p}$, $q_{\p}^{-s_1}$, and $q_{\p}^{-s_2}$, 
whilst all local factors of their class number zeta functions are rational in $q_{\p}$ and $q_{\p}^{-s}$. Moreover, the local factors of both zeta functions satisfy 
functional equations. This generalises~\cite[Theorem~1.4]{PL18} for these groups. 

Formula~\eqref{kzetaF} is also shown in~\cite{ro17}; it is a consequence of both~\cite[Proposition~5.11 and Proposition~6.4]{ro17}; 
see Remarks~\ref{sod} and~\ref{minimality}. 
In~\cite[Section~4.2]{PL18}, we write the bivariate zeta functions of Definition~\ref{biva} in terms of $\p$-adic integrals. These integrals under specialisation~\eqref{specialisation} 
coincide with the integrals~\cite[(4.3)]{ro17} under the specialisation of the \emph{ask zeta function} to the class number zeta function given in~\cite[Theorem~1.7]{ro17}; 
cf.~\cite[Remark~4.10]{PL18}.

\subsection{Bivariate representation and twist representation zeta functions}
To state our next result, we introduce some notation.

Let $X,Y$ denote indeterminates in the field $\Q(X,Y)$. Given $n \in \N$, set \[(\underline{n})_{X}= 1-X^{n} \hspace{0.2cm}\text{ and }\hspace{0.2cm}
(\underline{n})_{X}!= (\underline{n})_{X}(\underline{n-1})_{X} \dots (\underline{1})_{X}.\] 
For $a,b \in \N_0$ such that $a\geq b$, the $X$-\emph{binomial coefficient} of $a$ over $b$ is
\[\binom{a}{b}_{X}= \frac{(\underline{a})_X}{(\underline{b})_{X}(\underline{a-b})_{X}}.\]

Given $n \in \N$, set $[n]=\{1,\dots, n\}$ and $[n]_0=[n]\cup\{0\}$. 
Given a subset $\{i_1, \dots , i_l \}\subset \N$, we write $\{i_1, \dots , i_l \}_{<}$ meaning that $i_1< i_2 < \dots < i_l$.
For $I=\{i_1, \dots , i_l \}_{<} \subseteq [n-1]_0$, let $\mu_j:=i_{j+1}-i_j$ for all $j \in [l]_0$, where $i_0=0$, $i_{l+1}=n$, and define 
\[\binom{n}{I}_X=\binom{n}{i_l}_X\binom{i_l}{i_{l-1}}_X \dots \binom{i_2}{i_1}_X.\]
The $Y$-\emph{Pochhammer symbol} is defined as
\[(X;Y)_{n}=\prod_{i=0}^{n-1}(1-XY^i).\]

\begin{thm}\label{rlvl}
 Let $n \in \N$ and $\delta \in \{0,1\}$. Then, for each nonzero prime ideal~$\p$ of~$\ri$ with $|\ri:\p|=q$, 
 \begin{align*}
  \rlvlzf_{\G(\lri)}(s_1,s_2)&=\frac{1}{1-q^{\bar{a}(\G,n)-s_2}} \sum_{I \subseteq [n-1]_0}f_{\G,I}(q^{-1})\prod_{i \in I}
  \frac{q^{\bar{a}(\G,i)-(n-i)s_1-s_{2}}}{1-q^{\bar{a}(\G,i)-(n-i)s_1-s_{2}}},
 \end{align*}
where  $f_{\G,I}(X)$ and $\bar{a}(\G,i)$, for all $I=\{i_1, \dots, i_l\}_{<} \subseteq [n-1]_0$ and for all $i\in [n]_0$, are defined in Table~\ref{Tablerlvl}.
\begin{table*}[h]\caption {Terms appearing in Theorem~\ref{rlvl}}\label{Tablerlvl}
	\centering
		\begin{tabular}{ c | c | c }
		  $\G$ & $f_{\G,I}(X)$ & $\bar{a}(\G,i)$ \\[0.5ex]
		  \hline
		  $F_{n,\delta}$ & $\binom{n}{I}_{X^2}(X^{2(i_1+\delta)+1};X^2)_{n-i_1}$ &  $\binom{2n+\delta}{2}-\binom{2i+\delta}{2}+2i+\delta$ \Tstrut\Bstrut\\
		  
		  $G_n$ & $\binom{n}{I}_{X}(X^{i_1+1};X)_{n-i_1}$ & $n^2-i^{2}+2i$ \Tstrut\Bstrut\\[1ex]
		  
		  $H_n$ & $\left( \prod_{j=1}^{l}(X^2;X^2)_{\lfloor \mu_j/2 \rfloor}^{-1} \right)(X^{i_1+1};X)_{n-i_1}$ & $\binom{n+1}{ 2}-\binom{i+1}{2}+2i$\Tstrut\Bstrut\\
		\end{tabular}
\end{table*}
\end{thm}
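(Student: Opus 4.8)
The plan is to compute the local bivariate representation zeta function $\rlvlzf_{\G(\lri)}(s_1,s_2) = \sum_{N\geq 0}\rzeta_{\G_N}(s_1)\,q^{-Ns_2}$ via the Kirillov orbit method, exactly as set up in~\cite[Section~4]{PL18}. Since $\Lambda$ is nilpotent of class $2$, the associated Lie lattice over $\lri/\p^N$ has the feature that the adjoint action is linear, so the co-adjoint orbits of $\G_N$ on the dual of its Lie algebra are in bijection with irreducible characters, and the dimension of a character is the square root of the index of the radical of the corresponding antisymmetric form. Concretely, writing $\omega$ for a generic element of $\Hom(\Lambda, \lri/\p^N)$ restricted to the derived sublattice, one obtains $\rzeta_{\G_N}(s_1)$ as a sum over such $\omega$ of $|\lri/\p^N : \text{Rad}(B_\omega^N)|^{-s_1/2-\dots}$ times appropriate counting factors; this is the content of~\cite[Section~4.2]{PL18} specialised to the present groups, and I would begin by recalling that $\p$-adic integral formula.

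Next I would carry out the $\p$-adic integration. The key combinatorial input is the elementary divisor type of the antisymmetric form $B_\omega$ attached to the commutator structure of $\Lambda$: for $F_{n,\delta}$ this is a generic $(2n+\delta)\times(2n+\delta)$ antisymmetric matrix of linear forms, for $G_n$ it is the $2n\times 2n$ antisymmetric matrix $\begin{pmatrix}0 & Y \\ -Y^t & 0\end{pmatrix}$ with $Y$ a generic $n\times n$ matrix, and for $H_n$ it is the analogous block matrix with $Y$ generic \emph{symmetric}. The $\p$-adic integral then factors through the integral over the space of such matrices weighted by the valuation of their elementary divisors; this is precisely the kind of integral computed by Stasinski--Voll~\cite{StVo14} in their work on representation zeta functions of these groups, and the evaluation produces sums over subsets $I\subseteq[n-1]_0$ recording which ``thresholds'' the elementary divisor valuations cross. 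The factor $\frac{1}{1-q^{\bar a(\G,n)-s_2}}$ is the ``$N\to\infty$'' geometric contribution coming from the ideal-counting variable $s_2$ when all of $\Lambda$ is seen, and the products $\prod_{i\in I}\frac{q^{\bar a(\G,i)-(n-i)s_1-s_2}}{1-q^{\bar a(\G,i)-(n-i)s_1-s_2}}$ encode the nested ranks; the numerator factors $f_{\G,I}(q^{-1})$ are the Igusa-type ``volumes'' of the relevant cells, which for $F$ and $G$ are $q$-binomial $\times$ Pochhammer products coming from counting flags of the appropriate type over the residue field, and for $H$ acquire the $(X^2;X^2)_{\lfloor\mu_j/2\rfloor}^{-1}$ factors because the symmetric-matrix cells are counted by orbits of a congruence action with quadratic-form stabilisers.

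In practice I expect to reduce to three separate but parallel computations, one per family, each amounting to: (i) identify $\bar a(\G,i)$ as $\dim\Lambda$ minus the dimension of the ``rank-$\leq 2i$ locus'' contribution, read off from Definition~\ref{fgh}; (ii) stratify $\Hom(\Lambda,\lri/\p^N)$ by the corank profile of $B_\omega$ modulo successive powers of $\p$; (iii) on each stratum, evaluate the resulting principal-congruence count, which is where the $q$-binomials and Pochhammer symbols appear via the Bruhat-type decomposition of $\Gl_n$, $\text{Sp}_{2n}$, or the symmetric analogue over $\lri/\p$; and (iv) resum the geometric series in $N$. The main obstacle will be step (iii) for $H_n$: the locus of symmetric matrices of bounded rank does not have as clean a cell structure as the antisymmetric ($F$) or general ($G$) cases, and extracting the closed product $\prod_{j=1}^l (X^2;X^2)_{\lfloor\mu_j/2\rfloor}^{-1}$ requires a careful count of symmetric matrices over $\lri/\p$ of prescribed rank in each block of the flag — essentially invoking the formula for the number of symmetric matrices of given rank over a finite field and simplifying the alternating products into the stated Pochhammer form. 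Throughout, I would lean on~\cite{StVo14} for the rank-stratification geometry of these prehomogeneous spaces and on~\cite{PL18} for the translation from the $\p$-adic integral to the rational function in $q^{-s_1}, q^{-s_2}$, so that the novelty here is the bookkeeping that isolates the $s_2$-dependence of the ideal sum on top of the Stasinski--Voll representation-theoretic computation.
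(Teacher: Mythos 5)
Your proposal is correct and follows essentially the same route as the paper: the proof there takes the Dirichlet-series formula of \cite[Proposition~4.7]{PL18} expressing $\rlvlzf_{\G(\lri)}(s_1,s_2)$ as a sum over elementary divisor types of the $B$-commutator matrix, partitions $W_b(\lrip)$ into the Stasinski--Voll strata $\textup{N}_{I,r_I}(\G)$, imports their cardinalities $|\textup{N}_{I,r_I}(\G)|=f_{\G,I}(q^{-1})q^{\sum_{i\in I}r_i(a(\G,i)-2i-\delta)}$ directly from \cite[Proposition~3.4]{StVo14}, and resums the geometric series in $r_I\in\N^I$. The only difference is one of emphasis: the hard combinatorics you flag as the ``main obstacle'' (in particular the symmetric case for $H_n$) is not re-derived in the paper but simply cited, so the actual argument reduces to the bookkeeping you describe in your final step.
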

The proof of Theorem~\ref{rlvl} may be found in Section~\ref{prlvl}. 

The numbers $\bar{a}(\G,i)$ are slight modifications of the numbers $a(\G,i)$ given in \cite[Theorem~C]{StVo14},
namely $\bar{a}(F_{n,\delta},i)=a(F_{n,\delta},i)+2i+\delta$ and $\bar{a}(\G,i)=a(\G,i)+2i$, for $\G(\ri)$ of type $G$ and $H$.

Let $\G$ be a unipotent group scheme over~$\ri$ such that $\G(\ri)$ has nilpotency class~$2$. The local factors of the bivariate representation zeta functions of $\G(\ri)$ 
specialise to the local factors of its twist representation zeta function~$\rtzeta_{\G(\ri)}(s)$. 
This is a representation zeta function encoding the numbers $\rg_n(G)$ of $n$-dimensional twist-isoclasses of irreducible complex representations of $G$. Twist isoclasses are the equivalence classes of the relation on the set of irreducible complex representations of~$G$ given by $\rho \thicksim \sigma$ if and only if there exists a $1$-dimensional representation~$\chi$ of~$G$ such that $\rho \cong \chi \otimes \sigma$. 
In the context of topological groups, only continuous representations are considered. 

Twist representation zeta functions of $\mathcal{T}$-groups are investigated, for instance, in~\cite{DuVo14, StVo14, StVo17}. 
In~\cite{Vo10}, Voll introduced a method to compute explicitly various zeta functions associated to groups and rings and deduced local functional equations from the obtained formulae. One of the applications presented in this paper is that almost all local factors of twist representation zeta functions of $\mathcal{T}$-groups have functional equations. In~\cite[Theorem~1.5]{HrMaRi}, the authors prove that \emph{all} local factors of these zeta functions are rational functions. This is a Corollary of~\cite[Theorem~1.3]{HrMaRi}, which asserts that multivariate zeta functions counting the equivalence classes in some uniformly definable family of equivalence relations are rational functions. 
In \cite[Table~1]{Ro18comp}, by implementing his methods in Zeta~\cite{RoZeta}, Rossmann provides fomulae for almost all local factors of twist representation zeta functions associated with all unipotent algebraic groups of dimension at most~$6$ over an arbitrary number field. 

In~\cite{PL18}, we show how to specialise bivariate representation zeta functions to twist representation zeta functions. 
 More precisely, we show that for a group~$\G(\ri)$ of nilpotency class~2, there is a constant $r=r(\G)$ such that 
\begin{equation}\label{specialisationirr}\prod_{\p \in \specp}\left((1-q^{r-s_2})\rlvlzf_{\G(\ric)}(s_1,s_2)\mid_{\substack{s_1\to s-2\\ s_2\to r\phantom{-2}}}\right)=
\rtzeta_{\G(\ri)}(s).
\end{equation}
Since groups of type $F$, $G$, and $H$ are $\mathcal{T}$-groups of class $2$, we obtain the formulae of~\cite[Theorem~C]{StVo14} by applying specialisation~\eqref{specialisationirr} to Theorem~\ref{rlvl}. 
\subsection{Joint distributions on hyperoctahedral groups}
The polynomials $f_{\G,I}(X)$ appearing in Theorem~\ref{rlvl} can be expressed in terms of distributions of statistics on Weyl groups of type $B$, also called hyperoctahedral groups~$B_n$; see Sections~\ref{Weyl} and~\ref{stat}. These are the groups~$B_n$ of permutations~$w$ of the set $[\pm n]_0=\{-n, \dots, -1\} \cup [n]_0$ such that $w(-i)=-w(i)$ for all $i \in [\pm n]_0$.

In Lemma~\ref{weylfgh}, we describe local bivariate representation zeta functions of groups $\G(\ri)$ as sums over $B_n$ in terms of statistics on such groups.
As the local factors of the bivariate representation and the bivariate conjugacy class zeta functions of $\G(\ri)$ specialise to its class number zeta function,
the formulae in terms of statistics on hyperoctahedral groups $B_n$ can be compared with the formulae of Corollary~\ref{kzf}, leading to formulae for the joint distribution of 
three functions on Weyl groups of type $B$; see Propositions~\ref{statF} and~\ref{statGH}.

More precisely, the formulae of Lemma~\ref{weylfgh} under specialisation~\eqref{specialisation} provide a formula of the following form for the class number zeta function of 
$\G(\lri)$:
\[\kzeta_{\G(\lri)}(s)=\frac{\sum_{w \in B_n}\chi_{\G}(w)q^{-\overline{h_\G}(w)-\des(w)s}}{\prod_{i=0}^{n}(1-q^{\bar{a}(\G,i)-s})},\]
where $\chi_\G$ is one of the linear characters $(-1)^{\negb}$ or $(-1)^{\ell}$ of $B_n$, where $\negb(w)$ denotes the number of negative entries of $w$, and $\ell$ is the 
standard Coxeter length function of~$B_n$. 
Moreover, the functions $\overline{h_{\G}}$ are sums of statistics on $B_n$ for each $\G$ and $\des(w)$ is the cardinality of the descent set of $w \in B_n$; 
see Section~\ref{Weyl} for definitions.
\subsection{Local functional equations}
The formulae for the bivariate zeta functions given in Theorems~\ref{rlvl} and~\ref{cclvl} allow us to strengthen~\cite[Theorem~1.4]{PL18} for groups of type $F$, $G$, and $H$ by 
showing that its conclusion holds for \emph{all} local factors:
\begin{thm}\label{functeq} For $\ast\in\{\irrup,\ccup\}$ and \emph{all} nonzero prime ideal $\p$ of $\ri$ with $|\ri:\p|=q$, the local bivariate zeta function $\mathcal{Z}_{\G(\lri)}^{\ast}(s_1, s_2)$ satisfies the 
functional equation
\begin{equation*}
 \mathcal{Z}_{\G(\lri)}^{\ast}(s_1, s_2)\mid_{q \to q^{-1} }= -q^{h-s_2}\mathcal{Z}_{\G(\lri)}^{\ast}(s_1,s_2),
\end{equation*}
where $h$ is the torsion rank of $\Lambda(\lri)=\Lambda\otimes_{\lri}\lri$; see the exact value of $h$ in Table~\ref{table}.
% \begin{equation*}
% h=\dim\G=\begin{cases}\binom{2n+\delta+1}{2},&\text{ if }\G=F_{n,\delta},\\ 
%                                               n^2+2n,&\text{ if }\G=G_n,\\ 
%                                               \binom{n+1}{2}+2n,&\text{ if }\G=H_n. 
%                                 \end{cases}
% \end{equation*}
 \end{thm}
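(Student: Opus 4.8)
The plan is to derive the functional equation directly from the explicit rational expressions obtained in Theorems~\ref{rlvl} and~\ref{cclvl}, treating the two cases $\ast = \ccup$ and $\ast = \irrup$ separately but along the same lines. For the conjugacy class zeta functions, I would simply take each of the three closed formulae for $\clvlzf_{F_{n,\delta}(\lri)}$, $\clvlzf_{G_n(\lri)}$, and $\clvlzf_{H_n(\lri)}$ from Theorem~\ref{cclvl}, substitute $q \to q^{-1}$ (equivalently $q^{-s_j} = T_j \to q^{s_j} = T_j^{-1}$ together with $q \to q^{-1}$), and clear denominators. Each numerator and denominator is a product (or small sum of products) of factors of the shape $1 - q^{a} T_1^{b} T_2^{c}$, and under inversion such a factor becomes $1 - q^{-a} T_1^{-b} T_2^{-c} = -q^{-a}T_1^{-b}T_2^{-c}(1 - q^{a}T_1^{b}T_2^{c})$; so the whole expression picks up an explicit monomial prefactor in $q$, $T_1$, $T_2$ times a sign, and one checks that the $T_1$-exponent cancels, leaving precisely $-q^{h - s_2}$ with $h$ the torsion rank of $\Lambda(\lri)$ as recorded in Table~\ref{table}. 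For $F_{n,\delta}$ this is a three-line bookkeeping exercise; for $G_n$ and $H_n$ the only subtlety is the two-term numerator, where one must verify that the prefactors produced by the two summands agree so that they can be pulled out in front — this follows because both terms have the same total $(q, T_1, T_2)$-degree, which is exactly the content of the symmetry and is readily checked from Table~\ref{table}.

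For the representation zeta functions, I would work from the formula in Theorem~\ref{rlvl}, which expresses $\rlvlzf_{\G(\lri)}(s_1,s_2)$ as $(1-q^{\bar a(\G,0)-s_2})^{-1}$ times a sum over subsets $I \subseteq [n-1]_0$ of $f_{\G,I}(q^{-1})$ times a product of factors $\frac{q^{\bar a(\G,i)-(n-i)s_1-s_2}}{1-q^{\bar a(\G,i)-(n-i)s_1-s_2}}$. The key point is a term-by-term functional equation: I would show that applying $q \to q^{-1}$ sends the summand indexed by $I$ to $(-1)^{\text{something}} q^{(\ldots)}$ times the summand indexed by the \emph{same} $I$, with the combinatorial prefactor $f_{\G,I}$ transforming according to the standard inversion identities for $X$-binomials and Pochhammer symbols, namely $\binom{a}{b}_{X^{-1}} = X^{-b(a-b)}\binom{a}{b}_{X}$ and $(X^{-1};X^{-1})_k = (-1)^k X^{-\binom{k+1}{2}}(X;X)_k$ and their analogues in $X^2$. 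Combining the contribution from $f_{\G,I}(q)$, from each Euler-type factor $\frac{q^{\bar a - (n-i)s_1 - s_2}}{1 - q^{\bar a - (n-i)s_1 - s_2}} \mapsto -q^{-(\bar a - (n-i)s_1 - s_2)}\cdot\frac{q^{\bar a-(n-i)s_1-s_2}}{1-q^{\bar a-(n-i)s_1-s_2}}$, and from the leading factor $(1-q^{\bar a(\G,0)-s_2})^{-1}$, one gets an overall factor independent of $I$ — this independence is exactly what makes the sum transform cleanly — equal to $-q^{h-s_2}$. The $s_1$-dependence must cancel in this overall factor, and one verifies this using the identities $\sum_{i\in I}(n-i)$ appearing in the exponent from the denominators against the $s_1$-exponent hidden in the transformation of $f_{\G,I}$ and of the $\binom{n}{I}$ or $\prod(X^2;X^2)^{-1}_{\lfloor\mu_j/2\rfloor}$ piece.

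The main obstacle is the last cancellation: showing that the $I$-dependent pieces — the shift in the $q$-exponent coming from $f_{\G,I}(q^{-1})$ versus $f_{\G,I}(q)$, and the shifts coming from the denominators indexed by $i \in I$ — conspire to produce an overall factor that is genuinely independent of $I$ and carries no $s_1$. This is a purely combinatorial identity about the numbers $\bar a(\G,i)$ and the degrees of $f_{\G,I}$, and I expect it to reduce, after expanding everything via Table~\ref{Tablerlvl}, to a telescoping or binomial identity (e.g.\ for type $F$, to the fact that $\binom{2n+\delta}{2} - \binom{2i+\delta}{2} + \binom{2i+\delta}{2} - \cdots$ telescopes across a chain $I = \{i_1 < \cdots < i_l\}$). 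A cleaner alternative, which I would pursue in parallel, is to \emph{not} re-prove the functional equation from the closed formula but instead to invoke the symmetry already present in the underlying $\p$-adic integral or in the hyperoctahedral-group description of Lemma~\ref{weylfgh}: the statistics $\des$, $\negb$, $\ell$, and $\overline{h_\G}$ on $B_n$ satisfy well-known complementation symmetries under $w \mapsto w_0 w$ (multiplication by the longest element), and these translate directly into the functional equation via the sum-over-$B_n$ formula, with $h$ emerging as the total degree. Either route works; the $\p$-adic/Weyl-group route is conceptually shorter but requires carefully matching the degree bookkeeping to the value of $h$ in Table~\ref{table}, whereas the brute-force route from Theorems~\ref{rlvl} and~\ref{cclvl} is longer but entirely mechanical.
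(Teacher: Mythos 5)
Your treatment of the conjugacy class case is correct but takes a genuinely different route from the paper's: you verify the functional equation by direct substitution in the closed formulae of Theorem~\ref{cclvl}, whereas the paper deduces it by combining \cite[Theorem~1.4]{PL18} (functional equations for \emph{almost all} $\p$) with the observation that Theorem~\ref{cclvl} exhibits every local factor as one and the same rational function in $q$, $q^{-s_1}$, $q^{-s_2}$. Your computation does go through: for instance, for $G_n$ both numerator summands acquire the common prefactor $q^{-2n^2+2n-1}T_1^{-(3n-1)}T_2^{-2}$ under inversion, and dividing by the denominator's prefactor leaves exactly $-q^{n^2+2n-s_2}$, matching $h=n^2+2n$ from Table~\ref{table}. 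So this part is a valid, self-contained alternative that buys independence from \cite{PL18}.

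The gap is in your first route for the representation case. It is \emph{not} true that $q\mapsto q^{-1}$ sends the summand of Theorem~\ref{rlvl} indexed by $I$ to an $I$-independent monomial times the same summand. Already for $G_1$ (the Heisenberg case, $n=1$): the summand for $I=\emptyset$ equals $1$ and is fixed by inversion, whereas the summand for $I=\{0\}$, namely $(1-q^{-1})\tfrac{q^{1-s_1-s_2}}{1-q^{1-s_1-s_2}}$, picks up the factor $q^{s_1+s_2}\neq 1$. The prefactors are $I$-dependent, and the functional equation only emerges after summing over all $I$. The correct mechanism is precisely your second route, which is the paper's: rewrite the subset sum as a sum over $B_n$ divided by $\prod_{i=0}^{n}\bigl(1-q^{\bar{a}(\G,i)-(n-i)s_1-s_2}\bigr)$ (Lemma~\ref{weylfgh}), and use that $w\mapsto ww_0$ complements descent sets while $g(ww_0)=g(w_0)-g(w)$ for $g\in\{\invb,\negb,\ell\}$ and $L(ww_0)=L(w_0)-L(w)$ (\cite[Corollary~7]{StVo13}), so that $h_{\G}(ww_0)+h_{\G}(w)=h_{\G}(w_0)$; this is exactly the hypothesis needed to run the argument of \cite[Theorem~2.6]{KlVo09}, which the paper invokes. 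So "either route works" is too optimistic: only the Weyl-group route works for the representation case, and the degree bookkeeping you flag there is precisely what \cite[Theorem~2.6]{KlVo09} packages.
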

In fact,~\cite[Theorem~1.4]{PL18} states that almost all local factors satisfy functional equations 
of such form, whilst Theorems~\ref{cclvl} and~\ref{rlvl} state that all local factors are given by the same rational functions.
We give an alternative proof for the representation case in Section~\ref{pfuncteq}.
\subsection{Notation}\label{notation}
The following list collects frequently used notation.
\begin{table*}[h]
	\centering
		\begin{tabular}{ c | l }
				$\N$ & $\{1,2,\dots\}$\\
				$\N_0$ & $\{0,1,2\dots\}$\\
				$[n]$ & $\{1,\dots,n\}$ \\
				$[n]_0$ & $\{0,1,\dots,n\}$\\
				$[\pm n]$ & $\{-n,\dots,-1\}\cup [n]_0$\\
 %\end{tabular}
 %\end{table*}
 %\begin{table*}[h]
 	%\centering
 		%\begin{tabular}{ c | l }
				\hline
				$X,Y$ & indeterminates in the field $\Q(X,Y)$\\
				$(\underline{n})_X$ & $1-X^n$, for $n \in \N$\\
				$(\underline{n})_{X}!$& $(\underline{n})_{X}(\underline{n-1})_{X} \dots (\underline{1})_{X}$, for $n \in \N$\\[0.5ex]
				$\binom{a}{b}_{X}$ & $\frac{(\underline{a})_X}{(\underline{b})_{X}(\underline{a-b})_{X}}$, for $a\geq b$ in~$\N$\\[1.5ex]
				$(X;Y)_n$&$\prod_{i=0}^{n-1}(1-XY^i)$\\[0.5ex]
%\end{tabular}
%\end{table*}
%\begin{table*}[h]
	%\centering
		%\begin{tabular}{ c | l }
				%$\gpx(X)$&$\frac{1}{1-X}$\\[0.5ex]
				\hline
% \end{tabular}
% \end{table*}
% \begin{table*}[h]
% 	\centering
% 		\begin{tabular}{ c | l }
				$I=\{i_1, \dots, i_{l}\}_{<}$ & set $I$ of nonnegative integers $i_1<i_2< \dots<i_l$ \\
				$\binom{n}{I}$& $\binom{n}{i_l}\binom{i_l}{i_{l-1}} \dots \binom{i_2}{i_1}$ \\
				$i_0$& $0$ \\
				$i_{l+1}$ & $n$\\
				$\mu_j$ & $i_{j+1}-i_j$, $j \in [l]_0$\\
				\hline
				$K$ & number field\\
				$\ri$& ring of integers of $K$\\
				$\p$& nonzero prime ideal of $\ri$\\
				$\lri=\ric$& completion of $\ri$ at $\p$\\
				$\lri^n$& $n$-fold Cartesian power $\lri\times\dots\times\lri$ \\
				$\p^m$ & $m$th ideal power $\p\cdots\p$ \\
				$\p^{(m)}$& $m$-fold Cartesian power $\p\times\dots\times\p$ \\
				$\textup{Spec}(\ri)$ & set of all prime ideals of~$\ri$\\
				\hline
				$\Lambda$ & one of the $\Z$-Lie lattices $\Fnd$, $\Gn$, or $\Hn$;  see Definition~\ref{fgh}.\\
				$\G$ & one of the unipotent group schemes $F_{n,\delta}$, $G_n$, or $H_n$.\\
				$\commGen$ & $A$-commutator matrix of $\Lambda$; see Definition~\ref{commutator}\\
				$B_{\Lambda}(\underline{Y})$ & $B$-commutator matrix of $\Lambda$; see Definition~\ref{commutator}\\
				\hline
				$v_{\p}$&$\p$-adic valuation\\
				$|.|_{\p}$&$\p$-adic norm\\
				$\|(z_j)_{j\in J}\|_\p$&$\max(|z_j|_\p)_{j\in J}=q^{-v_{\p}((z_i)_{i\in I})}$, $J$ finite index set\\[1ex]
				$W_{k}(\lrip)$&$\{\mathbf{x}\in(\lrip)^k\mid v_{\p}(\mathbf{x})=0\}, ~k\in\N$, $N\in \N_0$\\
				$\W_{k}$&$\{\mathbf{x}\in\lri^k\mid v_{\p}(\mathbf{x})=0\}, ~k \in \N$\\
				\hline
				$\diag(a_{1}, \dots, a_{n})$ & diagonal $n \times n$-matrix $\left(\begin{smallmatrix} a_{1}&&\\&\ddots&\\&&a_{n} \end{smallmatrix}\right)$.
\end{tabular}
\end{table*}
\section{Bivariate zeta functions and \texorpdfstring{$\p$}{p}-adic integrals}\label{padicintegral}
In this section we calculate generic $\p$-adic integrals which are needed in the present paper, and recall 
from~\cite[Proposition~4.8]{PL18} how to write the local bivariate zeta functions of groups of type $F$, $G$, and $H$ in terms of $\p$-adic integrals.

For the rest of Section~\ref{padicintegral}, let $\p$ be a fixed nonzero prime ideal of~$\ri$, and $\lri$ denotes the completion of~$\ri$ at~$\p$. Denote by~$q$ the cardinality of~$\ri/\p$ and by~$p$ its characteristic.

\subsection{Some \texorpdfstring{$\p$}{p}-adic integrals}\label{appadic}
Before we state the first result of this section, we recall some notation. 

Let $z \in \lri$ and let $(z)=\p^{e}\p_{1}^{e_1}\cdots\p_{r}^{e_r}$ be the prime factorisation of the ideal $(z) \lhd \ri$ with $\p_i\neq \p$, for all $i\in[r]$. The $\p$-\emph{adic valuation} of $z$ is given by $v_{\p}(z)=e$, and its $\p$-\emph{adic norm} is $|z|_{\p}=q^{-v_{\p}(z)}$. For a finite index set $J$ and $(z_j)_{j\in J}\in \lri^{J}$, 
define $\|(z_j)_{j\in J}\|_\p=\max(|z_j|_\p)_{j\in J}.$
We denote by $\p^m$ the $m$th ideal power $\p\cdots\p$ and by $\p^{(m)}$ the $m$-fold Cartesian power $\p\times \dots \times \p$.

For each $k \in \N$, consider the set 
\[\W_k:= \lri^k \setminus \p^k= \{ \underline{x} \in \lri^k \mid v_{\p}(\underline{x})=0\}.\]

From now on, let $\mu$ denote the additive Haar measure on $\lri$, normalised so that $\mu(\lri)=1$. We also denote by $\mu$ the product measure on $\lri^n$, for $n \in \N$.

The following is well known; see for instance~\cite[Lemma~2.2.1]{Linsphd}. 
\begin{pps}\label{integral1} Let $r$ be a complex variable. Then, for each $k \in \N$,
 \[\int_{w\in\p^k}|w|_{\p}^{r}d\mu=\frac{q^{-k(r+1)}(1-q^{-1})}{1-q^{-r-1}},\] if the integral on the left-hand side converges absolutely.
\end{pps}

The following lemma is a direct consequence of~\cite[Lemma~5.8]{ro17}, which assures in particular that, for complex variables $r$ and $s$, one has
\begin{equation}\label{Roeq}
 \int_{(y,\underline{x})\in\lri\times\lri^n}|y|_{\p}^{r}\|x_1,\dots, x_n,y\|_{\p}^{s}d\mu=\frac{(1-q^{-1})(1-q^{-r-n-1})}{(1-q^{-r-s-n-1})(1-q^{-r-1})},
\end{equation}
if the integral on the left-hand side converges absolutely.
\begin{pps}\label{integral2} Let $r$ and $s$ be complex variables. 
Then, for each $n \in \N_0$,
  \begin{align*} \int_{(y,\underline{x})\in\p\times\lri^n}\hspace{-0.5cm}|y|_{\p}^{r}\|x_1, \dots, x_n, y\|_{\p}^{s}d\mu&=\frac{(1-q^{-1})(1-q^{-n}+q^{-s-n}-q^{-r-s-n-1})q^{-r-1}}{(1-q^{-r-s-n-1})(1-q^{-r-1})},\\
    \int_{(y,\underline{x})\in\p\times\p^{(n)}}\hspace{-0.5cm}|y|_{\p}^{r}\|x_1, \dots, x_n, y\|_{\p}^{s}d\mu&=\frac{(1-q^{-1})(1-q^{-r-n-1})q^{-r-s-n-1}}{(1-q^{-r-s-n-1})(1-q^{-r-1})},
  \end{align*}
if the integrals on the left-hand side of each equality converge absolutely.
\end{pps}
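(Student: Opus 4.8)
Both identities in Proposition~\ref{integral2} should follow from \eqref{Roeq} by splitting the domain of integration in the first coordinate $y$ according to whether $v_{\p}(y)=0$ or $v_{\p}(y)\geq 1$, i.e.\ $\lri = \W_1 \sqcup \p$. First I would record the consequence of \eqref{Roeq} that
\[
\int_{(y,\underline{x})\in\lri\times\lri^n}|y|_{\p}^{r}\|x_1,\dots,x_n,y\|_{\p}^{s}\,d\mu
=\frac{(1-q^{-1})(1-q^{-r-n-1})}{(1-q^{-r-s-n-1})(1-q^{-r-1})},
\]
and then compute the integral over the piece $y\in\W_1$ directly: on $\W_1$ one has $|y|_\p=1$, so $\|x_1,\dots,x_n,y\|_\p=1$ as well (the norm is already maximal), and the integrand is identically $1$; hence that contribution is just $\mu(\W_1\times\lri^n)=1-q^{-1}$. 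Subtracting gives the integral over $y\in\p$, which is the first displayed formula after simplification; I would then verify the stated closed form by putting everything over the common denominator $(1-q^{-r-s-n-1})(1-q^{-r-1})$ and checking the numerator equals $(1-q^{-1})(1-q^{-n}+q^{-s-n}-q^{-r-s-n-1})q^{-r-1}$.

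For the second identity, I would further split the domain $\p\times\lri^n$ of the first formula's integral according to $\underline{x}\in\W_n$ versus $\underline{x}\in\p^{(n)}$. When $y\in\p$ and $\underline{x}\in\W_n$, we have $\|x_1,\dots,x_n,y\|_\p = \|x_1,\dots,x_n\|_\p = 1$, so the integrand is $|y|_\p^{r}$ and this piece factors as $\bigl(\int_{y\in\p}|y|_\p^r\,d\mu\bigr)\cdot\mu(\W_n) = \frac{q^{-r-1}(1-q^{-1})}{1-q^{-r-1}}(1-q^{-n})$ by Proposition~\ref{integral1}. Subtracting this from the first formula yields the integral over $\p\times\p^{(n)}$; again a routine common-denominator simplification should produce $\frac{(1-q^{-1})(1-q^{-r-n-1})q^{-r-s-n-1}}{(1-q^{-r-s-n-1})(1-q^{-r-1})}$. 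Alternatively, the second integral can be computed from scratch: on $\p\times\p^{(n)}$ substitute $y=\pi y'$, $x_i=\pi x_i'$ with $(y',\underline{x}')\in\lri\times\lri^n$ and $\pi$ a uniformiser, picking up a Jacobian factor $q^{-(n+1)}$ and factors $|y|_\p^r = q^{-r}|y'|_\p^r$, $\|\dots\|_\p^s = q^{-s}\|\dots\|_\p^s$, which reduces it to \eqref{Roeq} multiplied by $q^{-r-s-n-1}$; this immediately gives the claimed expression, and is perhaps the cleanest route.

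**Main obstacle.** There is no serious mathematical obstacle here — the only thing to be careful about is the absolute convergence hypothesis, which is assumed in the statement, and the bookkeeping in the common-denominator simplifications (signs and exponents in the numerator of the first formula are easy to get wrong). I would double-check the first numerator by evaluating both sides at a convenient specialisation, e.g.\ $n=0$, where the left-hand side is $\int_{y\in\p}|y|_\p^{r+s}\,d\mu = \frac{q^{-r-s-1}(1-q^{-1})}{1-q^{-r-s-1}}$ by Proposition~\ref{integral1}, and confirm the right-hand side collapses to the same thing. I would present the rescaling argument for the second identity and the split-and-subtract argument for the first, as that keeps the computations shortest.
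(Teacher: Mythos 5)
Your proposal is correct and its main line of argument is exactly the paper's: the first identity by writing $\p\times\lri^n$ as $\lri\times\lri^{n}$ minus $\W_1\times\lri^n$ (where the integrand is identically $1$) and subtracting $1-q^{-1}$ from \eqref{Roeq}, and the second by further removing the piece $\p\times\W_n$, whose contribution $(1-q^{-n})\int_{y\in\p}|y|_\p^r\,d\mu$ is evaluated via Proposition~\ref{integral1}. The only departure is your preferred treatment of the second identity by the substitution $y=\pi y'$, $x_i=\pi x_i'$, which rescales the integral over $\p\times\p^{(n)}$ to $q^{-r-s-n-1}$ times \eqref{Roeq}; this is also valid and gives the stated closed form immediately, avoiding the common-denominator simplification, whereas the paper's subtraction route keeps both identities within one uniform decomposition scheme.
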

\begin{proof} 
%Recall the notation $\W_{k}=\{\mathbf{x}\in\lri^k\mid v_{\p}(\mathbf{x})=0\}$. 
Since $\p\times\lri^n=\lri\times\lri^n\setminus \W_1\times\lri^n$ and $y \in \W_1$ implies both $|y|_{\p}=1$ and $\|x_1, \dots, x_n,y\|_{\p}=1$, it follows that
\begin{align*} &\int_{(y,\underline{x})\in \p\times\lri^n}\hspace{-0.3cm}|y|_{\p}^{r}\|x_1, \dots, x_n, y\|_{\p}^{s}d\mu
 &=\int_{(y,\underline{x})\in\lri\times\lri^{n}}\hspace{-0.3cm}|y|_{\p}^{r}\|x_1, \dots, x_n, y\|_{\p}^{s}d\mu-\mu(\W_1\times\lri^{n}).
 \end{align*}
The first claim then follows from~\eqref{Roeq} and the fact that $\mu(\W_1\times\lri^{n})=1-q^{-1}$.
Analogously, since $\p\times\p^{(n)}=\p\times\lri^n\setminus\p\times\W_n$, 
 \begin{align*} &\int_{(y,\underline{x})\in\p\times\p^n}|y|_{\p}^{r}\|x_1, \dots, x_n, y\|_{\p}^{s}d\mu\\
 &=\int_{(y,\underline{x})\in \p\times\lri^{n}}|y|_{\p}^{r}\|x_1, \dots, x_n, y\|_{\p}^{s}d\mu-(1-q^{-n})\int_{y\in\p}|y|_{\p}^{r}d\mu.
 \end{align*}
The second claim follows from the first part and Proposition~\ref{integral1}.
\end{proof}
Let $\underline{X}=(X_{11}, \dots, X_{2n})$ be a vector of variables. In the following, consider the matrix 
\begin{equation}\label{matrix}M(\underline{X})=\left[ \begin {array}{cccc} X_{11}&X_{12}&\dots&X_{1n}\\ \noalign{\medskip}X_{21}&X_{22}&\dots&X_{2n}\end {array} \right]\in \Mat_{2\times n}(\lri[\underline{X}]),\end{equation} 
and, for $1\leq i < j \leq n$, write $M_{ij}(\underline{X}):=X_{1i}X_{2j}-X_{1j}X_{2i}$. 

\begin{lem}\label{Lemma:Minors} For $\textbf{\textup{x}}=(1+x_{11}, x_{12}, \dots, x_{2n}) \in \lri^{2n}$, with $x_{ij} \in \p$ for all $i \in [2]$ and $j \in [n]$, we have 
\begin{equation}\label{LemmaMinors}\|\{M_{ij}(\textbf{\textup{x}})\mid~1\leq i<j\leq n\}\|_{\p}=\|\{M_{1i}\mid~i \in \{2, \dots,n\}\}\|_{\p}.\end{equation}
In other words, there is $j \in \{2, \dots, n\}$ such that the minor $M_{1j}(\textbf{\textup{x}})$ has minimal valuation among all $M_{ij}(\textbf{\textup{x}})$. 
\end{lem}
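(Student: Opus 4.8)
The plan is to deduce \eqref{LemmaMinors} from a single Plücker-type identity among the $2\times 2$ minors of $M(\mathbf{x})$, the key point being that the $(1,1)$-entry $1+x_{11}$ is a unit of $\lri$ precisely because $x_{11}\in\p$. Throughout, write the entries of $\mathbf{x}$ as $X_{11}=1+x_{11}$ and $X_{ki}=x_{ki}$ for $(k,i)\neq(1,1)$, so that $M_{ij}(\mathbf{x})=X_{1i}X_{2j}-X_{1j}X_{2i}$ for $1\le i<j\le n$.

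First I would record, for each pair $2\leq i<j\leq n$, the identity obtained by expanding along the last row the determinant
\[\det\begin{pmatrix} X_{11} & X_{1i} & X_{1j}\\ X_{21} & X_{2i} & X_{2j}\\ X_{11} & X_{1i} & X_{1j}\end{pmatrix}=0,\]
which vanishes since two rows coincide; the expansion reads $X_{11}\,M_{ij}(\mathbf{x})=X_{1i}\,M_{1j}(\mathbf{x})-X_{1j}\,M_{1i}(\mathbf{x})$. Since $X_{11}=1+x_{11}$ is a unit of $\lri$ and $X_{1i}=x_{1i},\,X_{1j}=x_{1j}\in\p\subseteq\lri$ have non-negative valuation, applying $v_{\p}$ yields
\[v_{\p}\bigl(M_{ij}(\mathbf{x})\bigr)=v_{\p}\bigl(X_{1i}M_{1j}(\mathbf{x})-X_{1j}M_{1i}(\mathbf{x})\bigr)\geq\min\bigl\{v_{\p}(M_{1i}(\mathbf{x})),\,v_{\p}(M_{1j}(\mathbf{x}))\bigr\},\]
equivalently $|M_{ij}(\mathbf{x})|_{\p}\leq\max\{|M_{1i}(\mathbf{x})|_{\p},\,|M_{1j}(\mathbf{x})|_{\p}\}$, for all $2\leq i<j\leq n$.

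Then I would conclude by comparing the two maxima. The right-hand side of \eqref{LemmaMinors} is the maximum of $|\cdot|_{\p}$ over the sub-collection of minors with first index $1$, so it is at most the left-hand side. Conversely, the displayed bound shows every minor $M_{ij}(\mathbf{x})$ with $i\geq 2$ has $\p$-adic norm at most $\max_{k\in\{2,\dots,n\}}|M_{1k}(\mathbf{x})|_{\p}$, and the minors with $i=1$ trivially do too; hence the left-hand side is at most the right-hand side. Equality follows, and in particular the minimum of $v_\p$ over all $M_{ij}(\mathbf{x})$ is attained at some $M_{1j}(\mathbf{x})$ with $j\in\{2,\dots,n\}$. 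There is essentially no obstacle here: the only places the hypothesis $x_{ki}\in\p$ is used are to guarantee that $X_{11}$ is a unit (so dividing by it costs nothing in valuation) and that the remaining first-row entries lie in $\lri$; the cases $n\le 2$ are vacuous or trivial.
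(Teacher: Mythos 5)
Your proof is correct, and it takes a genuinely different (and substantially cleaner) route than the paper's. Both proofs begin with the same reduction: to prove the equality of maxima it suffices to show $\|M_{1i}(\mathbf{x}), M_{1j}(\mathbf{x}), M_{ij}(\mathbf{x})\|_\p = \|M_{1i}(\mathbf{x}), M_{1j}(\mathbf{x})\|_\p$ for each pair $2\le i<j\le n$. Where you diverge is in how this three-minor comparison is handled. The paper sets $b_{kl}=v_\p(x_{kl})$, introduces correction terms $\varepsilon_1,\varepsilon_2,\varepsilon_3$ measuring the gap between $v_\p(M_{\bullet})$ and the naive lower bound from the ultrametric inequality, and then grinds through five cases depending on which of $b_{2(i+1)}$ versus $b_{1(i+1)}+b_{21}$ achieves the minimum and whether the $\varepsilon$'s vanish; the hardest case (Case~5) in fact invokes an algebraic rearrangement that is essentially your Pl\"ucker relation in disguise, used there only to conclude $\varepsilon_3\ge\min\{\varepsilon_1,\varepsilon_2\}$.

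Your proof sidesteps the entire case analysis by writing the Pl\"ucker-type syzygy
\[
X_{11}\,M_{ij}(\mathbf{x}) \;=\; X_{1i}\,M_{1j}(\mathbf{x}) - X_{1j}\,M_{1i}(\mathbf{x})
\]
explicitly. Since $X_{11}=1+x_{11}$ is a unit (this is exactly where the hypothesis $x_{11}\in\p$ enters) and $X_{1i},X_{1j}\in\lri$ have nonnegative valuation, the ultrametric inequality immediately gives $v_\p(M_{ij})\ge\min\{v_\p(M_{1i}),v_\p(M_{1j})\}$, and the two-sided containment of max-sets finishes the argument. This is strictly shorter and, I would say, more transparent: the one identity does the work that the paper distributes over five cases and three auxiliary quantities. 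I verified the sign in the row expansion ($X_{11}M_{ij}-X_{1i}M_{1j}+X_{1j}M_{1i}=0$) and the reduction steps; nothing is missing. If anything, you could tighten the exposition by noting you only need $X_{1i},X_{1j}\in\lri$ (not $\in\p$) for the final estimate, while membership in $\p$ is what makes $X_{11}$ a unit -- which you do already remark at the end.
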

\begin{proof} We only have to show this result for $n=3$ and the general case will follow, as we now explain. Assume that, for all $\textbf{a}=(1+a_{11}, a_{12}, a_{13}, a_{21}, a_{22}, a_{23}) \in \lri^{6}$ with~$a_{ij} \in \p$, the matrix
\begin{equation}\label{Ma}M(\textbf{a})= \left[\begin {array}{cccc} 1+a_{11}&a_{12}&a_{13}\\ \noalign{\medskip}a_{21}&a_{22}&a_{23}\end {array} \right]\end{equation}
has minors satisfying	 $\|M_{12}(\textbf{a}),M_{13}(\textbf{a}),M_{23}(\textbf{a})\|_{\p}=\|M_{12}(\textbf{a}),M_{13}(\textbf{a})\|_{\p}$. Given a matrix of the form~\eqref{matrix} of size $2 \times n$, for arbitrary $n \in \N$, we obtain that for all $1 \leq i < j \leq n$, 
\[\|M_{1i}(\textbf{x}), M_{1j}(\textbf{x}), M_{ij}(\textbf{x})\|_{\p}= \|M_{1i}(\textbf{x}), M_{1j}(\textbf{x})\|_{\p},\]
and therefore~\eqref{LemmaMinors} holds. 

Let us then show that the matrix~\eqref{Ma} satisfies $\|M_{12}(\textbf{a}),M_{13}(\textbf{a}),M_{23}(\textbf{a})\|_{\p}=\|M_{12}(\textbf{a}),M_{13}(\textbf{a})\|_{\p}$.
By definition,
\begin{align*} 	M_{1i}(\textbf{a}) &= (1+a_{11})a_{2i}-a_{1i}a_{21}, \text{ for }i=2,3, \text{ and}\\
								M_{23}(\textbf{a}) &= a_{12}a_{23}-a_{13}a_{22}.
\end{align*}
Denote by $b_{ij}$ the $\p$-adic valuation of $a_{ij}$ for each $i \in [2]$ and $j \in [3]$. Notice that all $b_{ij}$ are greater than zero. Since $v_{\p}(1+a_{11})=0$,  
\[v_{\p}(M_{1i}(\textbf{a}))\geq \min\{b_{2i}, b_{1i}+b_{21}\}, ~i=2,3.\]
Moreover,
\[v_{\p}(M_{23}(\textbf{a})) \geq \min\{b_{12}+b_{23}, b_{13}+b_{22}\}.\]

Choose $\varepsilon_1, \varepsilon_2, \varepsilon_3 \in \N_0$ such that 
\begin{align*} 	v_{\p}(M_{1(i+1)}(\textbf{a}))&= \min\{b_{2(i+1)}, b_{1(i+1)}+b_{21}\}+\varepsilon_i, \text{ for }i \in [2], \text{ and }\\
								v_{\p}(M_{23}(\textbf{a}))&= \min\{b_{12}+b_{23}, b_{13}+b_{22}\}+\varepsilon_3.
\end{align*}
Notice that $\varepsilon_i$ can only be nonzero if $b_{2(i+1)}=b_{1(i+1)}+b_{21}$, for $i=1,2$, and if $b_{12}+b_{23}= b_{13}+b_{22}$, for $i=3$. 

In the following, we divide the proof in five cases, considering values of~$\varepsilon_1$, $\varepsilon_2$ and $v_{\p}(M_{23})$.
In all cases, we show that $v_{p}(M_{23})\geq v_{\p}(M_{1i})$ for some $i \in \{2,3\}$.

\noindent
\textbf{Case~1:} Assume $\varepsilon_1=0$ and $v_{\p}(M_{23}(\textbf{a}))=b_{13}+b_{22}+\varepsilon_3$. Let us show that in this case $v_{\p}(M_{12}(\textbf{a})) \leq v_{\p}(M_{23}(\textbf{a}))$.
 
Suppose that $v_{\p}(M_{12}(\textbf{a}))>v_{\p}(M_{23}(\textbf{a}))$. Then
\[b_{22} \geq v_{\p}(M_{12}(\textbf{a})) > v_{\p}(M_{23}(\textbf{a}))= b_{13}+b_{22}+\varepsilon_3.\]
Consequently,
\[0 > b_{13}+ \varepsilon_3, \]
a contradiction. 

\noindent
\textbf{Case~2:} Assume $\varepsilon_2=0$ and $v_{\p}(M_{23}(\textbf{a}))=b_{12}+b_{23}+\varepsilon_3$. Similar arguments as the ones for the former case show that, in the current case, we have \[v_{\p}(M_{13}(\textbf{a})) \leq v_{\p}(M_{23}(\textbf{a})).\]

\noindent
\textbf{Case~3:} Assume $\varepsilon_1=0$ and $v_{\p}(M_{23}(\textbf{a}))=b_{12}+b_{23}+\varepsilon_3$. 
If $\varepsilon_2=0$, we are back to Case~2, thus we may assume that $\varepsilon_2\neq 0$, in which case $b_{23}=b_{13}+b_{21}$. 
As a consequence, we must have $b_{23}>b_{21}$ and hence
\[v_{\p}(M_{12}(\textbf{a})) \leq b_{12}+b_{21} < b_{12}+b_{23}+\varepsilon_3=v_{\p}(M_{23}(\textbf{a})).\]
%Here we omitting $\varepsilon_3$ because $\varepsilon_3=\min\{\varepsilon_1,\varepsilon_2\}=0$. 

\noindent
\textbf{Case~4:} Assume $\varepsilon_2=0$ and $v_{\p}(M_{23}(\textbf{a}))=b_{13}+b_{22}+\varepsilon_3$. Again, we may assume that $\varepsilon_1 \neq 0$. Then, similar arguments as the ones for Case~3 show that %, in the current case, we have 
\[v_{\p}(M_{13}(\textbf{a})) \leq v_{\p}(M_{23}(\textbf{a})).\]

\noindent
\textbf{Case~5:} Assume $\varepsilon_1\neq 0$ and $\varepsilon_2 \neq 0$. 

In this case, we must have $b_{22}=b_{12}+b_{21}$ and $b_{23}=b_{13}+b_{21}$. Hence, $b_{22}-b_{12}=b_{21}=b_{23}-b_{13}$. 
As a consequence, %$b_{12}+b_{23}=b_{13}+b_{22}$, that is, 
\[v_{\p}(M_{23})=b_{12}+b_{23}+\varepsilon_3=b_{13}+b_{22}+\varepsilon_3.\]
If $\varepsilon_3 \geq \varepsilon_1$, then
\[v_{\p}(M_{12}(\textbf{a}))= b_{22}+\varepsilon_1 \leq b_{22}+\varepsilon_3 < b_{13}+b_{22}+\varepsilon_3=v_{\p}(M_{23}(\textbf{a})).\]
If $\varepsilon_3 \geq \varepsilon_2$, then 
\[v_{\p}(M_{13}(\textbf{a})) = b_{23}+\varepsilon_2 \leq b_{23}+\varepsilon_3 < b_{12}+b_{23}+\varepsilon_3=v_{\p}(M_{23}(\textbf{a})).\]
Thus, it suffices to show that $\varepsilon_3 \geq \min\{\varepsilon_1, \varepsilon_2\}$.

Let $u_{ij}$ denote the reduction of $a_{ij}$ modulo $\p^{b_{ij}}$. In particular, $v_{\p}(u_{ij})=0$. By definition
\begin{align*}\varepsilon_{i}&=v_{\p}((1+a_{11})u_{2(i+1)}-u_{1(i+1)}u_{21}), \text{ for }i=1,2, \text{ and }\\
\varepsilon_3&=v_{\p}(u_{12}u_{23}-u_{13}u_{22}).
\end{align*}
Since 
\begin{align*} u_{12}u_{23}-u_{13}u_{22} 	%& = u_{12}u_{23}-u_{12}u_{13}u_{21}+u_{12}u_{13}u_{21}-u_{13}u_{22}\\
																					& = u_{12}(u_{23}-u_{13}u_{21})-u_{13}(u_{22}-u_{12}u_{21})\\
																					& = u_{12}((1+a_{11})u_{23}-u_{13}u_{21})-u_{13}((1+a_{11})u_{22}-u_{12}u_{21})\\
																					&-a_{11}(u_{12}u_{23}-u_{13}u_{22}),\\
\end{align*}
it follows that \[\varepsilon_3\geq \min\{\varepsilon_1,\varepsilon_2, v_{\p}(a_{11})+\varepsilon_3\}.\] 
If $\min\{\varepsilon_1,\varepsilon_2, v_{\p}(a_{11})+\varepsilon_3\}=v_{\p}(a_{11})+\varepsilon_3$, we would obtain $v_{\p}(a_{11}) \leq 0$. Therefore, $\varepsilon_3\geq \min\{\varepsilon_1,\varepsilon_2\}$. 
\end{proof}

\begin{pps}\label{integral3} For complex variables $r$ and $s$, the following holds, provided the integral on the left-hand side converges absolutely.
 \begin{align*}&\int_{(y,\underline{x})\in\p\times \W_{2n}}|y|_{\p}^{r}\|\{M_{ij}(\underline{x})\mid~1\leq i<j\leq n\}\cup\{ y\}\|_{\p}^{s}d\mu\\
 &=\frac{(q^n-1)(1-q^{-1})q^{-r-2n-1}}{(1-q^{-1-r})(1-q^{-r-s-n})}\left((q+1)(1-q^{-r-n})q^{-s}+(q^{n}-q)(1-q^{-r-s-n})\right).
  \end{align*}
\end{pps}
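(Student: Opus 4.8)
The plan is to reduce the integral over $\p \times \W_{2n}$ to a sum of more tractable integrals by stratifying the domain according to which rows/columns of $M(\underline{x})$ have minimal valuation, and then to apply Lemma~\ref{Lemma:Minors} together with Propositions~\ref{integral1} and~\ref{integral2}. First I would decompose $\W_{2n}$ according to the valuation of the first row $(x_{11}, \dots, x_{1n})$: either $v_{\p}(x_{11}, \dots, x_{1n}) = 0$ or $v_{\p}(x_{11}, \dots, x_{1n}) \geq 1$. In the first case, up to a measure-preserving $\GL_n(\lri)$-action on the columns one may assume $x_{11} \in \W_1$ (a unit), so that after a change of variables $x_{11} \to 1 + x_{11}$ with $x_{11} \in \p$ one lands exactly in the setup of Lemma~\ref{Lemma:Minors}: the norm $\|\{M_{ij}(\underline{x})\} \cup \{y\}\|_{\p}$ collapses to $\|\{M_{1j}(\underline{x}) \mid j = 2, \dots, n\} \cup \{y\}\|_{\p}$. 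Since $M_{1j}(\underline{x}) = (1+x_{11})x_{2j} - x_{1j}x_{21}$ and $(1+x_{11})$ is a unit, a further affine change of variables in the $x_{2j}$ turns this into $\|x_{22}, \dots, x_{2n}, y\|_{\p}$ (with the remaining variables $x_{12}, \dots, x_{1n}, x_{21}$ integrated out freely over $\lri$, contributing measure~$1$ each, while $x_{11}$ ranges over $\p$). This brings the inner integral into the exact shape of the first identity in Proposition~\ref{integral2} with $n-1$ free variables.

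Next I would handle the complementary stratum where $v_{\p}(x_{11}, \dots, x_{1n}) \geq 1$, i.e. the whole first row lies in $\p^{(n)}$; combined with $\underline{x} \in \W_{2n}$ this forces $v_{\p}(x_{21}, \dots, x_{2n}) = 0$. By the symmetry of $M$ under swapping the two rows, the same Lemma~\ref{Lemma:Minors} argument (now with the roles of the rows exchanged) applies, and one again reduces to a Proposition~\ref{integral2}-type integral, this time with a factor $q^{-\text{(something)}}$ coming from the constraint that the first row lies in $\p^{(n)}$ — precisely the second identity in Proposition~\ref{integral2}. I would keep careful track of the measures of these strata: $\mu(\W_1 \times \lri^{n-1}) = 1 - q^{-1}$ inside the column chosen to carry the unit, and the count of such columns contributes the combinatorial factors $(q^n - 1)$, $(q+1)$, $(q^n - q)$ appearing in the claimed formula — these are the sizes of the relevant sets of lines/flags in $\F_q^n$.

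The main obstacle I expect is bookkeeping rather than conceptual: correctly accounting for the change-of-variables Jacobians (all units, hence measure-preserving) and, more delicately, ensuring that the $\GL_n(\lri)$-reduction used to place a unit in a chosen position does not over- or under-count — one must partition $\W_{2n}$ into genuinely disjoint pieces indexed by, say, the first column index $j$ at which a unit appears in the appropriate row, and multiply by the correct number $q^{j-1}$ or similar of configurations collapsing onto each representative. A clean way to organise this is to first prove the identity for $n = 1$ directly (where $M$ has no minors and the norm is just $|y|_{\p}$, so the statement degenerates and can be checked against Proposition~\ref{integral1}) and for $n = 2$ by hand, then run the general stratification; alternatively, one can substitute $u = \|x_{11}, \dots, x_{1n}\|_{\p}$ and integrate the "radial" variable explicitly using Proposition~\ref{integral1}. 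Once the two strata are evaluated as rational functions in $q, q^{-r}, q^{-s}$ via Proposition~\ref{integral2}, summing them and clearing denominators over the common factor $(1 - q^{-1-r})(1 - q^{-r-s-n})$ yields the stated closed form; verifying that the numerators combine into $(q+1)(1-q^{-r-n})q^{-s} + (q^n - q)(1 - q^{-r-s-n})$ times $(q^n-1)(1-q^{-1})q^{-r-2n-1}$ is then a finite algebraic check, which I would present as the concluding computation.
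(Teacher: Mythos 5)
Your stratification is genuinely different from the paper's: the paper partitions $\W_{2n}$ into the cosets $A_m+\Mat_{2\times n}(\p)$ according to the reduction of $\underline{x}$ modulo $\p$, splits these by rank, and the coefficients $(q+1)(q^n-1)$ and $q(q^n-1)(q^{n-1}-1)=(q^n-1)(q^n-q)$ appear directly as the numbers of rank-$1$ and rank-$2$ matrices in $\Mat_{2\times n}(\F_q)$; on each rank-$1$ coset all non-pivot entries lie in $\p$, so Lemma~\ref{Lemma:Minors} applies verbatim and the reduced integral is the \emph{second} identity of Proposition~\ref{integral2}, while rank-$2$ cosets contribute trivially via Proposition~\ref{integral1}. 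Your row-based decomposition (first row primitive versus first row in $\p^{(n)}$, refined by the first column carrying a unit) also works — I checked that $(1-q^{-n})J_1+q^{-1}(1-q^{-n})J_2$, with $J_1$, $J_2$ the two integrals of Proposition~\ref{integral2} in $n-1$ variables, does reproduce the stated closed form — and it has the mild advantage of replacing the rank count by a geometric series; the price is that the grouping into $(q+1)(\cdots)+(q^n-q)(\cdots)$ only emerges after algebraic rearrangement rather than from a direct count.

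Two steps in your write-up would fail as literally stated and need repair. First, in the stratum where the first row contains a unit, you are \emph{not} in the setup of Lemma~\ref{Lemma:Minors}: that lemma assumes every entry other than the $(1,1)$-entry lies in $\p$, whereas in your stratum the remaining entries are unconstrained (and the matrix may well have rank $2$ modulo $\p$). The collapse $\|\{M_{ij}\}\|_{\p}=\|\{M_{1j}\mid j\geq 2\}\|_{\p}$ is still true whenever $x_{11}$ is a unit, but you must prove it via the Plücker relation $x_{11}M_{ij}=x_{1i}M_{1j}-x_{1j}M_{1i}$ (essentially the identity used inside the paper's proof of the lemma), not by citing the lemma. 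Second, the substitution ``$x_{11}\to 1+x_{11}$ with $x_{11}\in\p$'' parametrises only the coset $1+\p$, of measure $q^{-1}$, not the full unit set $\W_1$, of measure $1-q^{-1}$; since the integrand depends on the pivot only through its being a unit, the correct factor is $1-q^{-1}$, and integrating the pivot over $\p$ as written would skew the constant. With these two points fixed, and the disjointness bookkeeping you already flag carried out, the computation closes as claimed.
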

\begin{proof}
Since $\lri= \bigcup_{m=1}^{q}(\pi_m+\p)$, for some representatives $\pi_m$ of the classes of $\lri/\p$, there exist $k\in \N$ and representatives $A_1, \dots, A_k$ of 
$\lri^{2n}/\p^{(2n)}$ such that 
\[\lri^{2n}=\left(\cup_{m=1}^{k}A_m+\Mat_{2\times n}(\p)\right)\cup\Mat_{2\times n}(\p),\]
where $\Mat_{2\times n}(\p)$ denotes the set of all $2\times n$-matrices over $\p$. Hence \[\W_{2n}=\cup_{m=1}^{k}A_m+\Mat_{2\times n}(\p).\]
In the following, we evaluate the integrals
\[\mathcal{I}_{A_m}(s,r):=\int_{(y,\underline{x}) \in \p\times A_m+\Mat_{2\times n}(\p)}|y|_{\p}^{r}\|\{M_{ij}(\underline{x})\mid 1\leq i<j\leq n\}\cup\{ y\}\|_{\p}^{s}d\mu.\]
If $\underline{x}\in A_m+\Mat_{2\times n}(\p)$, then $\underline{x}$ and $A_m$ have the same number of invertible elementary divisors modulo~$\p$. Let us then consider separately the case that $A_m$ has exactly one invertible elementary divisor and the case that $A_m$ has two invertible elementary divisors. 
For simplicity, assume that $A_1, \dots, A_t$ have exactly one invertible elementary divisor and that $A_{t+1}, \dots, A_k$ have two invertible elementary divisors for some $t \in [k]_0$.

\noindent
\textbf{Case 1}: Suppose that $m \in [t]$, that is, $A_m$ has exactly one elementary divisor modulo~$\p$. 
Then, each $\underline{x}=(x_{ij})\in A_m+\Mat_{2\times n}(\p)$ has exactly one elementary divisor modulo~$\p$. In particular,  
$v_{\p}(M_{ij}(\underline{x}))\geq 1$ for all $1\leq i< j\leq n$. 
By making a suitable change of variables, we can consider $A_m$ to be the matrix with $(1,1)$-entry $1$ and $0$ elsewhere.
Consequently, $M_{ij}(\underline{x})$ is as in Lemma~\ref{Lemma:Minors}, so that $\|\{M_{ij}(\underline{x})\mid{1\leq i < j \leq n}\}\|_{\p}=\|M_{12}(\underline{x}), \dots, M_{1n}(\underline{x})\|_{\p}$.
Therefore
\begin{align*}\mathcal{I}_{A_m}(s,r) 
 &=\int_{(y,\underline{x})\in \p\times \Mat_{2\times n}(\p)}|y|_{\p}^{r}\|M_{12}(\underline{x}),\dots, M_{1n}(\underline{x}), y\|_{\p}^{s}d\mu\\
 &=\mu(\p^{n+1})\int_{(y,x_1,\dots,x_{n-1}) \in \p\times\p^{n-1}}|y|_{\p}^{r}\|x_1,\dots, x_{n-1}, y\|_{\p}^{s}d\mu\\
 &=q^{-n-1}\frac{(1-q^{-1})(1-q^{-r-n})q^{-r-s-n}}{(1-q^{-r-s-n})(1-q^{-r-1})},
\end{align*}
where the domain of integration of the integral in the second equality is justified by the translation invariance of the Haar measure and the last equality is due to Proposition~\ref{integral2}.

\noindent
\textbf{Case 2}: We now assume that $m \in \{t+1, \dots, k\}$, that is, $A_m$ has two invertible elementary divisors modulo~$\p$. In this case, each $\underline{x} \in A_m+\Mat_{2\times n}(\p)$ has two elementary divisors modulo $\p$, which 
means that at least one of the $M_{ij}(\underline{x})$ has valuation zero. Consequently, 
\[\mathcal{I}_{A_m}(s,r)=\int_{(y,\underline{x})\in \p\times\p^{2n}}|y|_{\p}^{r}d\mu
=\frac{q^{-2n-r-1}(1-q^{-1})}{1-q^{-r-1}}.\]

We conclude the proof using the fact that there are $(q+1)(q^n-1)$ matrices of rank $1$ and $q(q^n-1)(q^{n-1}-1)$ matrices of rank $2$ in $\Mat_{2\times n}(\F_{q})$ and, consequently, 
\begin{align*}
 &\int_{(y,\underline{x})\in\p\times\W_{2n}}|y|_{\p}^{r}\|\{M_{ij}(\underline{x})\mid 1\leq i<j\leq n\}\cup{y}\|_{\p}^{s}d\mu
 =\sum_{m=1}^{k}\mathcal{I}_{A_m}(s,r)\\
 &=(q+1)(q^n-1)\mathcal{I}_{A_t}(s,r)+q(q^n-1)(q^{n-1}-1)\mathcal{I}_{A_k}(s,r)\\
 &=\frac{(q^n-1)(1-q^{-1})q^{-r-2n-1}}{(1-q^{-1-r})(1-q^{-r-s-n})}\left((q+1)(1-q^{-r-n})q^{-s}+(q^{n}-q)(1-q^{-r-s-n})\right),
\end{align*}
as desired. 
\qedhere
\end{proof}

\subsection{Bivariate zeta functions in terms of \texorpdfstring{$\p$}{p}-adic integrals}\label{ppspadic}
We now recall from~\cite[Proposition~4.8]{PL18} how to write local bivariate zeta functions in terms of $\p$-adic integrals.
Let $\g=\Lambda(\lri)=\Lambda\otimes_{\ri} \lri$. Let also $\g'$ be the derived Lie sublattice of $\g$, and let $\z$ be its centre. Set 
\begin{align*} & h=\rk(\g), & & a=\rk(\g/\z), & & b=\rk(\g'), & & r= \rk(\g/\g'), & & z=\rk(\z).\end{align*} 
For $\Lambda \in \{\Fnd,\Gn,\Hn\}$, these numbers are given in Table~\ref{table}.
\begin{table*}[h] \caption {Ranks related to groups of type $F$, $G$ and $H$.}\label{table} 
	\centering
		\begin{tabular}{ c | c | c | c}
			$\Lambda$ & $h=\rk(g)$ & $a=\rk(\g/\z)$ & $b=\rk(\g')=\rk(\z)=z$  \\[0.5ex]
				\hline 
				$\Fnd$ & $\binom{2n+\delta+1}{2}$ & $2n+\delta$ & $\binom{2n+\delta}{2}$ \Tstrut\Bstrut\\[0.1ex]
				
				$\Gn$ & $n^2+2n$ & $2n$ & $n^2$ \Tstrut\Bstrut\\[0.1ex]
				
				$\Hn$ & $\binom{n+1}{2}+2n$ & $2n$ & $\binom{n+1}{2}$ \\
		\end{tabular}
\end{table*}

We now recall the bases $\mathbf{e}$ and $\mathbf{f}$ of~\cite[Section~4.1]{PL18} in the context of the Lie lattice $\Lambda \in \{\Fnd,\Gn,\Hn\}$. Set $\mathbf{e}=(x_1, \dots, x_a)$, where the $x_i$ are the elements appearing in the presentation of~$\Lambda$ of Definition~\ref{fgh}. Let $\overline{\phantom{I}}$ denote the natural surjection $\g \to \g/\z$. Then $\overline{\mathbf{e}}:=(\overline{x_1}, \dots, \overline{x_a})$ is an $\lri$-basis of $\g/\z$. 

We define the sets 
\[\DL=\begin{cases}
                \{(i,j) \in [2n+\delta]^2\mid 1\leq i<j \leq 2n+\delta\},&\text{ if }\Lambda=\Fnd,\\
                [n]^2,&\text{ if }\Lambda=\Gn,\\
                \{(i,j) \in [n]^2\mid 1\leq i\leq j \leq n\},&\text{ if }\Lambda=\Hn.
               \end{cases}\]
Let $y_{ij}$ be the elements appearing in the relations of the presentation of~$\Lambda$ of Definition~\ref{fgh}. We order the set $\mathbf{f}=\{y_{ij}\}_{(i,j) \in \DL}$, by setting $y_{ij}>y_{kl}$, whenever either $i <k$ or $i=k$ and $j<l$.
We then write $\mathbf{f}=(y_{ij})_{(i,j)\in \DL}=(f_1,\dots, f_b)$ so that $f_1> \dots > f_b$. This ordering is fixed so that we have a unique map $\omega: \DL \to [b]$ satisfying $[x_i,x_j]=f_{\omega(i,j)}$, fact that is used in Section~\ref{scommutator}.

For $i,j \in [a]$ and $k \in [b]$, let $\lambda_{ij}^{k}\in \lri$ be the structure constants satisfying 
\[[x_i,x_j]=\sum_{k=1}^{b}\lambda_{ij}^{k}f_k.\]
\begin{dfn}\label{commutator}\cite[Definition~2.1]{ObVo15} The $A$\emph{-commutator} and the $B$\emph{-commutator matrices} of $\g$ with respect to $\mathbf{e}$ and $\mathbf{f}$ are, respectively,
\begin{align*}
 A_{\Lambda}(X_1, \dots, X_a)&=\left( \sum_{j=1}^{a}\lambda_{ij}^{k}X_j\right)_{ik}\in \Mat_{a\times b}(\lri[\underline{X}]),\text{ and} \\
 B_{\Lambda}(Y_1,\dots, Y_b)&=\left( \sum_{k=1}^{b}\lambda_{ij}^{k}Y_k\right)_{ij}\in \Mat_{a\times a}(\lri[\underline{Y}]),
\end{align*}
where $\underline{X}=(X_1, \dots, X_a)$ and $\underline{Y}=(Y_1, \dots, Y_b)$ are independent variables. 
\end{dfn}
The $B$-commutator matrices of $\Lambda$ are the following:
\begin{itemize}
 \item $B_{\Fnd}(\underline{Y})$ is the generic antisymmetric $(2n+\delta)\times (2n+\delta)$-matrix in the variables $\underline{Y}=(Y_1, \dots, Y_b)$,
 \item $B_{\Gn}(\underline{Y})= \left[ \begin {array}{cc} 0&M(\underline{Y})\\ \noalign{\medskip}-M(\underline{Y})^{\textup{tr}}&0\end {array}\right]$, where $M(\underline{Y})$ is
 the generic $n\times n$-matrix in the variables $\underline{Y}=(Y_1, \dots, Y_b)$ and $M(\underline{Y})^{\textup{tr}}$ is its transpose,
 \item $B_{\Hn}(\underline{Y})= \left[ \begin {array}{cc} 0&S(\underline{Y})\\ \noalign{\medskip}-S(\underline{Y})&0\end {array}\right]$, where $S(\underline{Y})$ is the 
 generic symmetric $n\times n$-matrix in the variables $\underline{Y}=(Y_1, \dots, Y_b)$.
\end{itemize}
The precise form of the $A$-commutator matrix of each $\Lambda \in \{\Fnd,\Gn,\Hn\}$ is given in Section~\ref{scommutator}.

We now recall how to express the bivariate zeta functions of Definition~\ref{biva} in terms of $\p$-adic integrals.
Let $n \in \N$ and let $\mathcal{R}(\underline{X})=\mathcal{R}(X_1, \dots, X_n)$ be a matrix of polynomials $\mathcal{R}(\underline{X})_{ij}\in\lri[\underline{X}]$. In the following, we express bivariate zeta functions in terms of the $\p$-adic integrals
\[\pint_{\mathcal{R}}(\underline{r},t)=\frac{1}{1-q^{-1}}\int_{(w,\underline{x}) \in \p\times \W_{n}}|w|_{\p}^{t}\prod_{k=1}^{u}
\frac{\|F_{k}(\mathcal{R}(\underline{x})) \cup wF_{k-1}(\mathcal{R}(\underline{x}))\|_{\p}^{r_k}}{\|F_{k-1}(\mathcal{R}(\underline{x}))\|_{\p}^{r_{k}}}d\mu,
\]
where $\mu$ is the additive Haar measure normalised so that $\mu(\lri^{n+1})=1$, $u \in \N$, and $F_j(\mathcal{R}(\underline{x}))$ is the set of nonzero $j \times j$-minors of $\mathcal{R}(\underline{x})$.

\begin{pps}\label{intpadic}\cite[Proposition~4.8]{PL18} The bivariate zeta functions of $\G(\lri)$ can be expressed as
\begin{align}
\nonumber \rlvlzf_{\G(\lri)}(s_1,s_2)&=\frac{1}{1-q^{r-s_2}}\left(1+\pint_{B_\Lambda}\left(\tfrac{2-s_1}{2}\mathbf{1}_{\ub},\ub s_1+s_2+2\ub-h-1\right)\right),\\
%&\left.\int_{(w,\underline{y}) \in \p\times\W_{b}}\hspace{-0.5cm}|w|_{\p}^{\ub s_1+s_2+2\ub-h-1}\prod_{k=1}^{\ub}
%\frac{\|F_{2k}(\commB(\underline{y})) \cup w^2F_{2(k-1)}(\commB(\underline{y}))\|_{\p}^{\frac{-2-s_1}{2}}}{\|F_{2(k-1)}(\commB(\underline{y}))\|_{\p}^{\frac{-2-s_1}{2}}}d\mu\right), 
%\nonumber 
 %\end{align}
 %\begin{align}
\label{ccpadic} \clvlzf_{\G(\lri)}(s_1,s_2)&=\frac{1}{1-q^{z-s_2}}\left(1+\pint_{A_\Lambda}\left((1-s_1)\mathbf{1}_{\ua},\ua s_1+s_2+\ua-h-1\right)\right),
%&\left. \int_{(w,\underline{x}) \in \p\times\W_{a}}\hspace{-0.2cm}|w|_{\p}^{\ua s_1+s_2+\ua-h-1}\prod_{k=1}^{\ua}
%\frac{\|F_{k}(\commA(\underline{x})) \cup wF_{k-1}(\commA(\underline{x}))\|_{\p}^{-1-s_1}}{\|F_{k-1}(\commA(\underline{x}))\|_{\p}^{-1-s_1}}d\mu\right),\nonumber
\end{align}
where, for $u \in \N$, we denote $\mathbf{1}_u=(1, \dots, 1) \in \Z^u$ and  %$F_j(\mathcal{R}(\underline{z}))$ denotes the set of $j\times j$-minors of $\mathcal{R}(\underline{z})\in\{\commA(\underline{x}),\commB(\underline{y})\}$, 
\begin{align*}2\ub&=\max\{\rk_{\Frac(\lri)}B_\Lambda(\mathbf{z}) \mid \mathbf{z}\in \lri^b\},\\
\ua&=\max\{\rk_{\Frac(\lri)}A_\Lambda(\mathbf{z}) \mid \mathbf{z}\in \lri^a\}.
\end{align*}
%$\mathbf{1}_{\ub}=(1, \dots, 1) \in \Z^{\ub}$, and $\mathbf{1}_{\ua}=(1, \dots, 1) \in \Z^{\ua}$.
\end{pps}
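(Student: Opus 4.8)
The statement is \cite[Proposition~4.8]{PL18}; we outline the route to it. The plan is to descend from the finite principal congruence quotients $\G_N=\G(\lrip)$ to the $\p$-adic integrals $\pint_B$ and $\pint_A$ in two steps. First, rewrite $\rzeta_{\G_N}(s_1)$, resp. $\cczeta_{\G_N}(s_1)$, as a sum indexed by the dual of a suitable abelian section of $\g/\p^N\g$, whose summands depend on the indexing variable only through the elementary-divisor type, modulo $\p^N$, of the commutator matrix $B_\Lambda$, resp. $A_\Lambda$, evaluated at a lift. Second, reorganise the double sum --- over the level $N$ and over the indexing variable --- as a $\p$-adic integral over $\p\times\W_b$, resp. $\p\times\W_a$, with the auxiliary variable $w\in\p$ recording the residual congruence depth.

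For the representation identity, one uses the orbit method for finite groups of class~$2$ in the form used in \cite{StVo14}: the irreducible characters of $\G_N$ lying over a fixed central character are parametrised by the cosets of the image of the alternating form $B_\Lambda(\mathbf{y})$ on $(\g/\z)\otimes\lri/\p^N$, where $\mathbf{y}\in\lri^b$ is a lift of the restriction of that character to $\g'$; both their number and their common degree are functions of the elementary-divisor type of $B_\Lambda(\mathbf{y})$ modulo $\p^N$ alone. Assembling $\rzeta_{\G_N}(s_1)$ from this, multiplying by $|\lri:\p|^{-Ns_2}$ and summing over $N\geq 0$, the contribution of the $1$-dimensional characters --- of which there are $|\lri:\p|^{Nr}$ at level $N$, with $r=\rk(\g/\g')$ --- together with the degenerate stratum $B_\Lambda(\mathbf{y})\equiv 0$, sums in closed form to the prefactor $(1-q^{r-s_2})^{-1}$ and the summand $1$ in the bracket. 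The conjugacy-class identity is obtained from the adjoint action: conjugacy classes of $\G_N$ of a given size are parametrised through $A_\Lambda(\mathbf{x})$, the matrix of $\ad(\mathbf{x})\colon\g/\z\to\g'$, with $\mathbf{x}\in\lri^a$, their sizes being indices of centralisers, hence powers of $q$ governed by $\rk(A_\Lambda(\mathbf{x})\bmod\p^N)$; here the size-$1$ classes are the $|\lri:\p|^{Nz}$ central ones, with $z=\rk(\z)$, which accounts for the prefactor $(1-q^{z-s_2})^{-1}$.

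To pass to the integral, group the lifts $\mathbf{y}\in\lri^b$ (resp. $\mathbf{x}\in\lri^a$) by their valuation $c$ and by their primitive part in $\W_b$ (resp. $\W_a$), and let $v_\p(w)$ track the residual depth; this replaces $\sum_N\sum_{\mathbf{y}}$ by $\tfrac{1}{1-q^{-1}}\int_{(w,\underline{x})\in\p\times\W_b}(\cdots)\,d\mu$. Because $\rk(B_\Lambda(\mathbf{y})\bmod\p^N)$ and the finer elementary-divisor data are read off from the valuations of the minors $F_k(B_\Lambda(\underline{x}))$ relative to $|w|_\p$, the integrand telescopes into $\prod_{k=1}^{\ub}\|F_k(B_\Lambda(\underline{x}))\cup wF_{k-1}(B_\Lambda(\underline{x}))\|_\p^{r_k}/\|F_{k-1}(B_\Lambda(\underline{x}))\|_\p^{r_k}$ with constant weight $r_k=(2-s_1)/2$ --- the numerator $2$ absorbing the multiplicity of characters over each $\mathbf{y}$, the denominator $2$ the square root in the degrees --- and the accumulated powers of $q=|\lri:\p|$ from the degrees (weighted by $-s_1$), from $|\lri:\p|^{-Ns_2}$, and from the count of primitive lifts at each valuation, combine into the single exponent $\ub s_1+s_2+2\ub-h-1$ on $|w|_\p$. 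This yields the first formula; the second follows identically with $A_\Lambda$ in place of $B_\Lambda$, $\ua$ in place of $2\ub$, $z$ in place of $r$, and the scalar $(1-s_1)$ in place of $(2-s_1)/2$, the absence of a square root reflecting that class sizes, not their square roots, enter.

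The main obstacle is the bookkeeping in both steps: showing that the degrees and multiplicities over a fixed $\mathbf{y}$ (resp. the centraliser indices) depend on $B_\Lambda(\mathbf{y})$ (resp. $A_\Lambda(\mathbf{x})$) only through its elementary-divisor type modulo $\p^N$, and then tracking every power of $q$ --- from degrees or class sizes weighted by $-s_1$, from the weight $|\lri:\p|^{-Ns_2}$, from the number of primitive lifts at each valuation, and from the Haar measure of each elementary-divisor stratum --- so that they coalesce into precisely $\ub s_1+s_2+2\ub-h-1$, resp. $\ua s_1+s_2+\ua-h-1$. The measure computation of the strata, together with the normalisation $\mu(\lri^{n+1})=1$ and the factor $(1-q^{-1})^{-1}$ built into $\pint_{\mathcal{R}}$, is what makes the discrete sum and the $\p$-adic integral agree on the nose.
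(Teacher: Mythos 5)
The paper does not prove this proposition here --- it is imported verbatim from \cite[Proposition~4.8]{PL18} --- so the only comparison available is with the argument of that reference, and your outline follows essentially the same route: Kirillov-orbit parametrisation of $\Irr(\G_N)$ via the elementary-divisor type of $B_\Lambda(\mathbf{y})$ and adjoint-orbit parametrisation of conjugacy classes via $A_\Lambda(\mathbf{x})$, the closed-form summation of the linear characters (resp.\ central classes) into the prefactors $(1-q^{r-s_2})^{-1}$ and $(1-q^{z-s_2})^{-1}$, and the conversion of the double sum over $N$ and lifts into an integral over $\p\times\W_b$ (resp.\ $\p\times\W_a$) with the minor quotients encoding the elementary-divisor strata. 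The bookkeeping you defer (that degrees, multiplicities and centraliser indices depend only on the elementary-divisor type, and that the powers of $q$ coalesce into the stated exponents) is precisely what is carried out in \cite{PL18}, and your sketch is consistent with the discrete precursor \eqref{rint} that the present paper quotes in Section~\ref{prlvl}.
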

In particular, $\ub=n$, for all $\Lambda \in \{\Fnd,\Gn,\Hn\}$. 

Via specialisation~\eqref{specialisation}, we obtain:
\begin{align}\nonumber\kzeta_{\G(\lri)}(s)&=
\frac{1}{1-q^{z-s}}\left(1+\pint_{A_\Lambda}\left(-\mathbf{1}_{\ua},s+\ua-h-1\right)\right)\\
&\label{rtok}=\frac{1}{1-q^{r-s}}\left(1+\pint_{B_\Lambda}\left(-\mathbf{1}_{\ub},s+2\ub-h-1\right)\right).
\end{align}

\begin{rmk}\label{sod}
%For Rossmann, $K$ denotes a local field which can have positive characteristic. We only consider number fields, ie, $char(K)=0$.
Formula~\eqref{kzetaF} for the class number zeta function of $F_{n,\delta}(\lri)$ is also a consequence of~\cite[Proposition~5.11]{ro17}. 
This proposition gives a formula for the \emph{ask zeta function} $\textup{Z}_{\mathfrak{so}_{d}(\lri)}^{\textup{ask}}(T)$ of the orthogonal Lie algebra $\mathfrak{so}_d(\lri)$, $d \in \N$; see definition in~\cite[Definition~1.3]{ro17}. 
Since $\{B_{\Fnd}(\underline{x}) \mid \underline{x} \in \lri^b\}=\mathfrak{so}_a(\lri)$, when comparing the $p$-adic integral~\cite[(4.3)]{ro17} with~\eqref{rtok}, we see that 
\[\kzeta_{F_{n,\delta}(\lri)}(s)=\textup{Z}_{\mathfrak{so}_a(\lri)}^{\textup{ask}}(q^{-s+\binom{a}{2}}),\] and hence 
\cite[Proposition~5.11]{ro17} shows~\eqref{kzetaF}.
\end{rmk}

\section{Bivariate conjugacy class zeta functions---proof of Theorem~\ref{cclvl}}\label{pcclvl}
The goal of this section is to prove Theorem~\ref{cclvl}. This is done by calculating explicitly the corresponding integrals~\eqref{ccpadic} of Proposition~\ref{intpadic} for each type of $\G\in\{F_{n,\delta},G_n,H_n\}$. For this purpose, in Section~\ref{scommutator}, we describe the $A$-commutator matrices for each type of $\G$ and then, in Sections~\ref{typeF}, \ref{typeG}, and~\ref{typeH}, we use the obtained descriptions to determine the terms 
\begin{equation}\label{terms}\prod_{k=1}^{\ua}
\frac{\|F_{k}(\commA(\underline{x})) \cup wF_{k-1}(\commA(\underline{x}))\|_{\p}^{-1-s_1}}{\|F_{k-1}(\commA(\underline{x}))\|_{\p}^{-1-s_1}}
\end{equation}
appearing in the integrands of the $\p$-adic integrals~\eqref{ccpadic}, and then we evaluate the corresponding integral.

\subsection{\texorpdfstring{$A$}{A}-commutator matrices}\label{scommutator}
In this section, we describe the $A$-commutator matrices of each type of~$\G$. 
Recall that the $A$-commutator matrix of $\g=\Lambda(\lri)$ is given with respect to the bases $\mathbf{e}=(x_1, \dots, x_a)$ and $\mathbf{f}=(y_{ij})_{(i,j) \in \DL}$ of Section~\ref{ppspadic}. For simplicity, we would like to write the ordered basis~$\mathbf{f}$ in the form $\mathbf{f}=(f_1, \dots, f_b)$. The following lemma gives a way to determine which $y_{ij}$ corresponds to each $f_k$.

\begin{lem}Let $\wL: \DL\to [b]$ denote the map satisfying $y_{ij}=f_{\wL(i,j)}$. Then 
\[\wL(i,j)=\begin{cases}
               (i-1)a-\binom{i+1}{2}+j,&\text{ if }\Lambda=\Fnd,\\
               (i-1)n+j,&\text{ if }\Lambda=\Gn,\\
               (i-1)n-\binom{i}{2}+j,&\text{ if }\Lambda=\Hn.
              \end{cases}\] 
\end{lem}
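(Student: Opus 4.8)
The plan is to verify the claimed closed form for $\wL(i,j)$ directly from the definition of the ordering on $\mathbf{f}$, separately for each of the three Lie lattices, exploiting that in all cases $\mathbf{f}$ is ordered by the rule $y_{ij} > y_{kl}$ iff $i<k$, or $i=k$ and $j<l$. Since $\wL$ is characterised by $y_{ij} = f_{\wL(i,j)}$ with $f_1 > f_2 > \cdots > f_b$, the value $\wL(i,j)$ is exactly one more than the number of pairs $(k,l) \in \DL$ with $y_{kl} > y_{ij}$, i.e.
\[
\wL(i,j) = 1 + \#\{(k,l) \in \DL \mid k < i\} + \#\{(k,l) \in \DL \mid k = i,\ l < j\}.
\]
So the whole proof reduces to counting, in each case, the number of index pairs in $\DL$ lying in ``earlier rows'' together with those in the same row $i$ but in an ``earlier column.''

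First I would handle $\Lambda = \Gn$, which is the cleanest: here $\DL = [n]^2$, so each row $k$ contributes exactly $n$ pairs, the rows $k < i$ contribute $(i-1)n$ pairs, and within row $i$ the columns $l < j$ contribute $j-1$ pairs; adding $1$ gives $\wL(i,j) = (i-1)n + j$, as claimed. Next I would do $\Lambda = \Fnd$, where $\DL = \{(k,l) : 1 \le k < l \le 2n+\delta\}$ and $a = 2n+\delta$; row $k$ now contains the pairs $(k, k+1), \dots, (k, a)$, i.e. $a - k$ of them, so the rows $1, \dots, i-1$ contribute $\sum_{k=1}^{i-1}(a-k) = (i-1)a - \binom{i}{2}$ pairs, and within row $i$ the columns $l$ with $i < l < j$ contribute $j - i - 1$ pairs; summing with $1$ gives $(i-1)a - \binom{i}{2} + j - i = (i-1)a - \binom{i+1}{2} + j$, using $\binom{i}{2} + i = \binom{i+1}{2}$. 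Finally, for $\Lambda = \Hn$, where $\DL = \{(k,l) : 1 \le k \le l \le n\}$, row $k$ contains $(k, k), (k, k+1), \dots, (k, n)$, i.e. $n - k + 1$ pairs, so rows $1, \dots, i-1$ contribute $\sum_{k=1}^{i-1}(n - k + 1) = (i-1)n - \binom{i-1}{2} + (i-1) = (i-1)n - \binom{i}{2} + (i-1)$; wait — more carefully, $\sum_{k=1}^{i-1}(n-k+1) = (i-1)(n+1) - \binom{i}{2}$, and within row $i$ the columns $l$ with $i \le l < j$ contribute $j - i$ pairs, so $\wL(i,j) = (i-1)(n+1) - \binom{i}{2} + j - i + 1 = (i-1)n - \binom{i}{2} + j$, using $(i-1)(n+1) - i + 1 = (i-1)n$.

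The argument is entirely elementary, so there is no real obstacle; the only thing requiring care is bookkeeping with the binomial identities $\binom{i}{2} + i = \binom{i+1}{2}$ and the precise lower limits of the column ranges (strict vs.\ non-strict inequalities in $\DL$), which differ between the three cases and which govern whether the in-row count is $j-1$, $j-i-1$, or $j-i$. I would present the three computations as a short case analysis, in each case first stating the size of row $k$ in $\DL$, then summing over earlier rows, then adding the in-row contribution and the final $+1$, and simplifying to the stated formula.
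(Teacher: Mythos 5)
Your proof is correct. The identity you start from, namely that $\wL(i,j)$ equals one plus the number of pairs $(k,l)\in\DL$ with $y_{kl}>y_{ij}$ (those in earlier rows plus those in row $i$ with $l<j$), is exactly the right reformulation of $f_1>\dots>f_b$, and your three counts check out: $(i-1)n+(j-1)+1$ for $\Gn$; $\bigl((i-1)a-\binom{i}{2}\bigr)+(j-i-1)+1=(i-1)a-\binom{i+1}{2}+j$ for $\Fnd$; and $\bigl((i-1)(n+1)-\binom{i}{2}\bigr)+(j-i)+1=(i-1)n-\binom{i}{2}+j$ for $\Hn$. The route differs slightly from the paper's: the paper proceeds by induction along the successor relation of the total order (base case $\omega_{\Fnd}(1,j)=j-1$, then the step $\omega(i,j)\mapsto\omega(i,j+1)$ within a row and the jump $\omega(i,a)\mapsto\omega(i+1,i+2)$ between rows), and it only carries this out for $\Fnd$, leaving $\Gn$ and $\Hn$ to ``similar arguments.'' Your direct rank-counting argument avoids the induction entirely and treats all three lattices uniformly and explicitly, which is arguably a cleaner and more complete presentation; the paper's inductive version has the minor advantage of making the successor structure of the ordering visible, which is not needed elsewhere. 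Either way the content is the same elementary bookkeeping, and your handling of the strict versus non-strict inequalities in the three versions of $\DL$ is exactly the point that needs care.
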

\begin{proof}
Let us show the identity for $\Lambda=\Fnd$. Recall that the ordering on the $y_{ij}$ is given by $y_{ij}>y_{kl}$ whenever $i<k$ or $i=k$ and $j<l$. In particular, we have that 
\[ 	y_{12}>y_{13}> \dots > y_{1n}. \]
In other words, $y_{12}$ corresponds to $f_1$, that is $\omega_{\Fnd}(1,2)=1$, and $y_{1j}$ corresponds to $f_{j-1}$:
\[\omega_{\Fnd}(1,j)=j-1,\text{ for all }j \in \{2, \dots, a\}.\]

Let us now show the induction step. Suppose that for some $1 \leq i < j \leq a$ it holds that $\omega_{\Fnd}(i,j)=(i-1)a-\binom{i+1}{2}+j$.
Then, if $j \neq a$, we have
\[\omega_{\Fnd}(i,j+1)=\omega_{\Fnd}(i,j)+1=(i-1)a-\tbinom{i+1}{2}+j+1, \]
as desired. We now assume $j=a$. By definition of $>$, the element $y_{(i+1)(i+2)}$ is the successor of $y_{ia}$, for each $i\in[n-2]$.
In other words,
\[\omega_{\Fnd}(i+1,i+2)=\omega_{\Fnd}(i,a)+1.\]
Thus
\begin{align*}\omega_{\Fnd}(i+1,i+2)&=\omega_{\Fnd}(i,a)+1=(i-1)a-\tbinom{i+1}{2}+a+1 \\
&= ia-\left(\tbinom{i+2}{2}-(i+1)\right)+1\\
&= ia - \tbinom{i+2}{2}+i+2.	
\end{align*}

The other cases follow from similar arguments.
\end{proof}

We now divide the $A$-commutator matrix of~$\g$ into submatrices and describe the entries of these submatrices. 
Denote by $\commGenm$ the submatrix of~$\commGen$ composed of all columns of~$\commGen$ with index in 
\begin{align*}\CLm&=\{\wL(m,j)\mid (m,j)\in \DL\} \\
																			&=\begin{cases}
																			\{\omega_{\Fnd}(m,j)\mid j \in \{m+1, m+2, \dots, a\}\},&\text{ if }\Lambda=\Fnd,\\
																			\{\omega_{\Gn}(m,j) \mid j \in [n]\},& \text{ if }\Lambda=\Gn,\\
																			\{\omega_{\Hn}(m,j) \mid j \in \{m, m+1, \dots, n\}\}, & \text{ if }\Lambda=\Hn.
																			\end{cases}
\end{align*}

Let $\underline{X}=(X_1, \dots, X_a)$ be a vector of variables, we have that 
\begin{align*}
 \commF&=\left[ \begin {array}{cccc} A_{\Fnd}^{(1)}(\underline{X})&A_{\Fnd}^{(2)}(\underline{X})&\dots&A_{\Fnd}^{(a-1)}(\underline{X}) \end {array} \right],\nonumber\\
 \commGen&=\left[ \begin {array}{cccc} A_{\Lambda}^{(1)}(\underline{X})&A_{\Lambda}^{(2)}(\underline{X})&\dots&A_{\Lambda}^{(n)}(\underline{X}) \end {array} \right], \text{ for }\Lambda\in \{\Gn,\Hn\}.
\end{align*}

We define numbers~$\vLm$ so that we can simply write \[\CLm=\{\vLm+1, \vLm+2, \dots, \vLm +\kLm\},\] for some $\kLm \in \N$. 
This way, the $j$th column of $\commGenm$ is the $(\vLm+j)$th column of $\commGen$.
These numbers are defined as follows. 
\begin{align*}\vLm &=\begin{cases}
               %(m-1)a-\binom{m}{2},&\text{ if }\Lambda=\Fnd\\
							(m-1)a-\binom{m+1}{2}+m,&\text{ if }\Lambda=\Fnd\\
               (m-1)n,&\text{ if }\Lambda=\Gn,\\
              %(m-1)n-\binom{m-1}{2},&\text{ if }\Lambda=\Hn,
							 (m-1)n-\binom{m}{2}+m-1,&\text{ if }\Lambda=\Hn, \text{ and }
              \end{cases}\\
\kLm&=\begin{cases}
     a-m,&\text{ if }\Lambda=\Fnd,\\
     n,&\text{ if }\Lambda=\Gn,\\
     n-m+1,&\text{ if }\Lambda=\Hn.
 \end{cases}
\end{align*}

In the following, we describe separately the submatrices~$\commGenm$ of~$\commGen$ for each $\Lambda \in \{\Fnd,\Gn, \Hn\}$. 
For simplicity, in the next sections we write~$\omega$ and~$\vm$ for $\omega_{\Lambda}$ and $\vLm$, respectively. 

\subsubsection{\texorpdfstring{$A$}{A}-commutator matrices of groups of type \texorpdfstring{$F$}{F}}
In this section, we fix $\Lambda=\Fnd$ and determine the entries of~$\commFm$.

Fix $m \in [a-1]$. Recall that the $(i,l)$-entry of $\commGenm$ is the $(i,\vm+l)$-entry of $\commGen$, i.e.\ for $i \in [a]$ and $l \in [\kFm]$, we have that
 \[\commGenm_{il}=\commGen_{i(\vm+l)}=\sum_{j=1}^{a}\lambda_{ij}^{\vm+l}X_j.\]
Thus, we need to calculate the values of $\lambda_{ij}^{\vm+l}$ in the sum above in order to determine the entries of $\commGenm$.
Recall that the $\lambda_{ij}^{k}$ are defined by 
\[ [e_i, e_j] = \sum_{k=1}^{b} \lambda_{ij}^{k}f_{k},\] 
and that we have defined $\omega(i,j)$ so that $f_{\omega(i,j)}=y_{ij}$.

For $\Lambda=\Fnd$, the sturcture constants~$\lambda_{ij}^k$ are determined by the relations
\[[x_i,x_j]=y_{ij}, \text{ for all }1 \leq i < j \leq a.\] 
Hence $\lambda_{ij}^{k}=1$ if and only if $k=\omega(i,j)$. 

In the following, we fix $i \in [a]$, $l \in [\kFm]$ and let~$j$ run in~$[a]$.

\noindent
\textbf{Case $\mathbf{i=m}$.} In this case we have
		\[ \lambda_{ij}^{\vm+l}=	\begin{cases}
																1, &\text{ if }\vm+l=\omega(m,j)\\
																0, &\text{ otherwise.}
														\end{cases}\]
		Since $\vm+l=\omega(m,j)$ if and only if $j=m+l$, it follows that 
		\[\commFm_{ml}=X_{m+l}.\]

\noindent		
\textbf{Case $\mathbf{i=m+l}$.} Observe that, if $j\geq i$, then $\lambda_{ij}^{\vm+l}$ can only be nonzero if $\vm+l=\omega(i,j)$. However, this equality is only possible for $i=m$. Consequently, 
		\[\lambda_{ij}^{\vm+l}=-\lambda_{ji}^{\vm+l}=	\begin{cases}
																										-1, &\text{ if }j<i\text{ and }\vm+l=\omega(j,i),\\
																										0, &\text{ otherwise}.
																									\end{cases}\]
	Since $\vm+l=\omega(j,i)$ if and only if $j=m$, it follows that 
	\[\commFm_{(m+l)l}=-X_m.\]

\noindent	
\textbf{Case $\mathbf{i\neq m}$ and $\mathbf{i \neq m+l}$.} In this case, we find that $\lambda_{ij}^{\vm+l}=0$ for all $j \in [a]$. Hence
	\[\commFm_{il}=0.\]

Given $s,r \in \N$, let $\textbf{0}_{s\times r}$ denote the $s\times r$-zero matrix and let $\textbf{1}_{s}$ denote the $s\times s$-identity matrix, both over 
$\lri[\underline{X}]$. It follows that, for each $m\in[a-1]$, 
\begin{align}\label{commF}
 \commFm & = \begin{bmatrix}
 \multicolumn{4}{c}{\multirow{2}{*}{{\Large \textbf{0}}$_{(m-1)\times (2n+\delta-m)}$}}\\
  & & & \\
     X_{m+1} & X_{m+2} & \dots & X_{2n+\delta}  \\
\multicolumn{4}{c}{\multirow{2}{*}{$-X_m$ {\Large \textbf{1}}$_{(2n+\delta-m)}$}} \\
& & & 
\end{bmatrix}
\in \Mat_{(2n+\delta)\times (2n+\delta-m)}(\lri[\underline{X}]).
\end{align}

\subsubsection{\texorpdfstring{$A$}{A}-commutator matrices of groups of type \texorpdfstring{$G$}{G}}
In this section, we fix $\Lambda=\Gn$.
 
For each $m \in [n]$, we determine the entries of $\commGm$, which are of the form 
\[\commGm_{il}= \sum_{j=1}^{2n}\lambda_{ij}^{\vm+l}X_j,\]
for $i \in [2n]$ and $l \in [\kGm]$.
It suffices to determine the values of $\lambda_{ij}^{\vm+l}$ in the sum above. 
For $\Lambda=\Gn$, the structure constants~$\lambda_{ij}^{k}$ are determined by the relations
\[ [x_i, x_{n+j}]=y_{ij}=f_{\omega(i,j)}, ~1 \leq i, j \leq n. \]
Thus, given $i, j \in [n]$ and $k \in [\kGm]$, we have that 
\begin{align*} \lambda_{ij}^{k}&=\lambda_{(n+i)(n+j)}^{k}=0, \\ 
								\lambda_{i(n+j)}^{k}&=1 \text{ if and only if }\omega(i,j)=k.
\end{align*}
In the following, we fix $i \in [2n]$, $l \in [\kGm]$ and let~$j$ run in~$[n]$.

\noindent
\textbf{Case $\mathbf{i=m}$.} In this case we have
		\[ \lambda_{i(n+j)}^{\vm+l}=	\begin{cases}
																1, &\text{ if }\vm+l=\omega(m,j)\\
																0, &\text{ otherwise.}
														\end{cases}\]
		Since $\vm+l=\omega(m,j)$ if and only if $j=l$, it follows that 
		\[\commGm_{ml}=\sum_{j=1}^{n}\lambda_{i(n+j)}^{\vm+l}X_{n+j}=X_{n+l}.\]

\noindent
\textbf{Case $\mathbf{i=n+l}$.} In this case
		\[\lambda_{ij}^{\vm+l}=-\lambda_{j(n+l)}^{\vm+l}=	\begin{cases}
														-1, &\text{ if }\vm+l=\omega(j,l),\\
														0, &\text{ otherwise}.
		\end{cases}\]

	As $\vm+l=\omega(j,l)$ if and only if $j=m$, it follows that 
	\[\commGm_{(n+l)l}=-X_m.\]
	
\noindent
\textbf{Case $\mathbf{i\in [2n]\setminus \{m,n+l\}}$.} In this case, we find that $\lambda_{ij}^{\vm+l}=0$ for all $j \in [2n]$. Hence
	\[\commGm_{il}=0.\]

Therefore, for each $m \in [n]$,
\begin{align}
\label{commG}\commGm&=\begin{bmatrix}
			\multicolumn{4}{c}{\multirow{2}{*}{{\Large \textbf{0}}$_{(m-1)\times n}$}} \\
			&  &  &  \\
			X_{n+1} & X_{n+2} & \dots & X_{2n}  \\
			\multicolumn{4}{c}{\multirow{2}{*}{{\Large \textbf{0}}$_{(n-m)\times n}$}}\\
			&&&\\
			\multicolumn{4}{c}{\multirow{2}{*}{$-X_m${\Large \textbf{1}}$_{n}$}}\\
			&&&
\end{bmatrix}
\in \Mat_{2n\times n}(\lri[\underline{X}]).
 \end{align}

\subsubsection{\texorpdfstring{$A$}{A}-commutator matrices of groups of type \texorpdfstring{$H$}{H}}
In this section, we fix $\Lambda=\Hn$.
 
For each $m \in [n]$, we determine the entries of $\commHm$, which are of the form 
\[\commHm_{il}= \sum_{j=1}^{2n}\lambda_{ij}^{\vm+l}X_j,\]
for $i \in [2n]$ and $l \in [\kHm]$.
Let us determine the values of $\lambda_{ij}^{\vm+l}$ in the sum above. 
For $\Lambda=\Hn$, the structure constants~$\lambda_{ij}^{k}$ are determined by the relations
\[ [x_i, x_{n+j}]=[x_j,x_{n+i}]=y_{ij}=f_{\omega(i,j)}, ~1 \leq i \leq j \leq n. \]
Thus, given $i, j \in [n]$ and $k \in [\kHm]$, we have that 
\begin{align*}\lambda_{ij}^{k}&			=\lambda_{(n+i)(n+j)}^{k}=0, \\ 
							\lambda_{i(n+j)}^{k}&	=\lambda_{j(n+i)}^{k}\text{ are nonzero if and only if }\omega(i,j)=k.
\end{align*}

In the following, we fix $i \in [2n]$, $l \in [\kHm]$, and let~$j$ run in~$[n]$.

\noindent
\textbf{Case $\mathbf{i=m}$.} 
				Observe that if $j<m$, then $\lambda_{i(n+j)}^{\vm+l}=0$ because $\omega(j,m)=\vm+l$ is only possible if $j=m$. 
				Consequently,
				\[ \lambda_{i(n+j)}^{\vm+l}=\begin{cases}
																1, &\text{ if }j \geq m\text{ and }\vm+l=\omega(m,j),\\
																0, &\text{ otherwise.}
														\end{cases}\]
				%If $j<m$, then $\lambda_{j(n+m)}^{\vm+l}=0$, because for $\omega(j,m)$ to be $\vm+l$ we must have~$j=m$. 
				As $\vm+l=\omega(m,j)$ if and only if $j=m+l-1$, it follows that 
				\[\commHm_{ml}=X_{n+m+l-1}.\]	

\noindent
\textbf{Case $\mathbf{i=m+l-1}$ with $\mathbf{l \neq 1}$.} 
		Observe that if $j >i$, then $\lambda_{i(n+j)}^{\vm+l}=0$ because $\omega(i,j)=\vm+l$ would only be possible if $i=m$. Consequently, 
				\[ \lambda_{i(n+j)}^{\vm+l}=	\begin{cases}
																1, &\text{ if }j \leq i \text{ and }\vm+l=\omega(j,i),\\
																0, &\text{ otherwise.}
														\end{cases}\]
				As $\vm+l=\omega(j,m+l-1)$ if and only if $j=m$, it follows that 
				\[\commHm_{(m+l-1)l}=	X_{n+m}.\]

\noindent
		\textbf{ Case $\mathbf{i=n+m}$.} Similarly to the case $i=m$, if $j<m$, then $\lambda_{(n+m)j}^{\vm+l}=0$.
				Consequently,
					\[\lambda_{ij}^{\vm+l}=-\lambda_{j(n+m)}^{\vm+l}=	\begin{cases}
														-1, &\text{ if }j \geq m\text{ and }\vm+l=\omega(m,j),\\
														0, &\text{ otherwise}.
														\end{cases}\]
				As $\vm+l=\omega(m,j)$ if and only if $j=m+l-1$, it follows that 
				\[\commHm_{(n+m)l}=-X_{m+l-1}.\]

\noindent
\textbf{Case $\mathbf{i = n+m+l-1}$ with $\mathbf{l \neq 1}$.} 
				Similarly to the case $i=m+l-1$, if $j > m+l-1 >m$, then $\lambda_{(n+m+l-1)j}^{\vm+l}=0$. Consequently,
				\[ \lambda_{ij}^{\vm+l}=-\lambda_{j(n+m+l-1)}^{\vm+l}	=
														\begin{cases}
																-1, &\text{ if }j \leq m+l-1 \text{ and }\vm+l=\omega(j,m+l-1),\\
																0, &\text{ otherwise.}
														\end{cases}\]
				As $\vm+l=\omega(j,m+l-1)$ if and only if $j=m$, it follows that 
				\[\commHm_{(n+m+l-1)l}=-X_m.\]

\noindent
\textbf{Case $\mathbf{i\in \{1, \dots, m-1\}\cup \{n+1, \dots, n+m-1\}}$.} In this case, we find that $\lambda_{ij}^{\vm+l}=0$ for all $j \in [2n]$. Hence
				\[\commHm_{il}=0.\]

Therefore, for each $m \in [n]$,
\begin{align}
\label{commH}\commHm&=\begin{bmatrix}
			  \multicolumn{4}{c}{\multirow{2}{*}{{\Large \textbf{0}}$_{(m-1)\times (n-m+1)}$}}\\
			  &  &  &  \\
			X_{n+m} & X_{n+m+1} & \dots & X_{2n}  \\
			& X_{n+m} &  &   \\
			&  & \ddots &   \\
			&  &  & X_{n+m}  \\
			\multicolumn{4}{c}{\multirow{2}{*}{{\Large \textbf{0}}$_{(m-1)\times (n-m+1)}$}}\\
			&  &  &  \\
			-X_{m} & -X_{m+1} & \dots & -X_{n}  \\
			& -X_{m} &  &   \\
			&  & \ddots &   \\
			&  &  & -X_{m}  \\
\end{bmatrix}
\in \Mat_{2n\times (n-m+1)}(\lri[\underline{X}]).
\end{align}
\begin{exm}
 The following examples illustrate the form of the commutator matrix for each type of group scheme. 
\begin{align*} A_{\mathcal{F}_{2,0}}(\underline{X})&=\left[ \begin {array}{cccccc}  X_2 & X_3 & X_4 & & & 
\\ \noalign{\medskip}-X_1 & & & X_3 & X_4 & 
\\ \noalign{\medskip} & -X_1 & & -X_2 & & X_4
\\ \noalign{\medskip} & & -X_1 & &-X_2 & -X_3
\end {array} \right],\\
 %\end{align*}
 %\begin{align*}
A_{\mathcal{G}_{3}}(\underline{X})&=\left[ \begin {array}{ccccccccc}  X_4 & X_5 & X_6 & & & & & & 
\\ \noalign{\medskip} & & & X_4 & X_5 & X_6 & & & 
\\ \noalign{\medskip} & & & & & & X_4 & X_5 & X_6
\\ \noalign{\medskip}-X_1 & & & -X_2 & & & -X_3 & & 
\\ \noalign{\medskip} & -X_1 & & & -X_2 & & & -X_3 &
\\ \noalign{\medskip} & & -X_1 & & & -X_2 & & & -X_3
 \end {array} \right],\\
 \end{align*}
 \begin{align*}
A_{\mathcal{H}_{3}}(\underline{X})&=\left[ \begin {array}{cccccc}  
		      X_4 & X_5 & X_6 & & & 
\\ \noalign{\medskip} & X_4 & & X_5 & X_6 & 
\\ \noalign{\medskip} & & X_4 & & X_5 & X_6
\\ \noalign{\medskip}-X_1 & -X_2 & -X_3 & & & 
\\ \noalign{\medskip} & -X_1 & & -X_2 & -X_3 & 
\\ \noalign{\medskip} & & -X_1 & & -X_2 & -X_3
\end {array} \right],  
\end{align*}
where the omitted entries equal zero.
\end{exm}
It is not difficult to see that $\commGen$ has rank $a-1$ in all cases, that is, $\ua=a-1$. 

We now proceed to a detailed analysis of the $A$-commutator matrix in each individual type.
 
\subsection{Conjugacy class zeta functions of groups of type \texorpdfstring{$F$}{F}}\label{typeF}
In this section, we prove Theorem~\ref{cclvl} for groups of type~$F$. 
More precisely, we use the descriptions~\eqref{commF} of the submatrices~$\commFm$ in order to determine the integrands of~\eqref{terms}. After that, we evaluate the corresponding integral.

\begin{lem}\label{simpleF} For $w \in \p$ and $\mathbf{x}\in \W_{a}$, that is, for $\mathbf{x}\in \lri^a$ such that $v_{\p}(\mathbf{x})=0$,
\begin{equation}\label{integrandF}\frac{\|F_{k}(A_{\mathcal{F}_{n,\delta}}(\mathbf{x}))\cup wF_{k-1}(A_{\mathcal{F}_{n,\delta}}(\mathbf{x}))\|_{\p}}
{\|F_{k-1}(A_{\mathcal{F}_{n,\delta}}(\mathbf{x}))\|_{\p}}=1, 
\text{ for all }k\in[a-1].\end{equation}
\end{lem}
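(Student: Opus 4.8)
The plan is to reduce \eqref{integrandF} to the single assertion that $\|F_k(A_{\Fnd}(\mathbf{x}))\|_\p=1$ for every $k\in\{0,1,\dots,a-1\}$, where $a=2n+\delta$ (with the usual convention $F_0(A_{\Fnd}(\mathbf{x}))=\{1\}$). Indeed, all entries of $A_{\Fnd}(\mathbf{x})$ lie in $\lri$, so every minor has $\p$-adic norm at most $1$; and since $w\in\p$ the set $wF_{k-1}(A_{\Fnd}(\mathbf{x}))$ is contained in $\p$ and hence has norm at most $q^{-1}$. Therefore, once we know $\|F_m(A_{\Fnd}(\mathbf{x}))\|_\p=1$ for $m\in\{0,\dots,a-1\}$, both the numerator $\|F_k(A_{\Fnd}(\mathbf{x}))\cup wF_{k-1}(A_{\Fnd}(\mathbf{x}))\|_\p$ and the denominator $\|F_{k-1}(A_{\Fnd}(\mathbf{x}))\|_\p$ of \eqref{integrandF} equal $1$ for all $k\in[a-1]$, which is exactly the claim. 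So the whole content of the lemma is the existence of a unit $k\times k$-minor of $A_{\Fnd}(\mathbf{x})$ for each $k\le a-1$.

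First I would record the shape of the columns of $A_{\Fnd}(\underline{X})$ read off from \eqref{commF}: for $1\le i<j\le a$ the column indexed by $y_{ij}$ has $X_j$ in row $i$, $-X_i$ in row $j$, and $0$ in every other row. Since $\mathbf{x}\in\W_a$, there is an index $j\in[a]$ with $x_j\in\lri^{\times}$; fix such a $j$. For each $i\in[a]\setminus\{j\}$ consider the column of $A_{\Fnd}(\mathbf{x})$ indexed by the pair $\{i,j\}$: after deleting row $j$, this column has the single nonzero entry $\pm x_j$, located in row $i$. Collecting these $a-1$ columns together with the $a-1$ rows $[a]\setminus\{j\}$ therefore produces an $(a-1)\times(a-1)$-submatrix of $A_{\Fnd}(\mathbf{x})$ which, after reordering columns, is diagonal with diagonal entries $\pm x_j$; its determinant is $\pm x_j^{\,a-1}\in\lri^{\times}$.

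Deleting rows and columns from this one distinguished submatrix gives, for every $k\in\{1,\dots,a-1\}$, a $k\times k$-submatrix of $A_{\Fnd}(\mathbf{x})$ with determinant $\pm x_j^{\,k}\in\lri^{\times}$; together with $\|F_0(A_{\Fnd}(\mathbf{x}))\|_\p=1$ this establishes $\|F_m(A_{\Fnd}(\mathbf{x}))\|_\p=1$ for all $m\in\{0,\dots,a-1\}$, and \eqref{integrandF} follows as explained above. I do not anticipate a genuine obstacle: conceptually, reducing $A_{\Fnd}(\mathbf{x})$ modulo $\p$ gives the matrix of $\ad$ by the nonzero residue of $\mathbf{x}$ in the free class-$2$-nilpotent Lie algebra over $\ri/\p$, whose centraliser is spanned by that residue, so the rank is forced to be $a-1$. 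The only point demanding a little care in the write-up is the bookkeeping of the ordering of the basis $\mathbf{f}$ and of the signs appearing in \eqref{commF}; but since only the $\p$-adic norm of each minor enters, those signs are irrelevant.
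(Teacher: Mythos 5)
Your proposal is correct and follows essentially the same route as the paper: the paper's proof also singles out an index $m_0$ with $x_{m_0}\in\lri^\times$ and exhibits the $a\times(a-1)$-submatrix $S_{m_0}(\mathbf{x})$ built from the columns $c_{i m_0}$ and $c_{m_0 j}$, whose deletion of row $m_0$ yields exactly your diagonal matrix with entries $\pm x_{m_0}$, forcing unit $k\times k$-minors for all $k\le a-1$. Your explicit reduction of \eqref{integrandF} to $\|F_m(A_{\Fnd}(\mathbf{x}))\|_\p=1$ (using $w\in\p$) is left implicit in the paper but is the same underlying argument.
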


\begin{proof}
Expression~\eqref{commF} shows that the columns of $\commF$ are of the form 
\begin{align}\label{columnf}
 \begin{matrix}
\vphantom{a}\\
    \coolleftbrace{\phantom{j}i\text{th row}}{X_{n+j}}\\
    \vphantom{a}\\
    \vphantom{b}\\
    \coolleftbrace{j\text{th row}\phantom{i}}{i}\\
    \vphantom{a}
\end{matrix}
\begin{bmatrix}
       \\
      X_{j} \\
       \\
       \\
      -X_i  \\
      \\ 
\end{bmatrix},
&\text{ for }1 \leq i < j \leq a,
\end{align}
where the nondisplayed entries equal $0$. Denote column~\eqref{columnf} by $c_{ij}$, that is, $c_{ij}$ is the unique column of $\commFm$ with $X_j$ in the $i$th row, with $-X_i$ in the $j$th row and zero elsewhere.

For each $m \in [a]$, consider the $a\times (a-1)$-submatrix $S_m(\underline{X})$ of~$\commF$ composed of columns $c_{1m}$, $c_{2m}$, $\dots$, $c_{(m-1)m}$, $c_{m(m+1)}$, $c_{m(m+2)}$, $\dots$, $c_{ma}$, in this order. Then $S_m(\underline{X})$ is the matrix with main diagonal given by $X_m$ in the first $m-1$ entries and $-X_m$ in the remaining main diagonal entries, and the other nonzero entries are the ones of row $m$, which is given by
\[(-X_1, \dots, -X_{m-1}, X_{m+1}, \dots, X_{a}).\]

Given $\mathbf{x}\in\W_{a}$, it is clear that, for at least one $m_0 \in [a]$, the matrix $S_{m_0}(\mathbf{x})$ has rank $a-1$.
That is, for each $k \in [a-1]$, there exists a $k\times k$-minor of $S_{m_0}(\mathbf{x})$ which is a unit.
Since the $k\times k$-minors of $S_{m_0}(\mathbf{x})$ are elements of $F_{k}(A_{\Fnd}(\mathbf{x}))$, it follows that~\eqref{integrandF} holds.
\qedhere
\end{proof}

Proposition~\ref{intpadic}, Lemma~\ref{simpleF} and Proposition~\ref{integral1} yield
\begin{align*}&\clvlzf_{\mathcal{F}_{n,\delta}(\lri)}(s_1,s_2)\\
&=\frac{1}{1-q^{\binom{2n+\delta}{2}-s_2}}\left(1+\frac{1}{1-q^{-1}}\int_{(w,\underline{x})\in\p\times\W_{2n+\delta}}|w|_{\p}^{(2n+\delta-1)s_1+s_2-\binom{2n+\delta}{2}-2}d\mu\right)\nonumber \\
 &=\frac{1-q^{\binom{2n+\delta-1}{2}-(2n+\delta-1)s_1-s_2}}{(1-q^{\binom{2n+\delta}{2}-s_2})(1-q^{\binom{2n+\delta}{2}+1-(2n+\delta-1)s_1-s_2})},
\end{align*}
proving Theorem~\ref{cclvl} for groups of type $F$.

\begin{rmk}\label{minimality}
Formula~\eqref{kzetaF} reflects the $K$-minimality of $\Lambda=\Fnd$; see~\cite[Lemma~6.2 and~Definition~6.3]{ro17}.
In fact, the proof of Lemma~\ref{simpleF} shows in particular that
 \[\frac{\|F_{k}(A_{\Fnd}(\underline{x}))\cup wF_{k-1}(A_{\Fnd}(\underline{x}))\|_{\p}}{\|F_{k-1}(A_{\Fnd}(\underline{x}))\|_{\p}}=\|\underline{x},w\|_{\p}.\]
The formula for the local factors of the class number zeta function of $F_{n,\delta}(\ri)$ given in Corollary~\ref{kzf} coincides with the formula for the class number zeta 
function of $\Fnd(\lri)$ given by the specialisation of the formula given in~\cite[Proposition~6.4]{ro17} to the corresponding class number zeta function; 
see~\cite[Remark~4.10 and Lemma~4.11]{PL18}.
\end{rmk}

\subsection{Conjugacy class zeta functions of groups of type \texorpdfstring{$G$}{G}}\label{typeG}
In this section, we prove Theorem~\ref{cclvl} for groups of type~$G$. Analogously to the previous section, we use the descriptions~\eqref{commG} of the submatrices~$\commGm$ to determine the integrands of~\eqref{terms}, which allow us to explicitly calculate such $\p$-adic integrals.

We first describe the determinant of a square matrix in terms of its $2\times 2$-minors, which will be used to describe the terms~\eqref{terms} for groups of type~$G$. 
For a matrix $M=(m_{ij})$, denote 
\[{M}_{(i,j),(r,s)}=\left| \begin {array}{cc} m_{ir}&m_{is}\\ \noalign{\medskip}m_{jr}&m_{js}\end {array} \right|.\]

\begin{lem}\label{Minors}
Fix $t \in \N$. Let $G$ be a $2t \times 2t$-matrix and let $U$ be a $(2t+1) \times (2t+1)$-matrix with entries
\[g_{ij}=g(\underline{X})_{ij}, \hspace{0.5cm} u_{ij}=u(\underline{X})_{ij} \in \lri[\underline{X}], \] respectively.
Let $\mathbf{i}=\{i_1, \dots, i_t\}$, $\mathbf{j}=\{j_1,\dots, j_t\}\subset~[2t]$.
Then, for suitable $\alpha_{\mathbf{i},\mathbf{j}},\beta_{\mathbf{i}, \mathbf{j}} \in \{-1,1\}$, it holds that
\begin{align*}	\det(G)&=\sum_{\substack{\mathbf{i}\cup\mathbf{j}=[2t]\\ i_q<j_q,~ \forall q\in[t]}}\alpha_{\mathbf{i},\mathbf{j}}~
  {G}_{(1,2),(i_1,j_1)}{G}_{(3,4),(i_2,j_2)}\cdots{G}_{(2t-1,2t),(i_t,j_t)},
  \\
  \det(U)&=\sum_{k=1}^{2t+1}\sum_{\substack{\mathbf{i}\cup\mathbf{j}=[2t+1]\setminus\{k\}\\ i_q<j_q,~ \forall q\in[t]}} 
  \beta_{\mathbf{i},\mathbf{j}}u_{1k}~
  {U}_{(1,2),(i_1,j_1)}^{k}{U}_{(3,4),(i_2,j_2)}^{k}\cdots {U}_{(2t-1,2t),(i_t,j_t)}^{k},
\end{align*}
where $U^k$ is the matrix obtained by excluding the first row and the $k$th column of~$U$.
\end{lem}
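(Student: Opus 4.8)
The plan is to derive both identities from the standard cofactor/Laplace expansion of the determinant, organised so that the $2\times 2$ minors appear naturally. First I would record the ``Pfaffian-like'' combinatorial fact underlying the even case: for a $2t\times 2t$ matrix, grouping the rows into the consecutive pairs $\{1,2\},\{3,4\},\dots,\{2t-1,2t\}$, the Leibniz formula $\det(G)=\sum_{\sigma\in S_{2t}}\mathrm{sgn}(\sigma)\prod_i g_{i\sigma(i)}$ can be reassembled by summing first over the (unordered) partition of the column set $[2t]$ into the pairs $\{\sigma(1),\sigma(2)\},\{\sigma(3),\sigma(4)\},\dots$ hit by each row-pair, and then over the two internal orderings of each such column-pair. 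Summing the two internal orderings of the columns assigned to row-pair $\{2q-1,2q\}$ against the two choices of which row goes to which column produces exactly the $2\times2$ minor $G_{(2q-1,2q),(i_q,j_q)}$ (with $i_q<j_q$), up to an overall sign $\alpha_{\mathbf i,\mathbf j}\in\{-1,1\}$ that depends only on the pairing, not on the matrix entries. This gives the first displayed formula.

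Next, for the odd case, I would reduce to the even case by a single Laplace expansion along the first row: $\det(U)=\sum_{k=1}^{2t+1}(-1)^{1+k}u_{1k}\det(U^k)$, where $U^k$ is the $2t\times(2t+1)$ matrix obtained by deleting row $1$; the cofactor is the $2t\times 2t$ minor of $U^k$ on the columns $[2t+1]\setminus\{k\}$. I would then apply the even-case identity just proved to each of these $2t\times 2t$ submatrices, pairing their rows consecutively as $\{1,2\},\{3,4\},\dots,\{2t-1,2t\}$ (these are rows $2,3,\dots,2t+1$ of $U$), to obtain a sum over partitions $\mathbf i\cup\mathbf j=[2t+1]\setminus\{k\}$ with $i_q<j_q$ of products of $2\times2$ minors $U^k_{(2q-1,2q),(i_q,j_q)}$. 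Absorbing the sign $(-1)^{1+k}$ together with the $\alpha$-signs from the even case into a single $\beta_{\mathbf i,\mathbf j}\in\{-1,1\}$ yields the second displayed formula. A small bookkeeping point: the lemma writes the $2\times 2$ minors of $U$ and of $U^k$ with row-indices $1,2,\dots,2t$; since $U^k$ already has the first row of $U$ deleted, its rows $1,\dots,2t$ are rows $2,\dots,2t+1$ of $U$, so the notation $U^k_{(2q-1,2q),\cdot}$ is consistent with ``rows of $U^k$'' and no re-indexing is needed beyond what the statement already fixes.

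The only genuine content, and the step I expect to be the main obstacle, is verifying that the signs $\alpha_{\mathbf i,\mathbf j}$ (and hence $\beta_{\mathbf i,\mathbf j}$) are well defined, i.e.\ that for a fixed pairing all four permutations contributing to a given product $\prod_q G_{(2q-1,2q),(i_q,j_q)}$ — the choices of which row of each pair maps to $i_q$ versus $j_q$ are already absorbed into the $2\times2$ minor, but one must check the cross-terms recombine with a consistent global sign — do in fact combine into $\pm$ that one $2\times2$-minor product and not into something with mixed signs. Concretely one checks that swapping the two columns within the block assigned to row-pair $\{2q-1,2q\}$ multiplies $\mathrm{sgn}(\sigma)$ by $-1$, which is exactly the antisymmetry of the $2\times 2$ determinant in its columns, so the two terms add rather than cancel; iterating over all $t$ blocks shows the whole contribution of the pairing is $\alpha_{\mathbf i,\mathbf j}$ times the minor product with a sign independent of the entries. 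I would present this via the change of summation variable from $\sigma$ to the pair (ordered perfect matching of columns into blocks, internal orderings), which is a bijection onto $S_{2t}$, and then fold the internal orderings. Since the precise value of $\alpha_{\mathbf i,\mathbf j}$ is never used downstream — only its membership in $\{-1,1\}$ — I would not compute it explicitly.
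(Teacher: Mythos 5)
Your proposal is correct, and for the even case it takes a genuinely different route from the paper. The paper proves the identity for $\det(G)$ by a double cofactor expansion: it first expands $\det(G^k)$ (first row and $k$th column of $G$ removed) along its top row, recombines the resulting terms into the two-row Laplace identity $\det(G)=\sum_{m<i}(-1)^{i+m-1}G_{(1,2),(m,i)}\widehat{G}_{\{1,2\},\{m,i\}}$, and then finishes by induction on $t$, applying the same identity to the complementary $(2t-2)\times(2t-2)$ minors. You instead regroup the full Leibniz sum in one pass, summing over the assignment of column pairs to the row blocks $\{2q-1,2q\}$ and folding the $2^t$ internal orderings into the product of $2\times2$ minors; the sign-consistency check you isolate (composing $\sigma$ with the transposition of $2q-1$ and $2q$ flips both $\mathrm{sgn}(\sigma)$ and the sign of the corresponding term of the minor, block by block) is exactly the right point and is what makes $\alpha_{\mathbf{i},\mathbf{j}}$ well defined. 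Your approach avoids the induction and the intermediate two-row identity at the cost of this global bookkeeping; the paper's approach localises the sign issue to a single pair of rows at each inductive step. One small slip: a fixed pairing contributes $2^t$ permutations, not four, but your per-block description of the recombination is correct, so nothing is affected. For the odd case your argument (expand along the first row, apply the even case to each $U^k$, absorb $(-1)^{1+k}$ into $\beta_{\mathbf{i},\mathbf{j}}$) coincides with the paper's, including the observation that the row labels $1,\dots,2t$ of $U^k$ require no further re-indexing.
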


\begin{proof}
Let $G^k=(\widetilde{g}_{ij})_{ij}$ be the matrix obtained from $G$ by excluding its first row and its $k$th column. Then, its entries are
\[\widetilde{g}_{ij}=\begin{cases}
                      g_{(i+1)j},&\text{ if }j\in[k-1],\\
                      g_{(i+1)(j+1)},&\text{ if }j\in\{k, \dots, 2t-1\}.
                 \end{cases}\]
Given two subsets $I,J \subseteq [2t]$ of equal cardinality $m$, denote by $\widehat{G}_{I,J}$ the determinant of the $(2t-m)\times (2t-m)$-submatrix of $G$ obtained by excluding the rows of indices in $I$ and columns of indices in $J$.
It follows that
\[ \det(G^k)=\sum_{j=1}^{k-1}(-1)^{1+j}g_{2j}\widehat{G}_{\{1,2\},\{j,k\}}+\sum_{j=k}^{2t-1}(-1)^{1+j}g_{2(j+1)}\widehat{G}_{\{1,2\},\{k,j+1\}}.\]
Consequently,
\begin{align*}
 \det(G)&=\sum_{k=1}^{2t}(-1)^{1+k}g_{1k}\det(G^k)\\
 &=\sum_{k=1}^{2t}\left(\sum_{j=1}^{k-1}(-1)^{k+j}g_{1k}g_{2j}\widehat{G}_{\{1,2\},\{j,k\}}-\sum_{j=k+1}^{2t}(-1)^{k+j}g_{1k}g_{2j}\widehat{G}_{\{1,2\},\{k,j\}}\right)\\
 &=\sum_{m=1}^{2t-1}\sum_{i=m+1}^{2t}(-1)^{i+m-1}(g_{1m}g_{2i}-g_{1i}g_{2m})\widehat{G}_{\{1,2\}\{m,i\}}\\
%  \end{align*}
%  \begin{align*}
 &=\sum_{m=2}^{2t-1}\sum_{i=m+1}^{2t}(-1)^{i+m-1}{G}_{(1,2),(m,i)}\widehat{G}_{\{1,2\},\{m,i\}}.
\end{align*}
The relevant claim of Lemma~\ref{Minors} for~$G$ follows by induction on $t$.

The claim for~$U$ follows from the first part, since its determinant is  
%\[\det(U)=\sum_{k=1}^{2t+1}(-1)^{k+1}u_{k(2t+1){U}_{\{1\},\{k\}}.\qedhere\]
\[\det(U)=\sum_{k=1}^{2t+1}(-1)^{k+1}u_{1k}\det(U^k).\qedhere\]
\end{proof}

\begin{lem} For each $r \in [2n]$, the nonzero elements of $F_{r}(\commG)$ are either of one of the following forms or a sum of these terms.
\[X_{i_1}\dots X_{i_{\omega}}X_{n+j_{1}}\dots X_{n+j_{\lambda}}\text{ or }-X_{i_1}\dots X_{i_{\omega}}X_{n+j_{1}}\dots X_{n+j_{\lambda}}.\]
\end{lem}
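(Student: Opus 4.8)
The plan is to analyze the structure of the $A$-commutator matrix $\commG = \commGen$ with $\Lambda = \Gn$ as given by the block description~\eqref{commG}. The key structural observation is that each submatrix $\commGm$ has entries drawn only from the two disjoint variable sets $\{X_{n+1}, \dots, X_{2n}\}$ (appearing in rows $1, \dots, n$) and $\{-X_1, \dots, -X_n\}$ (appearing in rows $n+1, \dots, 2n$); moreover, within a single column $c$ of $\commGm$, there is exactly one entry from the top block (some $X_{n+l}$) and exactly one from the bottom block (some $-X_m$). So every column of $\commG$ is a vector with precisely two nonzero entries: one of the form $X_{n+j}$ in some row $i \leq n$, and one of the form $-X_i$ in row $n+j$. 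This is the same combinatorial shape as column~\eqref{columnf} in the type $F$ analysis.

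\textbf{Key steps.} First I would record the precise column structure of $\commG$: for each pair $(i,j) \in [n]^2$ there is a column $c_{ij}$ with $X_{n+j}$ in row $i$, $-X_i$ in row $n+j$, and zeros elsewhere. Second, I would compute an $r \times r$ minor: choose $r$ rows and $r$ columns; the resulting $r \times r$ submatrix, after expansion along any row or column (Laplace expansion), is an alternating sum of products of $r$ entries, one from each chosen row and column. Since every entry is $\pm$ a single variable—either some $X_{n+j}$ or some $X_i$—every nonzero term in that expansion is $\pm$ a product of $r$ variables, where some number $\omega$ of them lie in $\{X_1, \dots, X_n\}$ (coming from bottom-block rows) and the remaining $\lambda = r - \omega$ lie in $\{X_{n+1}, \dots, X_{2n}\}$ (coming from top-block rows). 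Relabeling the indices gives exactly a monomial of the form $\pm X_{i_1}\cdots X_{i_\omega} X_{n+j_1}\cdots X_{n+j_\lambda}$. Summing such terms over all permutations in the Laplace expansion produces the "sum of these terms" alternative in the statement. Third, I would note that $\omega$ is determined by how many of the $r$ chosen rows come from $\{1, \dots, n\}$ versus $\{n+1, \dots, 2n\}$, so the claim as stated (existence of some valid $\omega, \lambda$) follows immediately.

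\textbf{Main obstacle.} The only delicate point is confirming that in the Laplace/Leibniz expansion of the $r \times r$ submatrix, no term is itself a polynomial rather than a monomial—i.e., that entries never combine additively before the sign-sum. This is immediate here because every single entry of $\commG$ is a monomial (in fact $\pm$ a single variable), so each product $\prod$ over a permutation is a monomial; only the final summation over permutations can create genuine polynomials, which is precisely why the statement allows "a sum of these terms." A secondary bookkeeping point is to verify that bottom-block rows contribute only $X_i$-type factors and top-block rows only $X_{n+j}$-type factors, which is visible directly from~\eqref{commG}. I would therefore structure the proof as: (1) state the two-nonzero-entries-per-column structure; (2) invoke the Leibniz formula for the minor; (3) observe each monomial term splits according to row-block membership into the claimed shape; (4) conclude. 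No hard estimates are needed—the argument is purely structural, mirroring Lemma~\ref{simpleF}.
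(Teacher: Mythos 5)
Your argument is correct, and it takes a genuinely more elementary route than the paper's. The paper invokes Lemma~\ref{Minors}, which recasts the $r\times r$-determinant as a sum of products of $2\times 2$-minors taken across the consecutive row-pairs $(1,2),(3,4),\dots$, and then observes from~\eqref{commG} that every such $2\times 2$-minor is $0$ or $\pm X_iX_j$. You instead go straight to the Leibniz expansion: every entry of $\commG$ is $0$, $X_{n+j}$ (in a row of index $\leq n$), or $-X_i$ (in a row of index $>n$), so every permutation-product in an $r\times r$-minor is a monomial of degree $r$, with the split $\omega+\lambda=r$ dictated by how many chosen rows lie in the bottom block versus the top block. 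Both arguments are sound and yield the same conclusion; yours avoids the machinery of Lemma~\ref{Minors} entirely. The paper's choice to route through $2\times 2$-minors is best explained by uniformity: in the type-$H$ analysis (Lemma~\ref{lemcommH}) the pairing of rows $t$ and $n+t$ is essential, because there the relevant building blocks are genuinely the binomial expressions $M_{ij}(\underline{X})=X_iX_{n+j}-X_jX_{n+i}$ rather than single monomials, and the $2\times 2$-minor decomposition is what isolates them. For type $G$ that extra structure is unnecessary, and your Leibniz argument is cleaner and self-contained. One small bookkeeping remark: you do not actually need the ``exactly two nonzero entries per column'' observation---the argument only uses that each entry is $0$ or $\pm$ a single variable, together with the row-block partition of which variable set contributes where---but it does no harm and clarifies the picture.
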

\begin{proof}
Lemma~\ref{Minors} describes each element of $F_{k}(\commG)$ in terms of sums of products of $2\times 2$-minors of $\commG$.
It then suffices to show that these minors are all either $0$ or of the forms $X_iX_j$ or $-X_iX_j$, for some $i,j\in [2n]$.
This can be seen from the description of $\commG$ in terms of the blocks \eqref{commG}.
\end{proof}

The main idea of the proof of Theorem~\ref{cclvl} for groups of type $G$ is showing the following proposition.
\begin{pps}\label{claim} Let $\underline{X}=(X_1,\dots,X_{2n})$ be a vector of variables. Consider $\lambda,\omega \in [n]_0$ such that $0 <\omega+\lambda < 2n$. Then,  
for all choices of $i_1$, $\dots$, $i_{\omega}$, $j_1$, $\dots$, $j_{\lambda} \in [n]$, one of
\[X_{i_1}\dots X_{i_{\omega}}X_{n+j_{1}}\dots X_{n+j_{\lambda}}\text{ or }-X_{i_1}\dots X_{i_{\omega}}X_{n+j_{1}}\dots X_{n+j_{\lambda}}\]
is an element of 
$F_{\omega+\lambda}(\commG)$.
\end{pps}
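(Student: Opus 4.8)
The plan is to realise each such monomial, up to sign, as the determinant of a well-chosen $(\omega+\lambda)\times(\omega+\lambda)$ submatrix of $\commG$; since $F_{\omega+\lambda}(\commG)$ is by definition the set of nonzero $(\omega+\lambda)\times(\omega+\lambda)$ minors of $\commG$, exhibiting such a submatrix is exactly what is required. The starting point is the description~\eqref{commG}: the column of $\commG$ indexed by a pair $(m,l)\in[n]^2$ has precisely two nonzero entries, namely $X_{n+l}$ in row $m$ and $-X_m$ in row $n+l$. Given the target monomial $X_{i_1}\cdots X_{i_\omega}X_{n+j_1}\cdots X_{n+j_\lambda}$, I would look for the submatrix $S$ supported on the columns $(i_1,l_1),\dots,(i_\omega,l_\omega),(m_1,j_1),\dots,(m_\lambda,j_\lambda)$ and on the rows $n+l_1,\dots,n+l_\omega,m_1,\dots,m_\lambda$, where the $l_k\in[n]$ are to be chosen pairwise distinct, the $m_k\in[n]$ pairwise distinct, and all selected column labels pairwise distinct; then pairing column $(i_k,l_k)$ with row $n+l_k$ reads off $-X_{i_k}$, and pairing column $(m_{k'},j_{k'})$ with row $m_{k'}$ reads off $X_{n+j_{k'}}$, so the ``diagonal'' product of $S$ is the desired monomial up to sign.

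Next I would record when $S$ actually has that determinant. Besides the matched entry $-X_{i_k}$, column $(i_k,l_k)$ has its only other nonzero entry in row $i_k$, and that row is selected exactly when $i_k=m_{k'}$ for some $k'$; dually, column $(m_{k'},j_{k'})$ has its only other nonzero entry, $-X_{m_{k'}}$, in row $n+j_{k'}$, which is selected exactly when $j_{k'}=l_k$ for some $k$. I would encode these coincidences in a digraph $D$ on the disjoint union $\{1,\dots,\omega\}\sqcup\{1,\dots,\lambda\}$, with an arc from $k$ in the first part to $k'$ in the second whenever $i_k=m_{k'}$, and an arc from $k$ in the second part to $k'$ in the first whenever $j_k=l_{k'}$. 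If $D$ is acyclic, then a topological order of its vertices reorders the rows and columns of $S$ so that $S$ becomes triangular with the matched entries on the diagonal; hence $\det S=\pm X_{i_1}\cdots X_{i_\omega}X_{n+j_1}\cdots X_{n+j_\lambda}$, which is nonzero. (A directed cycle of $D$ corresponds to an alternating cycle in the bipartite support of $S$, i.e.\ to a competing permutation contributing a monomial in different variables, so acyclicity is also the right condition to aim for.) Thus the statement reduces to the purely combinatorial task of choosing the $l_k$ and the $m_k$ --- distinct within each block and with distinct column labels --- so that $D$ is acyclic.

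For that I would first dispose of the easy situations. If $\omega+|\{j_1,\dots,j_\lambda\}|\le n$, choose $\{l_1,\dots,l_\omega\}$ disjoint from $\{j_1,\dots,j_\lambda\}$: then $D$ has no arc from the second block to the first, hence no cycle, and the column labels are automatically distinct; the cases $\omega+\lambda\le n$ and $\lambda=0$ are special instances. Symmetrically, $\lambda+|\{i_1,\dots,i_\omega\}|\le n$ is handled by choosing $\{m_1,\dots,m_\lambda\}$ disjoint from $\{i_1,\dots,i_\omega\}$. Moreover $\commG$ enjoys a symmetry: transposing every column index $(m,l)\mapsto(l,m)$, swapping rows $i\leftrightarrow n+i$, and swapping variables $X_i\leftrightarrow X_{n+i}$ carries $\commG$ to itself up to a permutation and sign changes of columns, and it interchanges the roles of $\omega$ and $\lambda$; so one may assume $\lambda\le\omega$, whence $\lambda\le n-1$. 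In the remaining range --- both arc types genuinely present, $\lambda\le n-1$ and $\omega\le n$ --- I would fix a suitable linear order of the $\omega+\lambda$ slots and assign the values $l_k$ and $m_k$ greedily along it so that no arc points backwards: at each step the union of the already-used values with the relevant forbidden $i$- or $j$-values has fewer than $n$ elements, so a legal value always remains. (Alternatively one can induct on $n$: choose $l_1$ and $m_1$ so that one $X$-slot acquires no incoming arc and one $Z$-slot no outgoing arc, remove these two slots, and recurse on the $(n-1)$-analogue.)

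The main obstacle is precisely this last step: when $\omega+\lambda$ is close to $2n$, almost all of $[n]$ is consumed by the $l_k$ together with the $m_k$, so one must choose the ordering of the slots (or the inductive deletions) and keep track of the forbidden values carefully enough to secure, simultaneously, distinctness of the selected rows, distinctness of the selected column labels, and acyclicity of $D$. Everything upstream of it --- the reduction to a single submatrix, the two-nonzero-entries description from~\eqref{commG}, and the passage from ``$\det S$ equals the monomial up to sign'' to ``$D$ is acyclic'' --- is routine once the submatrix has been written down.
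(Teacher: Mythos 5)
Your overall strategy coincides with the paper's: you aim to realise the target monomial, up to sign, as the determinant of an explicit $(\omega+\lambda)\times(\omega+\lambda)$ submatrix whose columns are chosen via the two-nonzero-entries description of $A_{\mathcal{G}_n}(\underline{X})$, and you correctly reduce the problem, via the symmetry $X_i\leftrightarrow X_{n+i}$, to $\lambda\le\omega$. Your reformulation of ``$\det S$ is exactly the diagonal product'' as acyclicity of the coincidence digraph $D$ is a clean and correct abstraction of what the paper achieves concretely by exhibiting a block matrix of shape~\eqref{most} with strictly triangular off-diagonal blocks.

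However, there is a genuine gap, and it sits exactly where you say you would wave your hands. The whole burden of the proof is the combinatorial choice of the $l_k$ and $m_k$ (the paper's $r_t$ and $\mathcal{R}_q$) so that rows are distinct, column labels are distinct, and $D$ is acyclic; you assert that ``at each step the union of the already-used values with the relevant forbidden $i$- or $j$-values has fewer than $n$ elements, so a legal value always remains,'' but you never prove this, and it is not true for a naive greedy. If at step $t$ you avoid all already-used $l$-values together with all the $i$-values, the forbidden set has size up to $(t-1)+|\{i_1,\dots,i_\omega\}|$, which exceeds $n-1$ as soon as $\omega+\lambda$ is close to $2n$. The paper's Lemma~\ref{mostcases} avoids this by a precise interleaving: at step $t$ one only forbids $\{r_1,\dots,r_{t-1}\}\cup\{i_t,\dots,i_\omega\}$, i.e.\ only the \emph{tail} of the $i$-sequence, and the two sets are then disjoint by construction, giving the bound $\omega$; this is the trick that makes the count close, and it is precisely what produces $v_{tq}=0$ for $t\le q$ rather than $V=0$. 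Without specifying this (or some equivalent careful scheduling), the claim ``a legal value always remains'' is unjustified. Your inductive alternative suffers from the same problem: removing two slots does not obviously yield the ``$(n-1)$-analogue'' of the same shape, and you do not check that the slots you want to delete always exist.

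Finally, note that the paper also handles a corner case explicitly (proof of Proposition~\ref{claim}): when $\lambda=n$ with $j_1,\dots,j_n$ all distinct (or symmetrically $\omega=n$ with the $i$'s distinct), it abandons the generic choice of rows and instead takes the rows $n+j_q$, so that the lower-right block becomes diagonal rather than strictly triangular. Your symmetry reduction to $\lambda\le\omega$ eliminates the $\lambda=n$ version of this, but the $\omega=n$, $i$'s-all-distinct version is still in play for you; your ``easy case'' $\lambda+|\{i_1,\dots,i_\omega\}|\le n$ requires $\lambda=0$ there, so it does not cover $\lambda\ge1$. To complete your proof you would need to spell out the scheduling (which slot is filled when, and what is forbidden at each step), prove the resulting cardinality bound, and verify the distinctness of the selected column pairs, exactly as the paper does via the explicit constructions around~\eqref{most} and~\eqref{columng2}.
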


In fact, assuming Proposition~\ref{claim}, we have the following.  For $x,y \in \lri$, 
\[\min\{v_{\p}(x+y),v_{\p}(x),v_{\p}(y)\}=\min\{v_{\p}(x),v_{\p}(y)\}.\] Thus, if some term of the form 
\[X_{i_1}X_{i_2}\dots X_{i_{\omega}}X_{n+j_{1}}\dots X_{n+j_{\lambda}}\pm X_{k_1}X_{k_2}\dots X_{k_{\omega}}X_{n+l_{1}}\dots X_{n+l_{\lambda}}\]
is the determinant of a submatrix of~$\commG$, then, assuming that the claim in Proposition~\ref{claim} holds, (up to sign) both 
\[X_{i_1}X_{i_2}\dots X_{i_{\omega}}X_{n+j_{1}}\dots X_{n+j_{\lambda}} \text{ and }X_{k_1}X_{k_2}\dots X_{k_{\omega}}X_{n+l_{1}}\dots X_{n+l_{\lambda}}\]
are also determinants of submatrices of~$\commG$, and hence, when considering these three terms, only the last two 
will be relevant in order to determine $\|F_{k}(\commG)\|_{\p}$.
In this case, we may assume that all elements of~$F_{k}(\commG)$ and $F_{k-1}(\commG)$ are of the forms given in Proposition~\ref{claim} while computing $\|F_{k}(\commG)\|_{\p}$ and $\|F_{k}(\commG)\cup wF_{k-1}(\commG)\|_{\p}$.

\vspace{0.1cm}Firstly we show Proposition~\ref{claim} for most cases, excluding only the case $\omega=n$ and all $i_1, \dots, i_n$ distinct and the case $\lambda=n$ and all $j_1, \dots, j_n$ distinct. 
%In other words, if $\omega=n$, we do not allow all the $i_k$ to be distinct and, similarly, if $\lambda=n$, we do not allow all the $j_k$ to be distinct. 
\begin{lem}\label{mostcases}
 Let $\omega,\lambda \in [n]_0$ be such that $0<\omega + \lambda <2n$. Given $i_1$, $\dots$, $i_{\omega} \in [n]$ and $j_1$, $\dots$, $j_{\lambda}\in[n]$ satisfying  $\{i_1, \dots, i_{\omega}\}\neq [n]$ and $\{j_1, \dots,j_{\lambda}\}\neq [n]$, we have that either 
 \[X_{i_1}\cdots X_{i_{\omega}}X_{n+j_1}\cdots X_{n+j_{\lambda}}~\text{ or }~-X_{i_1}\cdots X_{i_{\omega}}X_{n+j_1}\cdots X_{n+j_{\lambda}}\] 
 is an element of $F_{\lambda+\omega}(\commG)$.
\end{lem}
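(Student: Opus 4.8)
The plan is to prove Lemma~\ref{mostcases} by exhibiting, for every monomial $M=\pm X_{i_1}\cdots X_{i_\omega}X_{n+j_1}\cdots X_{n+j_\lambda}$ as in the statement, an explicit $(\omega+\lambda)$-element set of rows $R$ and $(\omega+\lambda)$-element set of columns $C$ of $\commG$ with $\det\!\big(\commG[R,C]\big)=\pm M$, which by definition of $F_{\omega+\lambda}(\commG)$ gives the claim. Recall from \eqref{commG} that the columns of $\commG$ are exactly the vectors $c_{m,l}:=X_{n+l}\,e_m-X_m\,e_{n+l}$, $m,l\in[n]$, so each $c_{m,l}$ has precisely two nonzero entries: $X_{n+l}$ in the ``top'' row $m\le n$, and $-X_m$ in the ``bottom'' row $n+l$. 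I will realise $M$ by choosing $\lambda$ ``top'' columns $c_{p_k,j_k}$, each contributing its factor $X_{n+j_k}$ through the top row $p_k$, together with $\omega$ ``bottom'' columns $c_{i_k,q_k}$, each contributing $-X_{i_k}$ through the bottom row $n+q_k$. For the square submatrix on $R=\{p_1,\dots,p_\lambda\}\cup\{n+q_1,\dots,n+q_\omega\}$ and $C=\{c_{p_k,j_k}\}\cup\{c_{i_k,q_k}\}$ to have determinant $\pm M$ it suffices that the $p_k$ be pairwise distinct, the $q_k$ pairwise distinct, the $\omega+\lambda$ columns pairwise distinct, and that the bipartite incidence graph between $R$ and $C$ admit a \emph{unique} perfect matching; then $\det\!\big(\commG[R,C]\big)$ is $\pm$ the product of the entries along that matching, which is $\pm M$ by construction.

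This is where the hypothesis of Lemma~\ref{mostcases} enters: since $\{i_1,\dots,i_\omega\}\neq[n]$ and $\{j_1,\dots,j_\lambda\}\neq[n]$ there exist $a\in[n]\setminus\{i_1,\dots,i_\omega\}$ and $b\in[n]\setminus\{j_1,\dots,j_\lambda\}$, and these free indices give the room to force uniqueness of the matching. First I would dispose of the easy regime: if $\lambda\leq n-|\{i_1,\dots,i_\omega\}|$ — in particular whenever $\omega+\lambda\leq n$ — choose $\{p_1,\dots,p_\lambda\}$ to be $\lambda$ distinct elements of $[n]\setminus\{i_1,\dots,i_\omega\}$ and $\{q_1,\dots,q_\omega\}$ any $\omega$ distinct elements of $[n]$; then each bottom column $c_{i_k,q_k}$ meets $R$ only in its bottom entry (its top row $i_k$ lies outside $R$), so every bottom column, and then every top column, is forced, the matching is unique, and $\det=\pm M$. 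The symmetric choice settles the case $\omega\leq n-|\{j_1,\dots,j_\lambda\}|$.

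The remaining regime — $\lambda+|\{i_k\}|>n$ and $\omega+|\{j_k\}|>n$, which forces $\omega+\lambda>n$ and $\omega,\lambda\geq 2$ — is the real work, and I would handle it by induction on $\omega+\lambda$, using $a$ and $b$ to seed forced rows. Concretely, take one bottom column to be $c_{i_{k_0},b}$: since $b\notin\{j_k\}$ and the $q$-values are distinct, the bottom row $n+b$ is met by no other chosen column and is forced; dually, placing a top column in top row $a$ makes that row forced, because $a\notin\{i_k\}$ so no bottom column reaches it. Deleting a forced row–column pair strips one variable off $M$, leaving a monomial with strictly fewer factors and sub-multisets of indices (hence still satisfying the hypothesis), and the inductive submatrix may be chosen — selecting its own free indices compatibly — to avoid the rows and columns already used; iterated Laplace expansion along the forced columns then gives $\det=\pm M$. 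The main obstacle I anticipate is precisely this inductive bookkeeping in the last regime: keeping all chosen columns distinct throughout, guaranteeing that each successive deletion genuinely exposes a forced row, and ruling out any alternating cycle in the incidence graph (which would turn the determinant into a sum of monomials rather than the single monomial $\pm M$). Everything else — reading off the column structure from \eqref{commG}, reducing to a minor, and evaluating the determinants by cofactor expansion — is routine.
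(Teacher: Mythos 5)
Your overall plan — exhibit a $(\omega+\lambda)\times(\omega+\lambda)$ submatrix of $\commG$ with determinant $\pm M$ by choosing columns $c_{m,l}=X_{n+l}e_m-X_m e_{n+l}$ so that the bipartite incidence graph has a unique perfect matching — is exactly the strategy the paper uses. The issue is that you leave the hard regime ($\lambda>n-|\{i_1,\dots,i_\omega\}|$ and $\omega>n-|\{j_1,\dots,j_\lambda\}|$) to an induction that you yourself flag as unfinished, and that induction is precisely the content of the lemma: you still have to keep the $\omega+\lambda$ chosen columns pairwise distinct across the recursion, ensure that a forced row actually appears at each step (one choice of $a$ or $b$ gives you one forced deletion, not a chain of them), and rule out alternating cycles in the surviving incidence graph. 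None of this is established, so the argument has a genuine gap.

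What the paper does differently is collapse your two regimes into one construction by a cleverer choice of auxiliary indices. Instead of asking the ``top'' rows $r_1,\dots,r_\lambda$ to avoid all of $\{i_1,\dots,i_\omega\}$ (which is your easy regime and is generally impossible in the hard regime), the paper sets $r_1\in[n]\setminus\{i_1,\dots,i_\omega\}$ and then, inductively, $r_t\in[n]\setminus\bigl(\{r_1,\dots,r_{t-1}\}\cup\{i_t,\dots,i_\omega\}\bigr)$, so that $r_t$ is only required to avoid the \emph{tail} $\{i_t,\dots,i_\omega\}$ and may coincide with earlier $i_s$ for $s<t$; symmetrically for the rows $\row_q$ relative to the $j$'s. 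After ordering rows and columns accordingly, the resulting submatrix has the block form~\eqref{most}: the two diagonal blocks are $\diag(X_{n+j_1},\dots,X_{n+j_\lambda})$ and $\diag(-X_{i_1},\dots,-X_{i_\omega})$, and the two off-diagonal blocks $V(\underline{X})$, $W(\underline{X})$ are strictly lower triangular ($v_{ij}=w_{ij}=0$ for $i\le j$). In the matching language you set up, the strict triangularity is exactly the statement that the bipartite graph has a unique perfect matching, but it is proved in one shot by checking, e.g., that $v_{tq}\neq 0$ would force $r_t=i_q$, which the tail-avoidance condition rules out whenever $t\le q$. So the missing idea in your proposal is this partial (tail-only) avoidance: it is what gives you the triangular block structure directly and eliminates the need for the induction, the case split, and the careful cycle-free bookkeeping you correctly identify as the obstacle.
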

\begin{proof}
For each $(\mathbf{i},\mathbf{j})=(i_1, \dots, i_{\omega},j_1, \dots, j_{\lambda})$ as in the assumption of Lemma~\ref{mostcases}, we construct explicitly a submatrix of $\commG$ 
which is of the form
\begin{equation}\label{most}\begin{bmatrix}[ccc|ccc]
			X_{n+j_1} &  &  & \multicolumn{3}{c}{\multirow{3}{*}{$V(\underline{X})$}}\\
			&\ddots & & & \\
			&  & X_{n+j_{\lambda}} &  & \\
			\hline
			\multicolumn{3}{c|}{\multirow{3}{*}{$W(\underline{X})$}} & -X_{i_1}&&\\
			&&&&\ddots&\\
			&&&&&-X_{i_{\omega}}
			 
\end{bmatrix}\end{equation}
where $V(\underline{X})=(v(\underline{X})_{ij})$ and $W(\underline{X})=(w(\underline{X})_{ij})$ are such that $v(\underline{X})_{ij}=0$ and $w(\underline{X})_{ij}=0$, whenever $i \leq j$.
It is clear that the determinant of this matrix is one of $\pm X_{i_1}\cdots X_{i_{\omega}}X_{n+j_1}\cdots X_{n+j_{\lambda}}$.

The main fact we use is that the columns of $A_{\mathcal{G}_n}(\underline{X})$ are of the form \\
\begin{equation}\label{columng}
 \begin{matrix}
\vphantom{a}\\
    \coolleftbrace{\phantom{(n+j)}i\text{th row}}{X_{n+j}}\\
    \vphantom{a}\\
    \vphantom{b}\\
    \coolleftbrace{\phantom{i}(n+j)\text{th row}}{i}\\
    \vphantom{a}
\end{matrix}
\begin{bmatrix}
       \\
      X_{n+j} \\
       \\
       \\
      -X_i  \\
      \\ 
\end{bmatrix},
\end{equation}
where the nondisplayed terms equal zero and $i,j \in [n]$.
For each $i,j\in[n]$, there is exactly one column of $\commG$ with $X_{n+j}$ in the $i$-th row, and exactly one column with $-X_i$ in the $(n+j)$th row.

Firstly, we choose rows and columns of~$\commG$, so that the submatrix consisting of these rows and columns is $\diag(X_{n+j_1}, \dots, X_{n+j_{\lambda}})$, the diagonal matrix with main diagonal entries $X_{n+j_1}, \dots, X_{n+j_{\lambda}}$. 
Fix $r_1 \in [n]\setminus\{i_1,\dots, i_{\omega}\}$ and, inductively, fix $r_t \in [n]\setminus \{r_1,\dots,r_{t-1},i_t,\dots,i_{\omega}\}$ for each $t \in \{2, \dots, \lambda\}$. If $\lambda>\omega$, fix $r_t \in [n] \setminus \{r_1, \dots, r_{t-1}\}$ for $t > \omega$. 
For each $t \in [\lambda]$, let $c_t$ be the index of the unique column of~$\commG$ with $X_{n+j_t}$ in the $r_t$th row.

The submatrix consisting of rows $r_1, \dots, r_{\lambda}$ and of columns $c_1, \dots, c_{\lambda}$ in this order is $\diag(X_{n+j_1}, \dots, X_{n+j_{\lambda}})$ because the only nonzero entries of $c_t$ in~$\commG$ are the ones of indices $r_t$ and $n+j_t$. Since the chosen $\lambda \times \lambda$-matrix consists only of rows of~$\commG$ of index in~$[n]$, it follows that the only nonzero entry of $c_t$ in this submatrix is $X_{n+j_t}$. 

Now, we choose rows and columns of $\commG$ so that the submatrix composed of such rows and columns is $\diag(-X_{i_1}, \dots, -X_{i_{\omega}})$. Fix $\row_1\in[n]\setminus \{j_1, \dots, j_{\lambda}\}$ and, inductively, fix $\row_q\in [n]\setminus \{\row_1, \dots, \row_{q-1},j_q, \dots, j_{\lambda}\}$ for each $q \in \{2, \dots, \omega\}$. If $\omega> \lambda$, fix $\row_q \in [n] \setminus \{\row_1, \dots, \row_{q-1}\}$ for $q>\lambda$. For each $q \in [\omega]$, let $\col_q$ be the index of the unique column of~$\commG$ with $-X_{i_q}$ in the $(n+\row_q)$th row. 

Analogously to the previous construction, we have that the submatrix of~$\commG$ composed of rows $n+\row_1, \dots, n+\row_{\omega}$ and columns $\col_1, \dots, \col_{\omega}$ is $\diag(-X_1, \dots, -X_{\omega})$.

We claim that the submatrix~$S(\underline{X})$ of~$\commG$ composed of rows $r_1, \dots, r_{\lambda}$, $n+\row_1, \dots, n+\row_{\omega}$, and of columns $c_1, \dots, c_{\lambda}$, $\col_1 \dots, \col_{\omega}$ in this order is of the form~\eqref{most}.

First, we must assure that the columns $c_t$ and the columns $\col_q$ are all distinct. 
From~\eqref{columng}, one sees that the columns $c_t$ and $\col_q$ are

\begin{align}\label{columng2}
 \begin{matrix}
\vphantom{a}\\
    \coolleftbrace{\phantom{(n+j)}r_t\text{th row}}{X_{n+j}}\\
    \vphantom{a}\\
    \vphantom{b}\\
    \coolleftbrace{(n+j_t)\phantom{r}\text{th row}}{i}\\
    \vphantom{a}
\end{matrix}
\begin{bmatrix}
       \coolover{c_t}{\phantom{X}}\\
      X_{n+j_t} \\
       \\
       \\
      -X_{r_t}  \\
      \\ 
\end{bmatrix}
&
\begin{matrix}
\vphantom{a}\\
    \coolleftbrace{\phantom{(n+m)}i_q\text{th row}}{X_{n+j}}\\
    \vphantom{a}\\
    \vphantom{b}\\
    \coolleftbrace{(n+\row_q)\phantom{i}\text{th row}}{i}\\
    \vphantom{a}
\end{matrix}
\begin{bmatrix}
       \coolover{\col_q}{\phantom{X}}\\
      X_{n+\row_q} \\
       \\
       \\
      -X_{i_q}  \\
      \\ 
\end{bmatrix}.
\end{align}
By construction, the indices $c_t$ are all distinct, and so are the indices $\col_q$.
If $c_t=\col_q$ for some $t \in [\lambda]$ and some $q\in[\omega]$, then we would obtain $r_t=i_q$ and $\row_q=j_t$.
First, suppose that $\lambda = \omega$. 
Since $r_t \notin \{i_t, \dots, i_{\omega}\}$, for $r_t=i_q$ to hold, we must have $t>q$. However, since $\row_q \notin \{j_q, \dots, j_{\lambda}\}$, for $\row_q=j_t$ to hold, we must have $q>t$. 
Now, assume $\lambda > \omega$. In this case, for $t \in [\omega]$, the same argument as for $\lambda=\omega$ shows that we cannot have $r_t=i_q$ and $\row_q=j_t$ simultaneously. Assume then that $t > \omega$. Then, since $\row_q \notin \{j_q, \dots, j_{\lambda}\}$, for $\row_q=j_t$ to hold, we must have $q>t$. However, $t > \omega \geq q$. Analogous arguments show that, for $\omega>\lambda$, we must have $c_t\neq \col_q$ for all $t \in [\lambda]$ and all $q \in [\omega]$.

It is clear that the indices $r_k$ and $n+\row_{q}$ are all distinct. 
It follows that the submatrix~$S(\underline{X})$ is of the form~\eqref{most} for some matrices $V(\underline{X})\in \Mat_{\lambda\times\omega}(\lri[\underline{X}])$ and $W(\underline{X})\in \Mat_{\omega\times\lambda}(\lri[\underline{X}])$. We are now left to show that, in fact, $v(\underline{X})_{ij}=0$ and $w(\underline{X})_{ij}=0$ for $i\leq j$.

Let us show this for $V(\underline{X})=(v(\underline{X})_{ij})$. Observe that $v_{tq}$ is the $(r_t,\col_{q})$-entry of $\commG$. 
Since the only nonzero entries of column $\col_q$ in $\commG$ are the ones in rows $i_q$ and $n+\row_q$, we have that $v(\underline{X})_{tq}$ can only be nonzero if $r_t=i_q$. By construction, we have that $r_t \notin \{i_t, \dots, i_{\omega}\}$. Consequently, if $t \leq q$, then $r_t \neq i_q$. It follows that $v(\underline{X})_{tq}=0$ if $t \leq q$ as desired. 
Similarly, we see that $W(\underline{X})=(w(\underline{X})_{ij})$ satisfies $w(\underline{X})_{ij}=0$ whenever $i \leq j$.
\end{proof}

\begin{Proof}\hspace{-0.2cm}\textbf{of Proposition~\ref{claim}}. %\emph{of Proposition~\ref{claim}}.
Lemma~\ref{mostcases} shows the claim of Proposition~\ref{claim} for all cases, except for $\omega=n$ and $i_1, \dots, i_n$ all distinct, and for $\lambda=n$ and $j_1, \dots, j_n$ all distinct. Let us show the last case, the other one is analogous.

As in the proof of Lemma~\ref{mostcases}, we construct a submatrix of $\commG$ of the form~\eqref{most}. 
For $t \in [n]$, we can choose $r_t$ as in the proof of Lemma~\ref{mostcases} because $|\{i_1, \dots, i_{\omega}\}|<n$. We also set $c_t$ as in the proof of Lemma~\ref{mostcases}.

As $|\{j_1, \dots, j_n\}|=n$, we cannot choose $\row_1 \in [n]\setminus \{j_1, \dots, j_n\}$. Instead, we consider the rows $n+j_q$ of $\commG$, for $q \in [\omega]$. Denote by $\col_q$ the column of $\commG$ with $-X_{i_q}$ in the $(n+j_q)$th row.

Again, we must assure that the columns $c_t$ and $\col_q$ are all distinct. 
By construction, the indices $c_t$, for $t \in [n]$, are all distinct, and so are the indices $\col_q$, for $q\in [\omega]$. 
If the columns of indices $c_t$ and $\col_q$ coincide for some $t \in [n]$ and $q \in [\omega]$, then $r_t=i_q$ and $t=q$. However, since $r_t \notin \{i_t, \dots, i_{\omega}\}$, we can only have $r_t=i_q$ if $q<t$. 

Denote by $M(\underline{X})$ be the submatrix of $\commG$ composed of columns $c_1, \dots, c_n$, $\col_1, \dots, \col_{\omega}$ and of rows $r_1, \dots, r_n$, $n+j_1, \dots, n+j_{\omega}$ in this order. %where $\mathbf{i}=(i_1, \dots, i_{\lambda})$ and $\mathbf{j}=(j_1, \dots, j_n)$.
Then, as in Lemma~\ref{mostcases}, the submatrix $M(\underline{X})$ is of the form~\eqref{most}, however the matrix $W(T)$ is such that $w(\underline{X})_{ij}=0$ if $i\neq j$.
\end{Proof}

In particular, Proposition~\ref{claim} shows that, for each $r\in [2n]$ and each $k\in[n]$, either $X_{r}^{k}$ or $-X_{r}^{k}$ is an element of 
$F_{k}(\commG)$.
Hence, if $\mathbf{x}\in \W_{2n}$, then at least one $k\times k$-minor of $A_{\Gn}(\mathbf{x})$ has valuation zero. This gives
\begin{equation}\label{integrandg}\frac{\|F_{k}(A_{\Gn}(\mathbf{x}))\cup wF_{k-1}(A_{\Gn}(\mathbf{x}))\|_{\p}}
{\|F_{k-1}(A_{\Gn}(\mathbf{x}))\|_{\p}}=1, \text{ for all }k\in[n] \text{ and }w\in\p.\end{equation}

For $k\in \{n+1, \dots, 2n-1\}$, as explained before Lemma~\ref{mostcases}, the elements of $F_{k}(\commG)$ can be assumed to be of the form 
\[X_{i_1}\cdots X_{i_{\omega}}X_{n+j_{1}}\cdots X_{n+j_\lambda},\]
where $\omega,\lambda \in [n]_0$ satisfy $\omega+\lambda=k$, and $i_1$, $\dots$, $i_\omega$, $j_1$, $\dots$, $j_\lambda \in [n]$. 

Given $\mathbf{x}\in \W_{2n}$, set $M=v_{\p}(x_1, \dots, x_n)$ and $N=v_{\p}(x_{n+1}, \dots, x_{2n})$. Then
\begin{align*}&\left\|\bigcup_{\substack{\omega+\lambda =k\\ 0 \leq \omega, \lambda \leq n}}\{X_{i_1}\cdots X_{i_{\omega}}X_{n+j_{1}}\cdots X_{n+j_\lambda}\mid i_1, \dots, i_{\omega}, 
j_1, \dots, j_{\lambda} \in [n]\}\right\|_{\p}\\&=q^{-n\min\{M,N\}-(k-n)\max\{M,N\}}.
\end{align*}
Consequently, for $w \in \p$,
\begin{equation}\label{eqg}
\frac{\|F_{k}(A_{\Gn}(\mathbf{x}))\cup wF_{k-1}(A_{\Gn}(\mathbf{x}))\|_{\p}}{\|F_{k-1}(A_{\Gn}(\mathbf{x}))\|_{\p}}=
\begin{cases}
 \|x_1, \dots, x_n, w\|_{\p}, & \text{ if } 0=N \leq M,\\
 \|x_{n+1}, \dots, x_{2n},w\|_{\p}, & \text{ if } 0=M \leq N.\\
\end{cases} 
\end{equation}
Combining equations~\eqref{integrandg} and~\eqref{eqg} yields
\[\prod_{k=1}^{2n-1}\frac{\|F_{k}(A_{\Gn}(\mathbf{x}))\cup wF_{k-1}(A_{\Gn}(\mathbf{x}))\|_{\p}}{\|F_{k-1}(A_{\Gn}(\mathbf{x}))\|_{\p}}=
\begin{cases}
 \|x_1, \dots, x_n, w\|_{\p}^{n-1}, & \text{ if } 0=M \leq N,\\
 \|x_{n+1}, \dots, x_{2n},w\|_{\p}^{n-1}, & \text{ if } 0=N \leq M.\\
\end{cases}
\]
Consequently, the $\p$-adic integral~\eqref{ccpadic} in this case is
\begin{align*}&\int_{(w,\underline{x})\in\p\times\W_{2n}}|w|_{\p}^{(2n-1)s_1+s_2-n^2-2}\prod_{k=1}^{2n-1}
\frac{\|F_{k}(A_{\Gn}(\underline{x}))\cup wF_{k-1}(A_{\Gn}(\underline{x}))\|_{\p}^{-1-s_1}}{\|F_{k-1}(A_{\Gn}(\underline{x}))\|_{\p}^{-1-s_1}}d\mu\\
 &=2\int_{(w,x_1, \dots, x_{2n})\in \p\times\p^n\times \W_{n}}|w|_{\p}^{(2n-1)s_1+s_2-n^2-2}\|x_1, \dots,x_n,w\|_{\p}^{-(n-1)(1+s_1)}d\mu\\
 &+\int_{(w,x_1,\dots,x_{2n})\in\p\times\W_{n}\times \W_{n}}|w|_{\p}^{(2n-1)s_1+s_2-n^2-2}d\mu\\
%  \end{align*}
%  \begin{align*}
 &=\left(1-q^{-n}+2q^{-1+(n-1)s_1}-q^{n^2-ns_1-s_2}-q^{n^2-n-ns_1-s_2}\right)\cdot\\
 &\frac{(1-q^{-1})(1-q^{-n})q^{n^2+1-(2n-1)s_1-s_2}}{(1-q^{n^2+1-(2n-1)s_1-s_2})(1-q^{n^2-ns_1-s_2})},
\end{align*}
where the first and the second integrals of the second equality are calculated in Proposition~\ref{integral2} and Proposition~\ref{integral1}, respectively.
Applying this to~\eqref{ccpadic}, we obtain
\begin{align*} &\clvlzf_{G_n(\lri)}(s_1,s_2)=\\
&\frac{(1-q^{2\binom{n}{2}-ns_1-s_2})(1-q^{2\binom{n}{2}+1-(2n-1)s_2-s_2})+q^{n^2-ns_1-s_2}(1-q^{-n})(1-q^{-(n-1)(1+s_1)})}{(1-q^{n^2-s_2})(1-q^{n^2-ns_1-s_2})(1-q^{n^2+1-(2n-1)s_1-s_2})},
\end{align*}
proving Theorem~\ref{cclvl} for groups of type $G$.

\subsection{Conjugacy class zeta functions of groups of type \texorpdfstring{$H$}{H}}\label{typeH}
In this section, we prove Theorem~\ref{cclvl} for groups of type~$H$. Analogously to the previous sections, we use the descriptions~\eqref{commH} of the submatrices~$\commHm$ to determine the integrands of~\eqref{terms} and then explicitly calculate these integrals. 

In this section, we denote by $A(\underline{X})_{ij}$ the $(i,j)$-entry of~$\commH$.

By~\eqref{commH}, each column of $\commH$ is of one of the following forms:
\noindent\begin{minipage}{0.45\textwidth}
\begin{equation}\label{H1}       
\begin{matrix}
\vphantom{X}\\
    \vphantom{X}\\
    \coolleftbrace{\phantom{(n+i)}i\text{th row}}\\
    \vphantom{X}\\
    \vphantom{X}\\
    \vphantom{X}\\
    \vphantom{X}\\
    \coolleftbrace{\phantom{i}(n+i)\text{th row}}\\
    \vphantom{X}\\
    \vphantom{X}\\
    \vphantom{a}
\end{matrix}
\begin{bmatrix}
      \\
      \\
      X_{n+i}\\
       \\
       \\
       \\
       -X_{i}\\
       \\
      \\
\end{bmatrix},
\end{equation}
\end{minipage}
\hspace{0.3cm}
\begin{minipage}{0.45\textwidth}\centering
\begin{equation}\label{H2}
 \begin{matrix}
\vphantom{X}\\
    \coolleftbrace{\phantom{(n+i)}j\text{th row}}{X_{n+j}}\\
    \vphantom{X}\\
    \coolleftbrace{\phantom{(n+j)}i\text{th row}}\\
    \vphantom{X}\\
    \vphantom{X}\\
    \coolleftbrace{\phantom{i}(n+j)\text{th row}}\\
    \vphantom{X}\\
    \vphantom{X}\\
    \coolleftbrace{\phantom{j}(n+i)\text{th row}}{i}\\
    \vphantom{a}
\end{matrix}
\begin{bmatrix}
      \\
      X_{n+i} \\
       \\
       X_{n+j}\\
       \\
      -X_{i}  \\
      \\ 
      -X_{j}\\
      \\
\end{bmatrix},
\end{equation}
\end{minipage}\\
where the nondisplayed entries equal zero and $i,j \in [n]$.
Recall that $a=\rk(\g/\z)$, $b=\rk(\g')$, and that~$\commH$ is an $a \times b$-matrix. Its columns have the following symmetry: 
    \begin{equation}\label{symmetryH}	A(\underline{X})_{(n+i)j}=	\begin{cases}
									  -X_{q},&\text{ if and only if }A(\underline{X})_{ij}=X_{n+q},\\
									  0,&\text{ if and only if }A(\underline{X})_{ij}=0,
									 \end{cases}
\end{equation}
for $i \in [n]$, $j \in [b]$ and $q \in [a]$.
For each $i \in [n]$, there is exactly one column of the form~\eqref{H1}, and the columns of type~\eqref{H2} occur exactly once for each pair $i<j$ of elements of $[n]$.

\begin{lem}\label{smallnh} For $w \in \p$, $\mathbf{x}\in\W_{2n}$ and $k\in [n]$,
\[\frac{\|F_{k}(A_{\mathcal{H}_n}(\mathbf{x}))\cup wF_{k-1}(A_{\mathcal{H}_n}(\mathbf{x}))\|_{\p}}{\|F_{k-1}(A_{\Hn}(\mathbf{x}))\|_{\p}}=1.\]
\end{lem}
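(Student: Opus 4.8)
The plan is to reduce everything to a statement about which monomials occur as minors of $\commH$. Since all entries of $A_{\Hn}(\mathbf{x})$ lie in $\lri$, every minor lies in $\lri$, so $\|F_k(A_{\Hn}(\mathbf{x}))\|_\p \le 1$ for all $k$; the real content is the reverse inequality for $k \in [n]$. Granting $\|F_k(A_{\Hn}(\mathbf{x}))\|_\p = 1$ for every $k \in [n]$ (and $\|F_0\|_\p = 1$ trivially, with the convention $F_0 = \{1\}$), the lemma follows immediately: since $w \in \p$ we have $\|wF_{k-1}(A_{\Hn}(\mathbf{x}))\|_\p = |w|_\p\,\|F_{k-1}(A_{\Hn}(\mathbf{x}))\|_\p \le q^{-1} < 1$, whence the numerator equals $\max\{\|F_k(A_{\Hn}(\mathbf{x}))\|_\p,\|wF_{k-1}(A_{\Hn}(\mathbf{x}))\|_\p\} = 1 = \|F_{k-1}(A_{\Hn}(\mathbf{x}))\|_\p$, and the quotient is~$1$.

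To prove $\|F_k(A_{\Hn}(\mathbf{x}))\|_\p = 1$ for $k \in [n]$, I would establish the stronger, $\mathbf{x}$-free statement that for every $r \in [2n]$ and every $k \in [n]$ the monomial $\pm X_r^k$ occurs as a $k \times k$-minor of $\commH$ --- the analogue of the fact used for type $G$ just before~\eqref{integrandg}. Since $v_\p(\mathbf{x}) = 0$, some coordinate $x_r$ is a unit, and then $\pm x_r^k \in F_k(A_{\Hn}(\mathbf{x}))$ has $\p$-adic norm~$1$.

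For the minor statement, the symmetry~\eqref{symmetryH} lets me assume $r = n+i$ with $i \in [n]$: swapping the two variable blocks $X_l \leftrightarrow X_{n+l}$ together with the rows $l \leftrightarrow n+l$ turns $\commH$ into $-\commH$ up to a row permutation, so the sets of $k\times k$-minors of the two matrices agree up to sign while $X_i^k$ is the image of $X_{n+i}^k$ under the variable swap. Reading off the columns from~\eqref{H1}--\eqref{H2}, for each $t \in [n]$ there is a unique column $C_t$ of $\commH$ carrying $X_{n+i}$ in row $t$ --- the column indexed by the pair $\{i,t\}$ written increasingly, or by $(i,i)$ when $t = i$ --- and within rows $1, \dots, n$ its only other nonzero entry is $X_{n+t}$ in row $i$ (and there is none when $t = i$). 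If $k \le n-1$, choose any $k$-subset $R \subseteq [n]\setminus\{i\}$; the submatrix of $\commH$ on rows $R$ and columns $\{C_t \mid t \in R\}$ is then $X_{n+i}$ times the $k \times k$ identity matrix, of determinant $X_{n+i}^k$. If $k = n$, take $R = [n]$; the submatrix on rows $[n]$ and columns $\{C_t \mid t \in [n]\}$ has $X_{n+i}$ along the diagonal and all its off-diagonal entries confined to the single row~$i$, so a one-line expansion over permutations shows its determinant equals $X_{n+i}^n$.

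The only subtleties are bookkeeping: verifying that the columns $C_t$ are pairwise distinct and index valid pairs of $\DL$, that restricting to the chosen rows indeed kills the unwanted entries, and the permutation count in the case $k = n$. I expect the mild awkwardness of the $k = n$ case to require the most care, but it is not a genuine obstacle --- the structural input that every variable $X_r$ sits alone on a suitable diagonal of some submatrix is precisely what the block description~\eqref{commH} delivers.
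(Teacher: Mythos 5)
Your proof is correct and follows essentially the same strategy as the paper: exhibit, for a unit coordinate $x_r$ of $\mathbf{x}$, a $k\times k$-minor of $A_{\Hn}(\mathbf{x})$ equal to $\pm x_r^k$, so that $\|F_k(A_{\Hn}(\mathbf{x}))\|_\p = 1$ for all $k\in[n]$ and the quotient is forced to be $1$. Where the paper fixes $m$ and builds the single $n\times n$-submatrix $L_m$ with determinant $X_{n+m}^n$, then extracts unit $k\times k$-minors via the rank argument, you construct the $k\times k$-minors directly; these are the same submatrices. One small point in your favour: the paper only constructs $L_m$ with determinant $X_{n+m}^n$ for $m\in[n]$ and asserts that some $L_{m_0}(\mathbf{x})$ has full rank, which as literally stated requires a unit among $x_{n+1},\dots,x_{2n}$; your explicit appeal to the symmetry~\eqref{symmetryH} to reduce $r\in[n]$ to $r\in\{n+1,\dots,2n\}$ closes this gap cleanly.
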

\begin{proof}
Fix $m \in [n]$. We now construct a submatrix of $\commH$ whose determinant is $X_{n+m}^{n}$. 

For each $t \in [m-1]$, denote by $c_t$ the index of the unique column of $\commH$ which has $X_{n+m}$ in the row of index~$t$.
Recall that $\commHm$ is the submatrix of $\commH$ given in~\eqref{commH}. 
The $n \times n$-submatrix $L_m(\underline{X})$ of $\commH$ composed of columns $c_1$, $\dots$, $c_{m-1}$ and the columns of $\commHm$ and of rows $1, \dots, n$ is
\vspace{0.7cm}
\[
 L_m(\underline{X})=\begin{bmatrix}[c c c c| c c c c]
 \coolover{c_1}{X_{n+m}} & \coolover{c_2}{~} & & \coolover{c_{m-1}}{~} & \coolover{\commHm}{\phantom{X_m0}\phantom{_{n+}}&\phantom{X_{n}0}\phantom{_{m+1}}&\phantom{0 \dots 0}&\phantom{X|}\phantom{2n}}\\
	     & X_{n+m} 	&  &  & & & & \\
	     &	    	& \ddots &  &  &  & &\\
	     &  	&  & X_{n+m} &  &  &  &\\
     X_{n+1} & X_{n+2} 	& \dots & X_{n+m-1} & X_{n+m} & X_{n+m+1} & \dots & X_{2n}\\
	     &  	&  &  &  & X_{n+m} 	&  	 &\\
	     &  	&  &  &  &  		& \ddots &  \\
	     &  	&  &  &  &  		&	 &X_{n+m}\\
 \end{bmatrix},
\] which has determinant $X_{n+m}^{n}$, as desired.

If $\mathbf{x}\in\W_{2n}$, there exists $m_0 \in [n]$ such that the matrix $L_{m_0}(\mathbf{x})$ has maximal rank $n$. That is, for each $k \in [n]$, at least one of the 
$k\times k$-minors of $L_{m_0}(\mathbf{x})$ is a unit. Since the $k\times k$-minors of $L_{m_0}(\mathbf{x})$ are elements of $F_{k}(A_{\mathcal{H}_n}(\mathbf{x}))$, the result 
follows.
\end{proof}

We now determine the sets $F_{k}(A_{\mathcal{H}_n}(\underline{X}))$ for $k >n$. 
In the next lemma, we show that all elements of such sets are linear combinations of products involving the minors 
\[M_{ij}(\underline{X}):=X_{i}X_{n+j}-X_{j}X_{n+i}\] of the matrix
\[M(X_1, \dots, X_{2n})= \left[ \begin {array}{cccc} X_1&X_2&\dots&X_n\\ \noalign{\medskip}X_{n+1}&X_{n+2}&\dots&X_{2n}\end {array} \right]\in
\Mat_{2\times n}(\lri[X_{1},\dots,X_{2n}]).\]

\begin{lem}\label{lemcommH}
Let $k=n+l$, for some $l \in [n-1]$. Then the nonzero elements of $F_{k}(A_{\mathcal{H}_n}(\underline{X}))$ are sums of terms of the form
\[X_{f_1}\dots X_{f_{\omega}}M_{i_1j_1}(\underline{X})\dots M_{i_{\lambda}j_{\lambda}}(\underline{X}),\]
for $i_1, \dots, i_{\lambda},j_1, \dots, j_{\lambda} \in [n]$, and $f_1, \dots, f_{\omega} \in [2n]$, where $\omega+2\lambda=k$ and $\lambda\geq l$.
\end{lem}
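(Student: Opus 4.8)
The plan is to exploit the block structure of $\commH$ relative to the partition of the row index set $[2n]$ into the $n$ pairs $P_i=\{i,n+i\}$, $i\in[n]$. The key observation, read off directly from \eqref{H1}, \eqref{H2} and the column symmetry \eqref{symmetryH}, is that for every column $c$ of $\commH$ the restriction of $c$ to the two rows of a given pair $P_i$ is either the zero vector or a vector of the shape $(X_{n+q},-X_q)^{\textup{tr}}$ for a single $q\in[n]$: namely $q=i$ when $c$ is the ``diagonal'' column indexed by $(i,i)$, and $q$ equal to the other endpoint when $c$ is indexed by a pair $(i,j)$ with $j\neq i$; moreover each column meets at most two of the pairs $P_i$. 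It follows that for any two columns $c,c'$ and any $i$, the $2\times 2$ submatrix of $\commH$ on the rows of $P_i$ and the columns $\{c,c'\}$ has the form $\left(\begin{smallmatrix} X_{n+q}&X_{n+q'}\\ -X_q&-X_{q'}\end{smallmatrix}\right)$ (possibly with a zero column), whose determinant equals $\pm M_{qq'}(\underline{X})$ or $0$.

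First I would fix subsets $R\subseteq[2n]$ and $C\subseteq[b]$ with $|R|=|C|=k=n+l$ and $\det\commH[R,C]\neq 0$; the minors $\det\commH[R,C]$ obtained this way are precisely the nonzero elements of $F_k(\commH)$, so it suffices to treat one such. Let $d$ be the number of pairs $P_i$ with $P_i\subseteq R$, let $s$ be the number of pairs meeting $R$ in exactly one row, and let $o$ be the number of pairs disjoint from $R$. Then $d+s+o=n$ and $2d+s=k=n+l$, so that $d=l+o\geq l$; this is the source of the bound $\lambda\geq l$. I would then expand $\det\commH[R,C]$ by the generalised Laplace expansion along the ordered partition of the $k$ rows of $R$ induced by $P_1,\dots,P_n$ (equivalently, reorder those rows so that the selected rows of each $P_i$ are contiguous, at the cost of one global sign, and expand along these at most $n$ row blocks). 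Up to sign, each resulting summand is a product $\prod_i\det\commH[R\cap P_i,\,C_i]$ taken over all ways of writing $C=\bigsqcup_i C_i$ with $|C_i|=|R\cap P_i|$. A block with $R\cap P_i=P_i$ contributes a $2\times 2$ determinant, hence $\pm M_{qq'}(\underline{X})$ or $0$ by the first paragraph; a block with $|R\cap P_i|=1$ contributes a single entry, i.e.\ $\pm X_f$ with $f\in[2n]$; blocks disjoint from $R$ do not occur.

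Discarding the vanishing summands, every surviving summand is of the form $\pm X_{f_1}\cdots X_{f_\omega}M_{i_1j_1}(\underline{X})\cdots M_{i_\lambda j_\lambda}(\underline{X})$ with exactly $\lambda=d$ minor-factors and $\omega=s=k-2d$ linear factors, so $\omega+2\lambda=k$ and $\lambda=d\geq l$; summing over the column distributions $C=\bigsqcup_i C_i$ gives the asserted description of $\det\commH[R,C]$. The argument is essentially bookkeeping, and the one point that needs care is the verification in the first paragraph that every $2\times 2$ block arising in the expansion is $\pm M_{qq'}(\underline{X})$ or $0$, rather than some other quadratic form --- this is exactly what the shape $(X_{n+q},-X_q)^{\textup{tr}}$ of the column restrictions together with the symmetry \eqref{symmetryH} ensures --- and the complementary remark that a summand in which a column is assigned to a pair that it does not meet contributes $0$ and may be dropped.
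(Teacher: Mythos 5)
Your argument is correct and is essentially the paper's: both rest on the pigeonhole count showing that at least $l$ of the row pairs $\{i,n+i\}$ are wholly contained in the chosen row set of a $k\times k$ submatrix, a Laplace-type expansion of the minor over row pairs, and the column symmetry~\eqref{symmetryH} forcing each paired $2\times 2$ block to be $0$ or $\pm M_{qq'}(\underline{X})$. The only (cosmetic) difference is that you expand along the natural blocks $R\cap\{i,n+i\}$ via the generalised Laplace expansion, treating unpaired rows as singleton blocks contributing single entries $\pm X_f$, whereas the paper routes the computation through its Lemma~\ref{Minors}, pairing the remaining rows arbitrarily so that they contribute factors $\pm X_uX_v$; the resulting description of the summands is the same.
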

\begin{proof} 
Lemma~\ref{Minors} describes each element of $F_{k}(\commH)$ in terms of sums of products of $2 \times 2$-minors of $\commH$.  It then suffices to show that these minors are all either $0$, $\pm X_uX_v$ or $M_{ij}(\underline{X})$, for some $u,v \in [2n]$ and $1\leq i < j \leq n$.

Let $G(\underline{X})=(G(\underline{X})_{ij})$ be a $k \times k$-submatrix of $\commG$. 
Since $k=n+l$, there are at least~$l$ pairs of rows of $G(\underline{X})$ whose indices in $\commH$ are of the form $t$ and $n+t$, for some $t \in [n]$. 
Denote by $\lambda$ the exact number of such pairs of rows occurring in $G(\underline{X})$, and assume that, for $m \in \{1, 3, \dots, 2\lambda-1\}$, the $m$th and the $(m+1)$th rows of $G(\underline{X})$ correspond, respectively, to rows of indices of the form $t$ and $n+t$ in $\commH$, for some $t \in [n]$.
In this case, the symmetry~\eqref{symmetryH} of columns of~$\commH$ assures that
 \[G(\underline{X})_{ij}=0 \text{ if and only if } G(\underline{X})_{(i+1)j}=0,\]
for all $i\in \{1, 3, \dots, 2\lambda-1\}$ and $j \in [b]$ (recall that $\commH$ is an $a \times b$-matrix).
Therefore, for $k_1,k_2\in[b]$ distinct and $m \in \{1,3, \dots, 2\lambda-1\}$, the minor 
\[{G}(\underline{X})_{(m,m+1),(k_1,k_2)}=G(\underline{X})_{mk_1}G(\underline{X})_{(m+1)k_2}-G(\underline{X})_{mk_2}G(\underline{X})_{(m+1)k_1}\] 
is either $0$ or $M_{ij}(\underline{X})$, for some 
$1 \leq i < j \leq n$, as the columns of this minor are either of the form $(0,0)^{\textup{T}}$ or $(X_{n+i},-X_{i})^{\textup{T}}$, for some $i \in [n]$.

For $i,j \in [n]$ distinct, there is at most one column of $\commG$ whose nonzero rows are the ones of indices in $\{i,j,n+i,n+j\}$, it follows that each of the remaining minors of 
$G$ are either equal to $0$ or $\pm X_iX_j$, for some $i,j \in [2n]$.
\end{proof}

Fix $k=n+l$ for some $l \in [n-1]$. We want to describe $\|F_{k}(A_{\Hn}(\mathbf{x}))\|_{\p}$ in a way that allows us to compute the integral~\eqref{ccpadic} for groups of type~$H$. 
%In Lemma~\ref{commG}, the elements of $F_{k}(A_{\Hn}(\mathbf{x}))$ are described as sums of terms of the form
%\[X_{f_1}\dots X_{f_{\omega}}M_{i_1j_1}(\underline{X})\dots M_{i_{\lambda}j_{\lambda}}(\underline{X}),\]
%for certain subindices. 

Let $\mathbf{x}=(x_1, \dots, x_{2n})\in \W_{2n}$. Then there exists $f_0 \in [2n]$ such that $v_\p(x_{f_0})=0$. 
Fix positive integers $\omega$ and $\lambda$ satisfying $k=\omega+2\lambda$ and $\lambda \geq l$ as in Lemma~\ref{lemcommH}. Let $i_1, \dots, i_{\lambda}, j_1, \dots, j_{\lambda} \in [n]$ with $1 \leq i_t < j_t \leq n$ for all $t \in[\lambda]$. Then 
\begin{align}\|x_{f_1}\dots x_{f_{\omega}}M_{i_1j_1}(\textbf{x})\dots M_{i_{\lambda}j_{\lambda}}(\textbf{x})\|_{\p} & \leq \|x_{f_0}^{\omega}M_{i_1j_1}(\textbf{x})\dots M_{i_{\lambda}j_{\lambda}}(\textbf{x})\|_{\p} \nonumber \\
\label{f0}&=\|M_{i_1j_1}(\textbf{x})\dots M_{i_{\lambda}j_{\lambda}}(\textbf{x})\|_{\p},
\end{align}
for all choices of $f_1, \dots, f_{\omega} \in [2n]$. 

Now, let $i_0, j_0 \in [n]$ be distinct and satisfy $\|\{M_{ij}(\mathbf{x}) \mid 1 \leq i \leq j \leq n\}\|_{\p}=\|M_{i_0j_0}(\mathbf{x})\|_{\p}$, that is, the minor $M_{i_0j_0}(\textbf{x})$ has minimal $\p$-adic valuation among all $M_{ij}(\textbf{x})$.
Then it is clear that 
\begin{equation}\label{i0j0}\|M_{i_1j_1}(\textbf{x}) \dots M_{i_{\lambda}j_{\lambda}}(\textbf{x})\|_{\p} \leq \|M_{i_0j_0}(\textbf{x})\|_{\p}^{\lambda},\end{equation}
for all choices of $i_1, \dots i_{\lambda}$, $j_1, \dots, j_{\lambda} \in [n]$ with $i_q < j_q$ for $q \in [\lambda]$. 

As a consequence of~\eqref{f0} and~\eqref{i0j0}, we obtain that 
\[\|x_{f_1}\dots x_{f_{\omega}}M_{i_1j_1}(\textbf{x})\dots M_{i_{\lambda}j_{\lambda}}(\textbf{x})\|_{\p} \leq \|M_{i_0j_0}(\textbf{x})\|_{\p}^{\lambda},\]
for all choices of $i_1, \dots i_{\lambda}$, $j_1, \dots, j_{\lambda} \in [n]$ and of $f_1, \dots, f_{\omega} \in [2n]$. 

Thus if, for all $f \in [2n]$ and distinct $i,j \in [n]$, the element $X_{f}^{\omega} (M_{ij}(\underline{X}))^{\lambda}$ belongs to $F_k(A_{\Hn}(\underline{X}))$, we must have 
\begin{equation}\label{intH}\| F_{k}(A_{\Hn}(\mathbf{x})) \|_{\p}= \| M_{i_0j_0}(\textbf{x})\|_{\p}^{\lambda}= \|\{M_{ij} \mid 1 \leq i < j \leq n\}\|_{\p}^{\lambda},\end{equation} 
for the minimal choice of value of~$\lambda$. Since we require $\omega+2\lambda=k$ and $\lambda \geq l$, the minimal value is $\lambda=l$, which occurs when $\omega=k-2l$.

It then suffices to show that all terms of the form $X_{f}^{\omega}M_{ij}(\underline{X})^{l}$ are elements of $F_{k}(A_{\mathcal{H}_n}(\underline{X}))$, where $\omega=k-2l$.

\begin{pps}
Given $l \in [n-1]$, let $k=n+l$ and $\omega=k-2l$. Then, for all $f \in [2n]$ and $1 \leq i < j \leq n$, either $X_{f}^{\omega}M_{ij}(\underline{X})^l$ or $-X_{f}^{\omega}M_{ij}(\underline{X})^l$ is an element of $F_{k}(A_{\mathcal{H}_n}(\underline{X}))$.
\end{pps}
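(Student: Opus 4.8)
The plan is to construct, for each fixed $f \in [2n]$ and each pair $1 \le i < j \le n$, an explicit $(n+l) \times (n+l)$-submatrix of $A_{\mathcal{H}_n}(\underline{X})$ whose determinant equals $\pm X_f^{\omega} M_{ij}(\underline{X})^l$, where $\omega = k - 2l$. The structure of this submatrix will mirror the block shape~\eqref{most} used in the proofs of Lemma~\ref{mostcases} and Proposition~\ref{claim}, but now combining \emph{two} kinds of ingredients: a diagonal block of size $l$ built from columns of type~\eqref{H2} restricted to the row-pairs $\{i_t, n+i_t\}$ and $\{j_t, n+j_t\}$ (so that each $2 \times 2$ sub-block contributes a factor $M_{ij}(\underline{X})$, as in the proof of Lemma~\ref{lemcommH}), together with a diagonal block of size $\omega$ built from columns of type~\eqref{H1} or~\eqref{H2} contributing a factor $X_f$ in each diagonal slot. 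I would carry this out by first handling the case where $f \in [n]$ (so $X_f$ is the negated entry in a row of index $\ge n+1$) and then the case $f \in \{n+1, \dots, 2n\}$ (so $X_f = X_{n+q}$ appears in a row of index $\le n$), which are symmetric via~\eqref{symmetryH}.

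\textbf{Main steps.} First I would select $l$ columns of type~\eqref{H2}: for $t \in [l]$ pick distinct pairs $(i_t, j_t)$ all equal to the \emph{same} pair $(i,j)$ is impossible since a type-\eqref{H2} column is unique for each unordered pair, so instead I take a single column of type~\eqref{H2} indexed by $(i,j)$ together with the rows $i, n+i, j, n+j$, which already gives one factor $M_{ij}(\underline{X})$ from the $2\times 2$ minor on rows $\{i, n+i\}$ versus columns. To get the $l$-th power I instead build a block-triangular matrix whose $l$ diagonal $2\times 2$ blocks are each $\left(\begin{smallmatrix} X_{n+i} & \ast \\ -X_i & \ast\end{smallmatrix}\right)$-type, so I need $l$ \emph{distinct} columns each restricting to $(X_{n+i}, -X_i)^{\mathrm{T}}$ on a chosen row-pair; these are precisely the columns of type~\eqref{H2} indexed by $(i, j')$ for $l$ distinct values of $j'$ — which requires $l \le n-1$, exactly our hypothesis, provided we arrange the row indices so the off-diagonal entries sit strictly below the diagonal. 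Second, for the $X_f$-block of size $\omega = k - 2l = n - l$: after removing the rows and columns already used, I pick $\omega$ further columns (of type~\eqref{H1} when $f \le n$, reading off a $-X_f$ entry; or type~\eqref{H2} restricted to a single surviving row when $f = n+q$) placed so the submatrix is lower-triangular with $X_f$ (or $-X_f$) on the diagonal, using a counting argument — as in Lemma~\ref{mostcases} — to ensure all chosen columns are distinct and all chosen rows are distinct. Third, I would verify the combined submatrix is block lower-triangular (up to a permutation of rows/columns), so its determinant is the product of the two diagonal-block determinants, namely $\pm M_{ij}(\underline{X})^l \cdot (\pm X_f)^{\omega}$. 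Finally, since this determinant is a nonzero $(n+l)\times(n+l)$-minor of $A_{\mathcal{H}_n}(\underline{X})$, it lies in $F_k(A_{\mathcal{H}_n}(\underline{X}))$ up to sign, completing the proof.

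\textbf{Main obstacle.} The delicate point is the simultaneous disjointness bookkeeping: I must choose the row-pairs $\{i, n+i\}, \{j_1, n+j_1\}, \dots, \{j_l, n+j_l\}$ for the $M_{ij}$-block and the $\omega$ extra rows for the $X_f$-block so that (a) all row indices used are distinct, (b) all column indices used are distinct, and (c) the non-diagonal nonzero entries of each block fall strictly on one side of the diagonal, so that the whole matrix is genuinely (block) triangular rather than merely having the right diagonal. This is exactly the kind of inductive index-selection argument carried out in the proof of Lemma~\ref{mostcases} (the $c_t \ne \col_q$ analysis), and I expect the bulk of the writing to be a careful re-run of that argument adapted to the symmetry~\eqref{symmetryH} of type-$H$ columns and to the extra constraint that the pair $(i,j)$ is re-used $l$ times in the $M_{ij}$-block. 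The cases $f \in [n]$ versus $f \in \{n+1,\dots,2n\}$, and the sub-case $j \in \{i_1,\dots\}$ overlaps, will each need to be checked, but once the triangular structure is secured the determinant computation is immediate.
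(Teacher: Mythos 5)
Your overall strategy --- build a block-triangular $(n+l)\times(n+l)$ submatrix of $A_{\Hn}(\underline{X})$ whose determinant factors as $\pm X_{f}^{\omega}M_{ij}(\underline{X})^{l}$, then use the symmetry~\eqref{symmetryH} to swap $X_f \leftrightarrow X_{n+f}$ --- is exactly what the paper does, and you correctly identify the index bookkeeping as the crux. However, your sketch contains two concrete slips. First, the $M_{ij}^l$-block is $2l\times 2l$, and each of its $l$ diagonal $2\times 2$ sub-blocks requires \emph{two} columns: one restricting to $(X_{n+i},-X_{i})^{\textup{T}}$ on the row-pair $\{\row_q, n+\row_q\}$ (the column indexed by the pair $\{i,\row_q\}$) and a second restricting to $(X_{n+j},-X_{j})^{\textup{T}}$ (indexed by $\{j,\row_q\}$). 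You count ``$l$ distinct columns'' where $2l$ are needed, and the governing constraint is not ``$l\le n-1$'' but the identity $\omega+l=n$, together with the requirement that $\row_1,\dots,\row_l$ avoid the $\omega$ row indices $r_1,\dots,r_\omega$ already reserved for the $X_{n+f}$-block. Second, the $\omega\times\omega$ block is not assembled from $\omega$ columns ``of type~\eqref{H1}'': only the single column indexed by $f$ is of type~\eqref{H1}, while the remaining $\omega-1$ are of type~\eqref{H2} indexed by pairs $\{f,r_t\}$, reading $X_{n+f}$ in row $r_t$ and $-X_f$ in row $n+r_t$.

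The more substantive omission is that for $\omega\le 2$ (i.e.\ $l\ge n-2$) and for $f\in\{i,j\}$ the two blocks \emph{cannot} be kept row-disjoint: $[n]$ is too small to accommodate $r_1=f$, $r_2=i$, $r_3=j$ together with $l$ further distinct $\row_q$'s. The paper's proof resolves this by forcing $\row_1=j$ when $\omega=2$ (resp.\ $\row_1=i,\row_2=j$ when $\omega=1$) and then exhibiting a different matrix $R(\underline{X})$ --- see~\eqref{case3} and the explicit $5\times 5$ computation in the $\omega=1$ case --- which is no longer block-diagonal but is still block upper-triangular with the correct $2\times 2$ diagonal blocks, hence still has determinant $M_{ij}(\underline{X})^l$. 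Your proposal treats these as routine ``sub-cases to be checked,'' but in fact the naive fully-disjoint construction fails there and a structurally different submatrix has to be produced; that casework ($\omega\ge 3$, $\omega=2$, $\omega=1$, and separately $f\in\{i,j\}$) is where most of the paper's proof actually lives.
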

\begin{proof}
 We first show that $X_{n+f}^{\omega}M_{ij}(\underline{X})^l$ lie in $F_{k}(A_{\mathcal{H}_n}(\underline{X}))$, for all $f, i, j \in [n]$ with $i<j$. We divide the proof in the cases $f \notin \{i,j\}$ and $f \in \{i,j\}$.

\noindent
\textbf{Case f $\mathbf{\notin \{i,j\}}$.} Let us construct a submatrix of $\commH$ whose determinant is $X_{n+f}^{\omega}M_{ij}(\underline{X})^{l}$ for fixed $1 \leq i < j \leq n$ and $f \in [n] \setminus \{i,j\}$. Assume first that $\omega >1$.

Firstly, we choose~$k$ rows and~$\omega$ columns of $\commH$ such that the $k \times \omega$-submatrix of $\commH$ composed of such rows and columns is 
 
\begin{equation}\label{kbyr}
	\begin{bmatrix}[cccc]
			X_{n+f} & X_{n+r_2} & \dots & X_{n+r_m}  \\
			&X_{n+f}&&\\
			&&\ddots&\\
			&&&X_{n+f}\\\hline
			\multicolumn{4}{c}{\multirow{4}{*}{\Huge 0}}\\
			&&&\\
			&&&\\
			&&&\\
\end{bmatrix}.
\end{equation}

Fix $r_1=f$, $r_2=i$ and $r_3=j$. If $\omega > 3$, then fix inductively $r_t \in [n] \setminus \{r_1, \dots, r_{t-1}\}$, for $t \in \{4, \dots, \omega\}$. 
If $\omega=2$, that is, if we want a determinant of the form $X_{f}^{2}M_{ij}(\underline{X})^l$, we fix $\row_1=j=r_3$.
Otherwise, fix $\row_1 \in [n] \setminus \{r_1, \dots, r_{\omega}\}$.
Inductively, we set $\row_q \in [n]\setminus \{r_1, \dots, r_{\omega}, \row_1, \dots, \row_{q-1}\}$, for all $q \in \{2, \dots, l\}$. 

Let $c_t$ be the index of the unique column of $\commH$ with $X_{n+f}$ in the $r_t$th row, for all $t \in [\omega]$. We claim that the submatrix~$S(\underline{X})$ of $\commH$ composed of rows $r_1, \dots, r_{\omega}$, $\row_1$, $n+\row_1$, $\dots$, $\row_l$, $n+\row_l$ and columns $c_1, \dots, c_{\omega}$, in this order, is of the form~\eqref{kbyr}. 

In fact, column~$c_1$ is of the form~\eqref{H1} in $\commH$, so that its nonzero entries are the ones of index $f=r_1$ and $n+f$. 
The nonzero entry of index $n+f$ is not on rows $n+\mathcal{R}_q$ for any $q \in [l]$ since all $\row_q$ are distinct from~$f$. It follows that the only nonzero entry of $c_1$ in $S(\underline{X})$ is the one of the first row, corresponding to row $r_1$ in $\commH$. 

For $t \in \{2, \dots, \omega\}$, the column of $\commH$ of index $c_t$ is of the form~\eqref{H2}. Thus, its nonzero rows are the ones of index in $\{r_t, f, n+r_t, n+f\}$. Again, since $\row_q \notin \{f, r_t\}$, the rows of indices $n+r_t$ and $n+f$ are not rows of $S(\underline{X})$. Then, the only nonzero rows of $c_t$ in $S(\underline{X})$ are the $t$th and the first rows, corresponding to rows $r_t$ and $r_1=f$ of $\commH$, with entries~$X_{n+f}$ and~$X_{n+r_t}$, respectively.

Now, let $\coli_{q}$ be the unique column of $\commH$ with $X_{n+i}$ in the $\row_q$th row, and let $\colj_{q}$ be the unique column of $\commH$ with $X_{n+j}$ in the $\row_q$th row. Equivalently, $\coli_q$ and $\colj_q$ are the unique columns with $-X_{i}$, respectively $-X_j$, in row $n+\row_q$.

The submatrix of~$\commH$ composed of the rows of $S(\underline{X})$ and columns $c_1, \dots, c_{\omega}$, $\coli_{1}$, $\colj_{1}, \dots, \coli_{l}$, $\colj_{l}$ is

\begin{equation}\label{case1}
\begin{bmatrix}[cccc|cccc]
			X_{n+f} & X_{n+r_2} & \dots & X_{n+r_m} & \multicolumn{4}{c}{\multirow{4}{*}{\Huge $\ast$}} \\
			&X_{n+f}&&&&&&\\
			&&\ddots&&&&&\\
			&&&X_{n+f}&&&&\\
			\hline
			\multicolumn{4}{c|}{\multirow{4}{*}{\Huge 0}}& \multicolumn{4}{c}{\multirow{4}{*}{\Huge R(\underline{X})}}\\
			&&&&&&&\\
			&&&&&&&\\
			&&&&&&&\\
\end{bmatrix},
\end{equation}
 which has determinant $X_{n+f}^{\omega}\det(R(\underline{X}))$. 
Let us now show that \[\det(R(\underline{X}))=M_{ij}(\underline{X})^l.\] 

For $\omega \geq 3$, we claim that
\begin{equation}\label{case2}
R(\underline{X})=\begin{bmatrix}[ccccc]
			X_{n+i}&X_{n+j}&&&\\
			-X_i&-X_j&&&\\
			&&\ddots&&\\
			&&&X_{n+i}&X_{n+j}\\
			&&&-X_{i}&-X_{j}\\
\end{bmatrix}.
\end{equation}

In fact, for $\row_q \notin \{i,j\}$, we have that $\coli_{q}$ and $\colj_{q}$ are of the form~\eqref{H2} in $\commH$. 
Thus, the nonzero entries of~$\coli_q$ in $\commH$ are the ones of index in $\{i,\row_q, n+i, n+\row_q\}$, and the ones of $\colj_q$ are the ones in $\{j, \row_q, n+j, n+\row_q\}$. It follows that the only nonzero entries of $\coli_{q}, \colj_q$ in $R(\underline{X})$ are the ones corresponding to the rows of index $\row_q$ and $n+\row_q$ of $\commH$, since there is no $u \in [l]$ such that $\row_{u}\in \{i,j\}$.

Let us now determine $R(\underline{X})$ for $\omega=2$. In this case, we have $\row_1 =j$, so that $\colj_{j}$ is of the form~\eqref{H1} in $\commH$ and hence its nonzero entries in $R(\underline{X})$ are the ones of index in $\{j, n+j\}$. Column $\colj_i$ is of the form~\eqref{H2}. Its nonzero entries in $\commH$ are the ones of indices in $\{i,j,n+i,n+j\}$. Since none of the $\row_q$ is~$i$, it follows that the only nonzero entries of column $\colj_i$ in $R(\underline{X})$ are the ones corresponding to the rows of $\commH$ with indices in $\{j, n+j\}$. 

Analogously to the previous case, we have that $\colj_{q}$ is of the form~\eqref{H2} in $\commH$, for $\row_q \neq j$. Thus, the nonzero entries of $\colj_{q}$ in $\commH$ are the ones corresponding to the rows of $\commH$ of indices in $\{j,\row_q, n+j, n+\row_q\}$. It follows that 
\begin{equation}\label{case3}R(\underline{X})=
\begin{bmatrix}[cc|cc|c|cc]
			X_{n+i}&X_{n+j}&0 & X_{n+\row_2}& \dots & 0 & X_{n+\row_l}\\
			-X_i&-X_j& 0 & -X_{n+\row_2}& & 0 & -X_{\row_l} \\
			\hline 
			&&X_{n+i}&X_{n+j}&&&\\
			&&-X_{i}&-X_{j}&&&\\
			\hline
			&&&&\ddots &&\\
			\hline
			&&&&&X_{n+i}&X_{n+j}\\
			&&&&&-X_{i}&-X_{j}\\
\end{bmatrix}.
\end{equation}
Since the determinant of the matrix above is $M_{ij}(\underline{X})^l$, the result follows for $\omega=2$.

Let us now consider the case $\omega=1$, that is, let us construct a submatrix of~$\commH$ with determinant $X_fM_{ij}(\underline{X})^l$ with $f \notin \{i,j\}$.
Set $r_1=f$, $\row_1=i$ and $\row_2=j$. Inductively, fix $\row_q \in [n]\setminus \{r_1, \row_1, \dots, \row_{q-1}\}$, for all $q\in \{3, \dots, l\}$.

Denote by $c_{1}^{i}$ and $c_{1}^{j}$ the indices of the columns of $\commH$ containing, respectively, $X_{n+i}$ and $X_{n+j}$ in the $r_1$th row. Denote by $\coli_{q}$ and by $\colj_{q}$ the indices of the columns of $\commH$ containing, respectively, $X_{n+i}$ and $X_{n+j}$ in the $\row_q$th row, for each $q \in [l]$.
There are only $2l-1$ indices $\colj_{q}$ and $\colj_{q}$ in total, since $\colj_{i}=\coli_{j}$.

Similar arguments as the ones of the former cases show that the matrix composed of rows $r_1$, $\row_1$, $n+\row_1, \dots, \row_l$, $n+\row_l$ and columns $c_{1}^{i}$, $c_{1}^{j}$, $\coli_{1}$, $\colj_{1}$, $\coli_2$, $\colj_{2}, \dots, \coli_{l}$, $\colj_{l}$, in this order, is
\[
\begin{bmatrix}[cc|cc|c|cc|c|cc]
			X_{n+i}& X_{n+j} & \multicolumn{2}{c|}{\multirow{1}{*}{\Large 0}}& 0 & \multicolumn{2}{c|}{\multirow{1}{*}{\Large 0}} &  & \multicolumn{2}{c}{\multirow{1}{*}{\Large 0}} \\
 			\hline
  			X_{n+f}&0&X_{n+i}&X_{n+j}&\multicolumn{1}{c|}{\multirow{2}{*}{\Large 0}}&X_{n+\mathcal{R}_3}&0& & X_{n+\mathcal{R}_l}&0\\
  			-X_f&0&-X_{i}&-X_{j}&&-X_{\mathcal{R}_3}&0& &-X_{\mathcal{R}_l}&0\\
  			\hline
  			0&X_{n+f}&0&X_{n+i}&X_{n+j}&0&X_{n+\mathcal{R}_3}& & 0&X_{n+\mathcal{R}_l}\\
  			0&-X_{f}&0&-X_i&-X_j&0&-X_{\mathcal{R}_3}&&0&-X_{\mathcal{R}_{l}}\\
  			\hline
  			\multicolumn{2}{c|}{\multirow{2}{*}{\Large 0}}&\multicolumn{2}{c|}{\multirow{2}{*}{\Large 0}}&\multicolumn{1}{c|}{\multirow{2}{*}{\Large 0}}&X_{n+i}&X_{n+j}& &\multicolumn{2}{c}{\multirow{2}{*}{\Large 0}}\\
  			&&&&&-X_i&-X_j&&&\\
  			\hline
  			&&&&&&&\multicolumn{1}{c|}{\multirow{2}{*}{$\ddots$}}&&\\
  			&&&&&&&&&\\
  			\hline
  			\multicolumn{2}{c|}{\multirow{2}{*}{\Large 0}}&\multicolumn{2}{c|}{\multirow{2}{*}{\Large 0}}&\multicolumn{1}{c|}{\multirow{2}{*}{\Large 0}}&\multicolumn{2}{c|}{\multirow{2}{*}{\Large 0}}&&X_{n+i}&X_{n+j}\\
  			&&&&&&&&-X_{i}&-X_{j}\\
\end{bmatrix}.
\]
The determinant of such matrix is 
\[M_{ij}(\underline{X})^{l-2} \det\left(\begin{bmatrix}[cc|cc|c]
			X_{n+i}& X_{n+j} & 0&0&0\\
 			\hline
  			X_{n+f}&0&X_{n+i}&X_{n+j}&0\\
  			-X_f&0&-X_{i}&-X_{j}&0\\
  			\hline
  			0&X_{n+f}&0&X_{n+i}&X_{n+j}\\
  			0&-X_{f}&0&-X_i&-X_j\\
\end{bmatrix}
 \right)=X_{n+f}M_{ij}(\underline{X})^l.\]

\noindent
\textbf{Case f $\mathbf{\in \{i,j\}}$.} Let us construct a submatrix of $\commH$ whose determinant is $X_{f}^{\omega}M_{ij}(\underline{X})^{l}$ for fixed $1 \leq i < j \leq n$ and $f \in \{i,j\}$. 

Set $r_1=f$ and $r_2 \in \{i,j\} \setminus \{f\}$. If $\omega \geq 3$, fix inductively $r_t \in [n]\setminus \{r_1, \dots, r_{t-1}\}$ for $t \in \{3, \dots, \omega\}$. 
If $\omega=1$, set $\row_1 \in \{i,j\} \setminus \{f\}$. Otherwise, set $\row_1 \in [n]\setminus \{r_1, \dots, r_{\omega}\}$. 
Fix inductively $\row_q \in [n]\setminus \{r_1, \dots, r_{\omega}, \row_1, \dots, \row_{q-1}\}$ for each $q \in \{2, \dots,l\}$. 
For each $t \in [\omega]$, denote by $c_t$ the index of the unique column of $\commH$ containing $X_{n+f}$ in the $r_t$th row. 
Moreover, for each $q \in [l]$, denote by $\coli_{q}$ and by $\colj_{q}$ the indices of the columns of $\commH$ containing, respectively, $X_{n+i}$ and $X_{n+j}$ in the $\row_q$th row.

Let $T(\underline{X})$ be the submatrix of~$\commH$ composed of rows $r_1, \dots, r_{\omega}$, $\row_1$, $n+\row_1, \dots, \row_l$, $n+\row_l$ and of columns $c_1, \dots, c_{\omega}$, $\coli_{1}$, $\colj_{1}$, $\dots$, $\coli_{l}$, $\colj_{l}$, in this order. Using analogous arguments to the ones of the former cases, one shows that $T(\underline{X})$ is of the form~\eqref{case1} with $R(\underline{X})$ as in~\eqref{case2} for $\omega \geq 2$, and as in~\eqref{case3} for $\omega=1$.

The minors of the form $X_{f}^{m}M_{ij}(\underline{X})^l$, for $f,i,j \in [n]$, (up to sign) are obtained by repeating the constructions above for each case 
but considering rows $n+r_t$ instead of $r_t$, for all $t \in [\omega]$. The determinants of the matrices obtained in this way are of the desired form because of the symmetry~\eqref{symmetryH} of the columns of $\commH$. 
\end{proof}

We are now ready to describe the integrands of~\eqref{ccpadic} in the context of groups of type~$H$. 
For $\mathbf{x} \in \W_n$ and $l \in [n-1]$, we obtain from~\eqref{intH} that 
\begin{align}
&\frac{\|F_{n+l}(A_{\mathcal{H}_n}(\mathbf{x}))\cup w F_{n+l-1}(A_{\mathcal{H}_n}(\mathbf{x}))\|_{\p}}{\|F_{n+l-1}(A_{\mathcal{H}_n}(\mathbf{x}))\|_{\p}}\nonumber \\
&=\frac{\|\{M_{ij}(\mathbf{x})^l\mid 1 \leq i < j \leq n\}\cup w\{M_{ij}(\mathbf{x})^{l-1}\mid 1 \leq i < j \leq n\}\|_{\p}}{\|\{M_{ij}(\mathbf{x})^{l-1}\mid 1 \leq i < j \leq n\}\|_{\p}}\nonumber \\
&\label{integrandH}=\|\{M_{ij}(\mathbf{x})\mid 1 \leq i < j \leq n\}\cup \{w\}\|_{\p}.
\end{align}

Combining~\eqref{integrandH} with Lemma~\ref{smallnh} yields 
\[\prod_{k=1}^{2n-1}\frac{\|F_{k}(A_{\mathcal{H}_n}(\mathbf{x}))\cup wF_{k-1}(A_{\mathcal{H}_n}(\mathbf{x}))\|_{\p}}
{\|F_{k-1}(A_{\mathcal{H}_n}(\mathbf{x}))\|_{\p}}=\|\{M_{ij}(\mathbf{x}) \mid 1 \leq i < j \leq n \} \cup \{w\}\|_{\p}^{n-1}.\]
Thus, for groups of the form $H_{n}(\lri)$, the $\p$-adic integral~\eqref{ccpadic} is
\begin{align*}&\mathcal{J}_{H_n}(s_1,s_2):=\\
&\int_{(w,\underline{x})\in\p\times\W_{2n}}\hspace{-0.2cm}|w|_{\p}^{(2n-1)s_1+s_2-\binom{n+1}{2}-2}
\|\{M_{ij}(\underline{x}) \mid 1 \leq i < j \leq n \} \cup \{w\}\|_{\p}^{-(n-1)(1+s_1)}d\mu,
\end{align*}
which is a specialisation of the integral given in Proposition~\ref{integral3}. 
Combining Proposition~\ref{integral3} with Proposition~\ref{intpadic} yields
\begin{align*}\clvlzf_{H_n(\lri)}(s_1,s_2)&=\frac{1}{1-q^{\binom{n+1}{2}-s_2}}\left(1+(1-q^{-1})^{-1}\mathcal{J}_{H_n}(s_1,s_2)\right) \\
 &=ZF_{H_n}(q,q^{-s_1},q^{-s_2}),
\end{align*}
where $ZF_{H_n}(X,T_1,T_2) \in \Q[X,T_1,T_2]$ is given by
\[\frac{(1-X^{\binom{n}{2}}T_{1}^{n}T_2)(1-X^{\binom{n}{2}+2}T_{1}^{2n-1}T_2)+X^{\binom{n+1}{ 2}}T_{1}^{n}T_2(1-X^{-n+1})(1-X^{-(n-1)}T_{1}^{n-1})}
{(1-X^{\binom{n+1}{2}}T_2)(1-X^{\binom{n+1}{2}+1}T_{1}^{n}T_2)(1-X^{\binom{n+1}{2}+1}T_{1}^{2n-1}T_2)}.\]

This proves Theorem~\ref{cclvl} for groups of type $H$.

\section{Bivariate representation zeta functions---proof of Theorem~\ref{rlvl}}\label{prlvl}
Recall that $\g:=\Lambda(\lri)$. Consider the $B$-commutator matrix $B_{\Lambda}(\underline{Y})$ of $\g$ with respect to $\mathbf{e}$ and $\mathbf{f}$ defined in Section~\ref{ppspadic}.
%, where $\overline{\mathbf{e}}$ and $\mathbf{f}$ are the bases of $\g/\z$ and $\g'$, respectively, given in the presentations of Definition~\ref{fgh}. 

% In~\cite[Proposition~4.7]{PL18}, we have shown how to write the bivariate zeta functions of Definition~\ref{biva} in terms of Poincaré sums encoding the elementary divisor type of the 
% commutator matrices.
Recall that a matrix $M\in \Mat_{n\times n}(\lri/\p^N)$ is said to have \emph{elementary divisor type} $(m_1, \dots, m_{u})$, denoted $\nu(M)=(m_1, \dots, m_{u})$, if it is equivalent to the matrix 
$\textup{Diag}(\pi^{m_1}, \dots, \pi^{m_u}, \mathbf{0}_{n-u})$, where $\pi$ denotes a uniformiser of $\lri$, $u$ is the rank of~$M$, $\mathbf{0}_{n-u}=(0,\dots, 0)\in \Z^{n-u}$, and $0 \leq m_1 \leq m_2 \leq \dots \leq m_{u}\leq N$. 

For $k\in\N$, $N\in \N_0$, set \[W_{k}(\lrip)=\left\{\mathbf{x}\in(\lrip)^k\mid v_{\p}(\mathbf{x})=0\right\}.\] 
Recall that $\ub=n$ for all $\Lambda \in \{\Fnd,\Gn,\Hn\}$; see Section~\ref{ppspadic}.
Write $\m=(m_1, \dots, m_{n})$. In~\cite[Proposition~4.7]{PL18}, the following is shown:
% \begin{align}\label{rint} &\rlvlzf_{\G(\lri)}(s_1,s_2)=\\ \frac{1}{1-q^{r-s_2}}
% &\left(1+\sum_{N=1}^{\infty}\sum_{(m_1,\dots,m_{\ub})\in\N_{0}^{\ub}}\hspace{-0.5cm}\No q^{-N(\ub s_1+s_2+2\ub-r)-2\sum_{j=1}^{\ub}m_j\frac{(-s_1-2)}{2}}\right),\nonumber
% \end{align}
\begin{align}\label{rint} &(1-q^{r-s_2})\rlvlzf_{\G(\lri)}(s_1,s_2)=\\
&\left(1+\sum_{N=1}^{\infty}\sum_{\m\in\N_{0}^{n}}\No q^{-N(n s_1+s_2+2n-r)-2\sum_{j=1}^{n}m_j\frac{(-s_1-2)}{2}}\right),\nonumber
\end{align}
where $\No =|\{\mathbf{y} \in W_{n}(\lrip) \mid \vry= \mathbf{m}\}|$ and $r=\rk(\g/\g')$. 

Given a set $I=\{i_1, \dots, i_l\}_{<} \subseteq [n-1]_0$, recall that $\mu_j:=i_{j+1}-i_j$ for all $j \in [l]_0$, where $i_0=0$, $i_{l+1}=n$. Choose $r_{I}=(r_i)_{i \in I}\in \N^{I}$ and let $N=\sum_{i \in I}r_i$. 
Recall that $b=\rk(\g')$. 
Following~\cite[Section~3]{StVo14}, we define the following sets, which form a partition of $W_{b}(\lrip)$:
\begin{align*}\textup{N}_{I,r_{I}}(\G)=&\{\mathbf{y} \in W_{b}(\lrip):\nu(B_{\Lambda}(\mathbf{y}))\\
&=(\underbrace{0, \dots, 0}_{\mu_l \text{ terms}},\underbrace{r_{i_l}, \dots, r_{i_l}}_{\mu_{l-1} \text{ terms}},\underbrace{ r_{i_l}+r_{i_{l-1}}, \dots, r_{i_l}+r_{i_{l-1}}}_{\mu_{l-2} \text{ terms} }, \dots, \underbrace{N, \dots, N}_{\mu_0 \text{ terms}})\}.
\end{align*}
For $\Lambda \in \{\Fnd,\Gn,\Hn\}$, we have that $\z=\g'$, thus
\[r:=\rk(\g/\g')=a:=\rk(\g/\z)=\begin{cases}2n+\delta,&\text{ if }\G=F_{n,\delta},\\ 2n,&\text{ if }\G\in \{G_n,H_n\}.\end{cases}\] 
For simplicity, consider $\delta=0$ when $\G\in \{G_n,H_n\}$, so that we can write $a=2n+\delta$ uniformly. 
%Moreover, we observe that $\bar{a}(\G,n)=a=r$ for each $\G \in \{F_{n,\delta}, G_n, H_n\}$.

Using these facts, we rewrite~\eqref{rint} as follows.
\begin{align}
 &\rlvlzf_{\G(\lri)}(s_1,s_2)\nonumber\\
 &=\frac{1}{1-q^{\bar{a}(\G,n)-s_2}}\sum_{I \subseteq [n-1]_0}\sum_{r_{I}\in \N^{I}}|\textup{N}_{I,r_I}(\G)|q^{-(ns_1+s_2+2n-r)\sum_{i\in I}r_i-\sum_{i\in I}ir_i(-2-s_1)}\nonumber\\
 \label{lem1}&=\frac{1}{1-q^{\bar{a}(\G,n)-s_2}}\sum_{I \subseteq [n-1]_0}\sum_{r_{I}\in \N^{I}}|\textup{N}_{I,r_I}(\G)|q^{\sum_{i\in I}r_i(-(n-i)s_1-s_2+2i+\delta)},
 \end{align}
where $\bar{a}(\G,n)=2n+\delta$, as in Theorem~\ref{rlvl}.

The cardinalities $|\textup{N}_{I,r_I}(\G)|$ are described in~\cite[Proposition~3.4]{StVo14} in terms of the polynomials $f_{\G,I}$ and the numbers $\bar{a}(\G,i)$ 
defined in Theorem~\ref{rlvl} as follows. 
\begin{align}\label{rec} |\textup{N}_{I,r_I}(\G)|&=f_{\G,I}(q^{-1})q^{\sum_{i \in I}r_i({a}(\G,i)-2i-\delta)}.
\end{align}
Combining~\eqref{rec} with~\eqref{lem1} yields
\begin{align*}\rlvlzf_{\G(\lri)}(s_1,s_2)&=\frac{1}{1-q^{\bar{a}(\G,n)-s_2}}\sum_{I \subseteq [n-1]_0}\sum_{r_{I}\in \N^{I}}f_{\G,I}(q^{-1})q^{\sum_{i \in I}r_i(\bar{a}(\G,i)-(n-i)s_1-s_2)}\\
 &=\frac{1}{1-q^{\bar{a}(\G,n)-s_2}}\sum_{ I \subseteq [n-1]_0}f_{\G,I}(q^{-1})\prod_{i\in I}\frac{q^{\bar{a}(\G,i)-(n-i)s_1-s_2}}{1-q^{\bar{a}(\G,i)-(n-i)s_1-s_2}}.\\
\end{align*}
This concludes the proof of Theorem~\ref{rlvl}.
\section{Hyperoctahedral groups and functional equations}
In this section, we relate the formulae of Theorem~\ref{rlvl} to statistics on Weyl groups of type $B$, also called hyperoctahedral groups $B_n$.
Specialisation~\eqref{specialisation} then provides formulae for the class number zeta functions of groups of type $F$, $G$, and $H$ in terms of 
such statistics. By comparing these formulae to the ones of Corollary~\ref{kzf}, we obtain formulae for joint distributions of three functions on such Weyl groups.

We also use the descriptions of the bivariate representation zeta functions in terms of Weyl group statistics in order to prove Theorem~\ref{functeq} in Section~\ref{pfuncteq}.

Some required notation regarding hyperoctahedral groups is given in Section~\ref{Weyl}.
\subsection{Hyperoctahedral groups \texorpdfstring{$B_n$}{Bn}}\label{Weyl}
We briefly recall the definition of the hyperoctahedral groups $B_n$ and some statistics associated to them. For further details about Coxeter groups and hyperoctahedral 
groups we refer the reader to~\cite{BjBr05}.

Weyl groups of type $B$ are the groups $B_n$, for $n\in\N$, of all bijections $w: [\pm n] \to  [\pm n]$ with $w(-i)=-w(i)$, for all $i \in [\pm n]$, with operation given 
by composition. 
Given an element $w \in B_n$ write $w=[a_1, \dots, a_n]$ to denote $w(i)=a_i$. 

\begin{dfn}  For $w \in B_n$, the \emph{inversion number}, the \emph{number of negative entries} and the \emph{number of negative sum pairs} of $w$ are defined, respectively, by
\begin{align*} \invb(w)&=|\{(i,j)\in [n]^2 \mid i<j, w(i)>w(j)\}|,\\
\negb(w)&=|\{i\in [n] \mid w(i)<0\}|, \\
\nspb(w)&=|\{(i,j)\in [n]^2: i\neq j, w(i)+w(j)<0\}|.\\
\end{align*}
\end{dfn}\vspace{-0.5cm}
Let $s_i=[1, \dots, i-1, i+1, i, \dots, n]$ for $i\in [n-1]$ and $s_0=[-1, 2, \dots, n]$ be elements of $B_n$. Let $S_{B}=\{s_0, s_1, \dots, s_{n-1}\}$. Then $(B_n,S_B)$ is a Coxeter system.

In~\cite[Proposition~8.1.1]{BjBr05} it is shown that the \emph{Coxeter length} on $B_n$ with respect to the generating set $S_{B}$ is  
\[\ell(w)=\invb(w)+\negb(w)+\nspb(w),\text{ for }w \in B_n.\]

The \emph{right descent} of $w \in B_n$ is the set %\[D(w)=\{s_i \in S_{B} \mid w(i)>w(i+1)\}.\] 
\[D(w)=\{i \in [n-1]_0 \mid w(i)>w(i+1)\}.\] 
%For simplicity, we identify $S_B$ with $[n-1]_0$ in the obvious way, so that $D(w)\subseteq [n-1]_0$.
Moreover, for $I \subseteq [n-1]_0$, define 
\[B_{n}^{I}=\{w \in B_n \mid D(w)\subseteq I^{\komplement}=[n-1]_0\setminus I\}.\]\vspace{-0.6cm} 
\begin{exm} The longest element of $B_n$ is $w_o=[-1,\dots,-n]$. We have that 
 \[\invb(w_0)=\binom{n}{2}, \hspace{0.5cm} \negb(w_0)=n, \hspace{0.5cm} \ell(w_0)=n^2, \hspace{0.5cm} D(w_0)=[n-1]_0.\qedhere\]
\end{exm}
Consider $w\in B_n$. The following statistics are used in the current paper. 
\begin{align*} 
L(w)&= \frac{1}{2}|\{(i,j) \in [\pm n]_{0}^{2} \mid i<j , w(i)>w(j), i \not\equiv 0 \bmod 2\}|,\\
\des(w)&=|D(w)|,\\
\sigma(w)&=\sum_{i\in D(w)}n^2-i^2,\\
\maj(w)&=\sum_{i\in D(w)}i,\\
\rmaj(w)&=\sum_{i\in D(w)}n-i. 
\end{align*}
The statistics $\des(w)$, $\maj(w)$, and $\rmaj(w)$ are called, respectively, the \emph{descent number}, the \emph{major index}, and the \emph{reverse major index} of $w$.
\subsection{Bivariate representation zeta functions and statistics of Weyl groups}\label{stat}
The following lemma describes the polynomials $f_{\G,I}$ defined in Theorem~\ref{rlvl} in terms of statistics on the groups $B_n$, where $\G\in\{F_{n,\delta},G_n,H_n\}$.
\begin{lem}\label{fGi} Let $n \in \N, \delta \in \{0,1\}$ and $I\subseteq [n-1]_0$. Then
\begin{enumerate} \item~\cite[Proposition~4.6]{StVo14} 
\begin{align*}f_{F_{n,\delta},I}(X)&= \sum_{w \in B_{n}^{I^{\komplement}}}(-1)^{\negb(w)}X^{(2\ell(w)+(2\delta-1)\negb)(w)},\\
f_{G_n,I}(X)&= \sum_{w \in B_{n}^{I^{\komplement}}}(-1)^{\negb(w)}X^{\ell(w)},
\end{align*}
\item~\cite[Theorem~5.4]{Ca10} \begin{align*}f_{H_n,I}(X)= &\sum_{w \in B_{n}^{I^{\komplement}}}(-1)^{\ell(w)}X^{L(w)}.\phantom{X^{(2(2\delta-1))\negb(w)}}\end{align*}
\end{enumerate}
\end{lem}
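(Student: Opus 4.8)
The plan is to recognise Lemma~\ref{fGi} as a transcription of results already available in the literature, the substantive part being to match up normalisations. Recall that the polynomials $f_{\G,I}$ enter Theorem~\ref{rlvl} only through formula~\eqref{rec}, i.e.\ through the description of the cardinalities $|\textup{N}_{I,r_I}(\G)|$: for $\G\in\{F_{n,\delta},G_n\}$ this description, with the closed forms recorded in Table~\ref{Tablerlvl}, is \cite[Proposition~3.4]{StVo14}, and for $\G=H_n$ it is due to \cite{Ca10}. On the other hand, \cite[Proposition~4.6]{StVo14} expresses the same cardinalities $|\textup{N}_{I,r_I}(\G)|$, for $\G\in\{F_{n,\delta},G_n\}$, as generating functions over the descent classes $B_n^{I^{\komplement}}$, with weight $(-1)^{\negb(w)}$ and exponent $2\ell(w)+(2\delta-1)\negb(w)$ in type $F$ and $\ell(w)$ in type $G$; and \cite[Theorem~5.4]{Ca10} gives the corresponding statement in type $H$, now with the linear character $(-1)^{\ell(w)}$ and the statistic $L(w)$. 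So the first step is to write the two descriptions of $|\textup{N}_{I,r_I}(\G)|$ side by side, cancel the common powers of $q$, and read off the identities of the lemma under the substitution $X=q^{-1}$. The only point needing genuine verification is that the powers of $q$ absorbed into the $\bar a(\G,i)$ here---versus the $a(\G,i)$ of \cite[Theorem~C]{StVo14}, from which they differ by $2i+\delta$, resp.\ $2i$---are exactly compensated by the $q$-prefactor appearing in \cite[Proposition~4.6]{StVo14}.

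It is worth recording the mechanism behind these identities, which also serves as an independent check. The set $B_n^{I^{\komplement}}=\{w\in B_n:D(w)\subseteq I\}$ is a minimal-length transversal for the cosets of the standard parabolic subgroup $W_{I^{\komplement}}=\langle s_i:i\in I^{\komplement}\rangle$; deleting the nodes $s_{i_1},\dots,s_{i_l}$ from the Coxeter graph of $B_n$ identifies
\[W_{I^{\komplement}}\;\cong\;B_{\mu_0}\times S_{\mu_1}\times\cdots\times S_{\mu_l},\]
the first factor being of type $B$ precisely when $\mu_0=i_1\geq 1$ (and trivial when $0\in I$). Using the length-additive factorisation $x=wv$ with $w\in B_n^{I^{\komplement}}$ and $v\in W_{I^{\komplement}}$, the fact that $(-1)^{\negb}$ and $(-1)^{\ell}$ are linear characters of $B_n$, and the additivity of the relevant statistics along this factorisation (for example $\negb(x)=\negb(w)+\negb(v)$, since a minimal representative takes only positive values on the first $\mu_0$ coordinates), one obtains
\[\sum_{w\in B_n^{I^{\komplement}}}\chi(w)\,X^{\mathrm{stat}(w)}\;=\;\frac{\sum_{x\in B_n}\chi(x)\,X^{\mathrm{stat}(x)}}{\sum_{v\in W_{I^{\komplement}}}\chi(v)\,X^{\mathrm{stat}(v)}}.\]
In type $G$ one evaluates the right-hand side using $\sum_{v\in S_m}X^{\ell(v)}=\prod_{k=1}^{m}\frac{1-X^k}{1-X}$ and $\sum_{v\in B_m}(-1)^{\negb(v)}X^{\ell(v)}=\big(\prod_{k=1}^{m}(1-X^k)\big)^{2}/(1-X)^{m}$, and checks that the resulting rational function telescopes into $\binom{n}{I}_X(X^{i_1+1};X)_{n-i_1}$; types $F$ and $H$ run analogously, with the substitution $X\mapsto X^2$ and, in type $H$, the character $(-1)^{\ell}$ and the statistic $L$ replacing $(-1)^{\negb}$ and $\ell$.

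The main obstacle is the type-$H$ case: there $L$ is not the Coxeter length and the sign is $(-1)^{\ell}$ rather than $(-1)^{\negb}$, so neither the additivity of $L$ along the coset factorisation nor the value of the parabolic denominator $\sum_{v\in W_{I^{\komplement}}}(-1)^{\ell(v)}X^{L(v)}$ is immediate; its identification with a product featuring $\prod_{j=1}^{l}(X^2;X^2)_{\lfloor\mu_j/2\rfloor}$---the floor reflecting that each symmetric-group factor here contributes a ``type-$B$-like'' generating function in the variable $X^2$---is precisely the substance of \cite[Theorem~5.4]{Ca10}, so the cleanest write-up simply quotes it. A secondary, purely computational point of care is the type-$F$ exponent $2\ell(w)+(2\delta-1)\negb(w)$: the sign $(-1)^{\negb(w)}$ must be tracked through the telescoping so that it combines correctly with $X^{(2\delta-1)\negb(w)}$ and reproduces the $X^2$-binomial $\binom{n}{I}_{X^2}$ and the shifted Pochhammer symbol $(X^{2(i_1+\delta)+1};X^2)_{n-i_1}$ with the right exponents.
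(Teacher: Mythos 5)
Your proposal is correct and matches the paper's treatment: the paper offers no proof of Lemma~\ref{fGi} beyond the inline citations to \cite[Proposition~4.6]{StVo14} and \cite[Theorem~5.4]{Ca10}, which is exactly the primary move you make. Your additional sketch of the coset-decomposition/telescoping mechanism (and the correct flagging of the type-$H$ case, where $L$ and $(-1)^{\ell}$ prevent a routine parabolic factorisation, as the genuinely nontrivial input from \cite{Ca10}) is a sound independent check but goes beyond what the paper records.
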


\begin{lem}\label{weylfgh} Given $n \in \N$, $\delta \in \{0,1\}$, and a prime ideal $\p$ of $\ri$,
\begin{equation}\label{weylfgheq}\rlvlzf_{\G(\lri)}(s_1,s_2)=
\frac{\sum_{w \in B_n} \chi_{\G}(w)q^{-h_{\G}(w)} \prod_{i \in D(w)}q^{\bar{a}(\G,i)-(n-i)s_1-s_2}}{\prod_{i =0}^{n}(1-q^{\bar{a}(\G,i)-(n-i)s_1-s_2})},\end{equation}
where, for each $w\in B_n$,
\begin{table*}[h]\caption {}
	\centering
		\begin{tabular}{ c | c | c }
			$\G$ & $\chi_{\G}(w)$ & $h_{\G}(w)$  \\[0.5ex]
				\hline
				$F_{n,\delta}$ & $(-1)^{\negb(w)}$ & $2\ell(w)+(2\delta-1)\negb(w)$ \\
				
				$G_n$ & $(-1)^{\negb(w)}$ & $\ell(w)$ \\
				
				$H_n$ & $(-1)^{\ell(w)}$ & $L(w)$ \\
		\end{tabular}
\end{table*}
\end{lem}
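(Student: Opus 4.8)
The plan is to combine the closed formula of Theorem~\ref{rlvl} with the descent-class description of the polynomials $f_{\G,I}$ supplied by Lemma~\ref{fGi}, and then to perform a routine rearrangement of the resulting double sum. To fix notation, I would abbreviate $t_i := q^{\bar{a}(\G,i)-(n-i)s_1-s_2}$ for $i \in [n]_0$, noting that $t_n = q^{\bar{a}(\G,n)-s_2}$ since the coefficient of $s_1$ vanishes at $i=n$. With this shorthand Theorem~\ref{rlvl} reads
\[\rlvlzf_{\G(\lri)}(s_1,s_2)=\frac{1}{1-t_n} \sum_{I \subseteq [n-1]_0}f_{\G,I}(q^{-1})\prod_{i \in I}\frac{t_i}{1-t_i}.\]
By Lemma~\ref{fGi}, in each of the three families one has uniformly $f_{\G,I}(q^{-1}) = \sum_{w \in B_n^{I^{\komplement}}} \chi_{\G}(w)\, q^{-h_{\G}(w)}$, with $\chi_{\G}$ and $h_{\G}$ as in the table of the statement; and by the definition of $B_n^{J}$ in Section~\ref{Weyl} we have $B_n^{I^{\komplement}} = \{w \in B_n \mid D(w) \subseteq I\}$.

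Next I would substitute this into the displayed formula and interchange the order of summation, summing over $w \in B_n$ first and then over the subsets $I$ with $D(w) \subseteq I \subseteq [n-1]_0$:
\[\rlvlzf_{\G(\lri)}(s_1,s_2)=\frac{1}{1-t_n}\sum_{w \in B_n}\chi_{\G}(w)\,q^{-h_{\G}(w)}\sum_{D(w) \subseteq I \subseteq [n-1]_0}\prod_{i \in I}\frac{t_i}{1-t_i}.\]
For the inner sum, writing $I = D(w) \sqcup J$ with $J$ ranging over all subsets of $[n-1]_0 \setminus D(w)$ and using the identity $1 + \frac{t_j}{1-t_j} = \frac{1}{1-t_j}$ gives
\[\sum_{D(w) \subseteq I \subseteq [n-1]_0}\prod_{i \in I}\frac{t_i}{1-t_i}=\Bigl(\prod_{i \in D(w)}\frac{t_i}{1-t_i}\Bigr)\prod_{j \in [n-1]_0 \setminus D(w)}\Bigl(1+\frac{t_j}{1-t_j}\Bigr)=\frac{\prod_{i \in D(w)}t_i}{\prod_{i \in [n-1]_0}(1-t_i)}.\]
Substituting back and absorbing the factor $(1-t_n)^{-1}$ into the denominator (so that $\prod_{i\in[n-1]_0}(1-t_i)$ becomes $\prod_{i=0}^{n}(1-t_i)$) yields exactly the asserted identity
\[\rlvlzf_{\G(\lri)}(s_1,s_2)=\frac{\sum_{w \in B_n}\chi_{\G}(w)\,q^{-h_{\G}(w)}\prod_{i \in D(w)}t_i}{\prod_{i=0}^{n}(1-t_i)},\]
after recalling $t_i = q^{\bar{a}(\G,i)-(n-i)s_1-s_2}$.

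I do not expect any serious obstacle here: once Theorem~\ref{rlvl} and Lemma~\ref{fGi} are in hand the argument is purely formal. The only points that need a little care are the combinatorial bookkeeping identity $B_n^{I^{\komplement}} = \{w \mid D(w) \subseteq I\}$ and the geometric-type collapse of the sum over all $I$ containing $D(w)$, both of which are elementary. I would also emphasise that the three families $F_{n,\delta}$, $G_n$, $H_n$ are handled completely uniformly, the only case-dependent input being the explicit forms of $\chi_{\G}$ and $h_{\G}$ provided by Lemma~\ref{fGi}.
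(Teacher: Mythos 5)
Your proposal is correct and follows essentially the same route as the paper: substitute the descent-class expression for $f_{\G,I}(q^{-1})$ from Lemma~\ref{fGi} into Theorem~\ref{rlvl}, then interchange the sums over $I$ and $w$ and collapse the sum over $I\supseteq D(w)$. The only difference is that you carry out this last rearrangement explicitly, whereas the paper delegates it to the cited combinatorial identity \cite[Lemma~4.4]{StVo14}; your computation is a correct proof of exactly that identity.
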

\begin{proof}  
Applying Lemma~\ref{fGi} to the formulae of Theorem~\ref{rlvl} yields the following expression for $\rlvlzf_{\G(\lri)}(s_1,s_2)$:
\[\frac{1}{1-q^{\bar{a}(\G,n)-s_2}}\sum_{I\subseteq[n-1]_0}  \sum_{w \in B_{n}^{I^{\komplement}}}\chi_{\G}(w)q^{-h_{\G}(w)}\prod_{i \in I}
\frac{q^{\bar{a}(\G,i)-(n-i)s_1-s_2}}{1-q^{\bar{a}(\G,i)-(n-i)s_1-s_2}},\]
which can be rewritten as the claimed sum because of~\cite[Lemma~4.4]{StVo14}.\qedhere
\end{proof}

\begin{pps}\label{statF} For $n \in \N$ and $\delta \in \{0,1\}$, the following holds in $\Q[X,Z]$.
 \begin{align*}	&\sum_{w \in B_n} (-1)^{\negb(w)}X^{-(2(\ell-\sigma)+(2\delta-1)\negb-(2\delta-3)\rmaj-(2n+\delta)\des)(w)}Z^{\des(w)}&\\
		&=\left(1-X^{\binom{2n+\delta-1}{2}}Z\right)\prod_{i =2}^{n}\left(1-X^{\binom{2n+\delta}{2}-\binom{2i+\delta}{2}+2i+\delta}Z\right)&
 \end{align*}
\end{pps}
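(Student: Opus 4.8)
The plan is to derive Proposition~\ref{statF} by comparing two expressions for the class number zeta function of $F_{n,\delta}(\lri)$: the one coming from Corollary~\ref{kzf} (equivalently the local factor of~\eqref{kzetaF}) and the one obtained by applying specialisation~\eqref{specialisation} to the hyperoctahedral description of the bivariate representation zeta function in Lemma~\ref{weylfgh}. First I would specialise~\eqref{weylfgheq} for $\G = F_{n,\delta}$ by setting $s_1 = 0$, i.e.\ extract $\kzeta_{F_{n,\delta}(\lri)}(s_2)$. Using $\bar a(F_{n,\delta},i) = \binom{2n+\delta}{2} - \binom{2i+\delta}{2} + 2i + \delta$ and $\chi_{F_{n,\delta}}(w) = (-1)^{\negb(w)}$, $h_{F_{n,\delta}}(w) = 2\ell(w) + (2\delta-1)\negb(w)$, this gives
\[
\kzeta_{F_{n,\delta}(\lri)}(s) = \frac{\sum_{w\in B_n}(-1)^{\negb(w)} q^{-(2\ell + (2\delta-1)\negb)(w)}\prod_{i\in D(w)} q^{\bar a(F_{n,\delta},i) - s}}{\prod_{i=0}^{n}\bigl(1 - q^{\bar a(F_{n,\delta},i) - s}\bigr)}.
\]
I would then rewrite the numerator product $\prod_{i\in D(w)} q^{\bar a(F_{n,\delta},i)-s}$ as $q^{-\des(w)s} \cdot q^{\sum_{i\in D(w)}\bar a(F_{n,\delta},i)}$ and express $\sum_{i\in D(w)}\bar a(F_{n,\delta},i)$ in terms of the standard statistics: since $\bar a(F_{n,\delta},i)$ is affine-linear in $i$ plus the term $-\binom{2i+\delta}{2}$, the sum over $D(w)$ becomes a combination of $\des(w)$, $\maj(w)$, $\rmaj(w)$, and $\sigma(w)$ (the latter because $\binom{2i+\delta}{2}$ expands to a quadratic in $i$ involving $i^2$, and $\sum_{i\in D(w)}(n^2 - i^2) = \sigma(w)$). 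Bookkeeping these coefficients is what produces the exponent $2(\ell - \sigma) + (2\delta-1)\negb - (2\delta-3)\rmaj - (2n+\delta)\des$ appearing in the statement.

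**Completing the argument.** On the other side, Corollary~\ref{kzf} (formula~\eqref{kzetaF}) gives the local factor of $\kzeta_{F_{n,\delta}(\ri)}$ explicitly as a ratio of Euler factors of Dedekind zeta functions; writing $q^{-s} = Z^{-1}$ suitably, the local factor is
\[
\frac{(1 - q^{\binom{2n+\delta-1}{2}-s})}{(1-q^{\binom{2n+\delta}{2}+1-s})(1-q^{\binom{2n+\delta}{2}-s})},
\]
and one checks that, after translating $\bar a(F_{n,\delta},0) = 0$ and $\bar a(F_{n,\delta},1) = \binom{2n+\delta}{2} - \binom{2+\delta}{2} + 2 + \delta = \binom{2n+\delta}{2}+1$ for $\delta\in\{0,1\}$, the two "small" factors $i=0,1$ in $\prod_{i=0}^n(1-q^{\bar a(F_{n,\delta},i)-s})$ exactly cancel the denominator of~\eqref{kzetaF}, leaving in the numerator precisely $(1 - q^{\binom{2n+\delta-1}{2}-s})\prod_{i=2}^n(1-q^{\bar a(F_{n,\delta},i)-s})$. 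Equating the two numerators (the denominators $\prod_{i=0}^n(1-q^{\bar a(F_{n,\delta},i)-s})$ being identical once the above cancellation is undone) and substituting $X = q^{-1}$, $Z = q^{-s}$ yields exactly the claimed polynomial identity in $\Q[X,Z]$. Since this holds for infinitely many prime powers $q$, it is a genuine identity of polynomials.

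**Main obstacle.** The conceptual content is light — it is a direct comparison of two known formulae — so the real work is the exponent bookkeeping: verifying that $\sum_{i\in D(w)}\bigl(\binom{2n+\delta}{2} - \binom{2i+\delta}{2} + 2i + \delta\bigr)$, together with the contribution $-h_{F_{n,\delta}}(w) = -(2\ell(w)+(2\delta-1)\negb(w))$ from the numerator of Lemma~\ref{weylfgh} and the $-\des(w)s$ term, collapses to the single exponent $-\bigl(2(\ell-\sigma) + (2\delta-1)\negb - (2\delta-3)\rmaj - (2n+\delta)\des\bigr)(w)$ on $X$. Concretely one uses $\binom{2i+\delta}{2} = 2i^2 + (2\delta-1)i + \binom{\delta}{2}$, so $\sum_{i\in D(w)}\binom{2i+\delta}{2} = 2\sum_{i\in D(w)}i^2 + (2\delta-1)\maj(w) + \binom{\delta}{2}\des(w)$, and then $\sum_{i\in D(w)}i^2 = n^2\des(w) - \sigma(w)$ while $\sum_{i\in D(w)}\binom{2n+\delta}{2} = \binom{2n+\delta}{2}\des(w)$ and $\sum_{i\in D(w)}(2i+\delta) = 2\maj(w) + \delta\des(w)$; assembling these and the $\maj = \maj(w)$, $\rmaj(w) = n\des(w) - \maj(w)$ relation to eliminate $\maj$ in favour of $\rmaj$ is the only place an arithmetic slip could occur. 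I would carry this out once carefully and check it against the $\delta = 0$, small-$n$ cases (e.g.\ $n=1,2$) before declaring the identity proved.
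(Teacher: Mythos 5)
Your proposal follows essentially the same route as the paper's proof: specialise~\eqref{weylfgheq} at $s_1=0$, rewrite $\prod_{i\in D(w)}q^{\bar{a}(F_{n,\delta},i)-s}$ in terms of $\sigma$, $\rmaj$ and $\des$ (the paper does this in one step via the identity $\bar{a}(F_{n,\delta},i)=2(n^2-i^2)+(2\delta-3)(n-i)+2n+\delta$ rather than passing through $\maj$ and then converting to $\rmaj$), compare with Corollary~\ref{kzf}, and conclude from validity at infinitely many prime powers $q$. Two small slips to fix when you do the bookkeeping: the substitution should be $X=q$, $Z=q^{-s}$ (not $X=q^{-1}$), and $\bar{a}(F_{n,\delta},0)=\binom{2n+\delta}{2}-\binom{\delta}{2}+\delta$ rather than $0$ (likewise $\bar{a}(F_{n,\delta},1)=\binom{2n+\delta}{2}$ when $\delta=1$), although your structural conclusion that the $i=0,1$ factors of the denominator exactly cancel the denominator of~\eqref{kzetaF} is correct since $\{\bar{a}(F_{n,\delta},0),\bar{a}(F_{n,\delta},1)\}=\{\binom{2n+\delta}{2},\binom{2n+\delta}{2}+1\}$ for both values of $\delta$.
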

\begin{proof}
On the one hand, specialisation~\eqref{specialisation} applied to~\eqref{weylfgheq} for groups of type $F$ gives
 \[\kzeta_{F_{n,\delta}(\lri)}(s)=\frac{ \sum_{w \in B_n} (-1)^{\negb(w)}q^{-(2\ell+(2\delta-1)\negb)(w)} 
 \prod_{i \in D(w)}q^{\bar{a}(F_{n,\delta},i)-s}}{\prod_{i=0}^{n}(1-q^{\bar{a}(F_{n,\delta},i)-s})}.\]\\
Now,  
\[\bar{a}(F_{n,\delta},i)=\tbinom{2n+\delta}{2}-\tbinom{2i+\delta}{2}+2i+\delta=2(n^2-i^2)+(2\delta-3)(n-i)+2n+\delta,\]
so that
\[\prod_{i\in D(w)}q^{\bar{a}(F_{n,\delta},i)-s}=q^{(2\sigma+(2\delta-3)\rmaj+(2n+\delta-s)\des)(w)}.\]
Hence
\[\kzeta_{F_{n,\delta}(\lri)}(s)=\frac{ \sum_{w \in B_n} (-1)^{\negb(w)}q^{-(2(\ell-\sigma)+(2\delta-1)\negb-(2\delta-3)\rmaj-(2n+\delta-s)\des)(w)}}
{\prod_{i=0}^{n}(1-q^{\bar{a}(F_{n,\delta},i)-s})}.\]
On the other hand, Corollary~\ref{kzf} asserts that
\[\kzeta_{F_{n,\delta}(\lri)}(s)=\frac{1-q^{\binom{2n+\delta-1}{2}-s}}{(1-q^{\binom{2n+\delta}{2}+1-s})(1-q^{\binom{2n+\delta}{2}-s})}=
\frac{1-q^{\binom{2n+\delta-1}{2}-s}}{\prod_{i=0}^{1}(1-q^{\bar{a}(F_{n,\delta},i)-s})}.\]
Therefore
\begin{align*}&\sum_{w \in B_n} (-1)^{\negb(w)}q^{-(2(\ell-\sigma)+(2\delta-1)\negb-(2\delta-3)\rmaj-(2n+\delta-s)\des)(w)}&\\
  &=\left(1-q^{\binom{2n+\delta-1}{2}-s}\right)\prod_{i =2}^{n}\left(1-q^{\bar{a}(F_{n,\delta},i)}q^{-s}\right)&\\
  &=\left(1-q^{\binom{2n+\delta-1}{2}-s}\right)\prod_{i =2}^{n}\left(1-q^{\binom{2n+\delta}{2}-\binom{2i+\delta}{2}+2i+\delta}q^{-s}\right).&
\end{align*}
The formal identity follows as these formulae hold for all prime powers $q$ and all $s \in \C$ with sufficiently large real part.
\end{proof}
For a geometric interpretation of $\ell-\sigma$, we refer the reader to \cite[Section~2]{StWa98}.

It can be easily checked that, for $n\geq 2$ and $w \in B_n$,
\begin{align}
 \label{statG}\prod_{i \in D(w)}q^{\bar{a}(G_n,i)-s}&=q^{(\sigma+2\maj-s\des)(w)},\\
 \label{statH}\prod_{i \in D(w)}q^{\bar{a}(H_n,i)-s}&=q^{\frac{1}{2}(\sigma-3\rmaj)(w)+(2n-s)\des(w)}.
\end{align}
The following proposition follows from Lemma~\ref{weylfgh}, Corollary~\ref{kzf}, equalities~\eqref{statG} and~\eqref{statH}, and arguments analogous to those given in 
the proof of Proposition~\ref{statF}.
\begin{pps}\label{statGH} For $n\geq 2$, the following identities hold in $\Q[X,Z]$.
\begin{align*}
  &\sum_{w \in B_n} (-1)^{\negb(w)}X^{-(\ell-\sigma-2\maj)(w)}Z^{\des(w)}=\left(\prod_{i=3}^{n}1-X^{n^2-i^2+2i}Z\right)\cdot\\
  &\left((1-X^{2\binom{n}{2}}Z)(1-X^{2\binom{n}{2}+1}Z)+X^{n^2}Z(1-X^{-n})(1-X^{-n+1})\right), \text{ and }&\\
  &\sum_{w \in B_{n}}(-1)^{\ell(w)}X^{-\frac{1}{2}\left(2L-\sigma+3\rmaj+4n\des\right)(w)}Z^{\des(w)}=\left(\prod_{i=3}^{n}1-X^{\binom{n+1}{2}-\binom{i+1}{2}+2i}Z\right)\cdot
  \\&\left((1-X^{\binom{n}{2}}Z)(1-X^{\binom{n}{2}+2}Z)+X^{\binom{n+1}{2}}Z(1-X^{-n+1})^2\right).
 \end{align*}
\end{pps}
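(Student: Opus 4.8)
The plan is to mimic exactly the proof of Proposition~\ref{statF}, carrying out the parallel computation for groups of type $G$ and $H$ separately. For each of the two families I would start from the formula of Lemma~\ref{weylfgh}, apply specialisation~\eqref{specialisation} (i.e.\ set $s_1 \to 0$, $s_2 \to s$), and then compare the resulting expression for $\kzeta_{\G(\lri)}(s)$ with the explicit rational function supplied by Corollary~\ref{kzf}. Clearing the common denominator $\prod_{i=0}^{n}(1-q^{\bar{a}(\G,i)-s})$ against the factorisation of the denominator in Corollary~\ref{kzf} will leave the claimed polynomial identity in the single variable $q^{-s}$, which upgrades to a formal identity in $\Q[X,Z]$ since it holds for all prime powers $q$ and all $s$ with large real part (this last step is exactly the one used at the end of the proof of Proposition~\ref{statF}).

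Concretely, for type $G$: applying~\eqref{specialisation} to~\eqref{weylfgheq} with $\chi_{G_n}=(-1)^{\negb}$ and $h_{G_n}=\ell$ gives
\begin{align*}
\kzeta_{G_n(\lri)}(s)=\frac{\sum_{w\in B_n}(-1)^{\negb(w)}q^{-\ell(w)}\prod_{i\in D(w)}q^{\bar{a}(G_n,i)-s}}{\prod_{i=0}^{n}(1-q^{\bar{a}(G_n,i)-s})}.
\end{align*}
Using $\bar{a}(G_n,i)=n^2-i^2+2i$ together with~\eqref{statG}, the numerator becomes $\sum_{w\in B_n}(-1)^{\negb(w)}q^{-(\ell-\sigma-2\maj)(w)+s\des(w)}$. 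On the other hand Corollary~\ref{kzf} writes $\kzeta_{G_n(\lri)}(s)$ as a rational function whose denominator factors as $(1-q^{n^2-s})^2(1-q^{n^2+1-s})$; noting $\bar{a}(G_n,0)=n^2$, $\bar{a}(G_n,1)=n^2+1$, $\bar{a}(G_n,2)=n^2$, these three factors are precisely the $i=0,1,2$ factors of $\prod_{i=0}^n(1-q^{\bar{a}(G_n,i)-s})$, so cancelling them leaves the numerator equal to the Corollary's numerator times $\prod_{i=3}^{n}(1-q^{n^2-i^2+2i-s})$, which is the asserted identity after setting $X=q$, $Z=q^{-s}$. The type-$H$ case is identical: use $\chi_{H_n}=(-1)^{\ell}$, $h_{H_n}=L$, $\bar{a}(H_n,i)=\binom{n+1}{2}-\binom{i+1}{2}+2i$ and~\eqref{statH} to rewrite the numerator as $\sum_{w\in B_n}(-1)^{\ell(w)}q^{-\frac12(2L-\sigma+3\rmaj+4n\des)(w)+s\des(w)}$, then match the denominator factors $(1-q^{\binom{n+1}{2}-s})(1-q^{\binom{n+1}{2}+1-s})^2$ of Corollary~\ref{kzf} with the $i=0,1,2$ factors of $\prod_{i=0}^{n}(1-q^{\bar{a}(H_n,i)-s})$ (here $\bar{a}(H_n,0)=\binom{n+1}{2}$, $\bar{a}(H_n,1)=\binom{n+1}{2}+1$, $\bar{a}(H_n,2)=\binom{n+1}{2}+1$), cancel, and read off the claimed polynomial identity.

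The only genuinely delicate points are bookkeeping rather than conceptual: one must verify that the three smallest values $\bar{a}(\G,0),\bar{a}(\G,1),\bar{a}(\G,2)$ really do reproduce, as a multiset, the three exponents appearing in the denominator of Corollary~\ref{kzf} — in particular the coincidence $\bar{a}(G_n,0)=\bar{a}(G_n,2)=n^2$ and $\bar{a}(H_n,1)=\bar{a}(H_n,2)=\binom{n+1}{2}+1$, which is exactly what produces the squared factors — and that the combination of statistics in the exponent matches after substituting the closed forms for $\bar{a}(\G,i)$ into $\prod_{i\in D(w)}q^{\bar{a}(\G,i)-s}$ via~\eqref{statG} and~\eqref{statH}. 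I expect the main (minor) obstacle to be checking these exponent identities and the sign conventions carefully enough that the final polynomials come out in the precise form stated; since~\eqref{statG} and~\eqref{statH} are already granted in the excerpt ("It can be easily checked that\ldots"), no new combinatorial input is needed.
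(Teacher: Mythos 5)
Your proposal is exactly the paper's argument: the paper proves Proposition~\ref{statGH} by citing Lemma~\ref{weylfgh}, Corollary~\ref{kzf}, and equalities~\eqref{statG}--\eqref{statH} together with ``arguments analogous to those given in the proof of Proposition~\ref{statF}'', which is precisely the specialise-and-cancel computation you describe, including the key observation that $\bar{a}(G_n,0)=\bar{a}(G_n,2)$ and $\bar{a}(H_n,1)=\bar{a}(H_n,2)$ produce the squared denominator factors of Corollary~\ref{kzf}. The only blemish is a transcription slip: by~\eqref{statG} and~\eqref{statH} the descent product contributes $q^{-s\des(w)}$, not $q^{+s\des(w)}$, to the numerator exponent, which is what yields $Z^{\des(w)}$ under $Z=q^{-s}$.
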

\begin{rmk}
 By setting $X=1$ in the equations of Propositions~\ref{statF} and~\ref{statGH}, we obtain the the equalities
 \[  \sum_{w \in B_n}(-1)^{\negb(w)}Z^{\des(w)}=\sum_{w \in B_n}(-1)^{\ell(w)}Z^{\des(w)}=(1-Z)^n,\]
which were first proven in~\cite[Theorem~3.2]{Re95}.
\end{rmk}
\subsection{Functional equations---proof of Theorem~\ref{functeq}}\label{pfuncteq}
We recall that the formulae of Proposition~\ref{intpadic} of the local factors of the bivariate representation zeta function of groups of type $F$, $G$, and $H$ hold for all nonzero prime ideals $\p$, since we consider the construction of the unipotent group schemes of class $2$ given in \cite[Section~2.4]{StVo14}. 
In particular, Lemma~\ref{weylfgh} holds for all nonzero prime ideals.
We use this fact to show that all local terms of these bivariate zeta functions satisfy functional equations. 
Recall that $w_0$ denotes the longest element of $B_n$, that is,  $w_0=[-1,-2,\dots,-n]$.

Theorem~\ref{functeq} follows from the same arguments of the proof of~\cite[Theorem~2.6]{KlVo09} applied to~\eqref{weylfgheq}. In fact, although $h_{\G}$ is not one of the statistics $\mathbf{b\cdot l_{L}}$ or $\mathbf{b\cdot l_{R}}$ defined in~\cite[Theorem~2.6]{KlVo09}, it satisfies the equations (2.6) of~\cite{KlVo09}, that is, 
\[h_{\G}(ww_0)+h_{\G}(w)=h_{\G}(w_0).\]
In fact, one can easily show that $g\in \{\invb, \negb, \ell\}$ satisfies $g(ww_0)=g(w_0)-g(w)$, for all $w \in B_n$, and 
the equation $L(ww_0)=L(w_0w)=L(w_0)-L(w)$ is~\cite[Corollary~7]{StVo13}.
Therefore the conclusion of~\cite[Theorem~2.6]{KlVo09} also holds for~\ref{weylfgheq}.

 \section*{Acknowledgments}
 This paper is part of my PhD thesis. I am grateful to my advisor Christopher Voll for his guidance, encouragement, constructive criticism and helpful discussions.
 I would like to express my gratitude to Tobias Rossmann for helpful conversations and significant comments on a previous version of this paper.
 I would like to thank Yuri Santos Rego for his support and for his comments on an earlier draft of this paper. 
 I am grateful to the anonymous referee for helpful comments.
 I also gratefully acknowledge financial support from the DAAD for this work.

\section*{Note added in proof} In a recent article~\cite{RoVo19}, Rossmann and Voll investigate zeta functions enumerating conjugacy classes of unipotent groups associated with graphs.
As an application of their results, they give alternative proofs of Corollary~\ref{kzf} for groups of type~$F$ and~$G$; cf. \cite[Examples 8.3(i) and~8.8(iii)]{RoVo19}. They also consider the bivariate conjugacy class zeta functions of a family of groups associated to so-called cographs. This class of groups contains all groups of type~$F$ and~\cite[Example~8.23]{RoVo19} provides an alternative proof of the first part of Theorem~\ref{cclvl}; cf. \cite[Example~8.23]{RoVo19}.

\def\cprime{$'$} \def\cprime{$'$}
 \providecommand{\bysame}{\leavevmode\hbox to3em{\hrulefill}\thinspace}
 \providecommand{\MR}{\relax\ifhmode\unskip\space\fi MR }

\printbibliography

\end{document}